\def\A{\mathcal{A}} 
\def\B{\mathcal{B}}
\newcommand{\aug}{\operatorname{Aug}}
\def\tepsilon{\tilde\epsilon}
\def\ta{\tilde{a}}
\def\tb{\tilde{b}}
\def\tc{\tilde{c}}
\def\tpartial{\tilde\partial}
\def\tv{\tilde{v}}
\newtheorem*{thmLinksInR}{Theorem \ref{thm:linksInR}}
\newtheorem*{thmLinksInSolidTorus}{Theorem \ref{thm:correspSolidTorus}}
\newtheorem*{corNonvanishing}{Corollary \ref{cor:nonvanishing}}
\newtheorem*{thmRulingPoly}{Theorem \ref{thm:rulingPoly}}
\def\clha{LHA(\Lambda)}
\def\clho{LHO(\Lambda)}
\def\ccheck{\widecheck{LHO^+}(\Lambda)}
\def\chat{\widehat{LHO^+}(\Lambda)}
\def\clhho{LH^{Ho}(\Lambda)}
\def\hlhho{L\mathbb{H}^{Ho}(\Lambda)}
\def\hsh{S\mathbb{H}(X)}
\def\integers{{\mathbb{Z}}}
\def\reals{{\mathbb R}} 
\def\rationals{{\mathbb Q}}
\def\ker{{\hbox{ker}}\,}
\def\other{\text{otherwise}}
\def\bigno{\bigskip\noindent}
\newcommand{\diag}[1]{\begin{center}\begin{minipage}{5in}\xymatrix{#1}\end{minipage}\end{center}}
\newcommand{\seq}[1]{\begin{center}\begin{minipage}{5in}\xymatrix@R=.125pc{#1}\end{minipage}\end{center}}
\def\arr{\ar[r]}
\def\mapstol{\ar@{|->}[l]} 
\def\mapstor{\ar@{|->}[r]}
\def\mapstod{\ar@{|->}[d]}
\def\mapstou{\ar@{|->}[u]}
\newtheorem{lem}{Lemma}[section]
\newtheorem{thm}[lem]{Theorem}
\newtheorem{cor}[lem]{Corollary}
\newtheorem{prop}[lem]{Proposition}
\theoremstyle{definition} 
\newtheorem{defn}[lem]{Definition}
\newtheorem{ex}[lem]{Example} 
\newtheorem{rmk}[lem]{Remark} 
\newtheorem{note}[lem]{Notation}
\newcolumntype{C}[1]{>{\centering\arraybackslash$}p{#1}<{$}}
\begin{document}
\title{Augmentations and Rulings of Legendrian Links in $\#^k(S^1\times S^2)$}
\author{C. Leverson}
\address{Duke University, Durham, NC 27708}
\email{cleverso@math.duke.edu}
\date{\today}
\maketitle

\begin{abstract}
  Given a Legendrian link in $\#^k(S^1\times S^2)$, we extend the
  definition of a normal ruling from $J^1(S^1)$ given by Lavrov and
  Rutherford and show that the existence of an augmentation to any
  field of the Chekanov-Eliashberg differential graded algebra over
  $\mathbb{Z}[t,t^{-1}]$ is equivalent to the existence of a normal
  ruling of the front diagram. For Legendrian knots, we also show that
  any even graded augmentation must send $t$ to $-1$. We use the
  correspondence to give nonvanishing results for the symplectic
  homology of certain Weinstein $4$-manifolds. We show a similar
  correspondence for the related case of Legendrian links in
  $J^1(S^1)$, the solid torus.
\end{abstract}

\section{Introduction}

Augmentations and normal rulings are important tools in the study of
Legendrian knot theory, especially in the study of Legendrian knots in
$\reals^3$. Here, augmentations are augmentations of the
Chekanov-Eliashberg differential graded algebra introduced by Chekanov
in \cite{Chekanov} and Eliashberg in
\cite{EliashbergInvariants}. Chekanov describes the noncommutative
differential graded algebra (DGA) over $\integers/2$ associated to a
Lagrangian diagram of a Legendrian link in
$(\reals^3,\xi_{\text{std}})$ combinatorially: The DGA is generated by
crossings of the link; the differential is determined by a count of
immersed polygons whose corners lie at crossings of the link and whose
edges lie on the link. This is called the Chekanov-Eliashberg DGA and
Chekanov showed that the homology of this DGA is invariant under
Legendrian isotopy. Etnyre, Ng, and Sabloff defined a lift of the
Chekanov-Eliashberg DGA to a DGA over $\integers[t,t^{-1}]$ in
\cite{EtnyreInvariants}. Following ideas introduced by Eliashberg in
\cite{EliashbergWave}, Fuchs \cite{FuchsAug} and Chekanov-Pushkar
\cite{ChekanovFronts} gave invariants of Legendrian knots in
$\reals^3$ using generating families, functions whose critical values
generate front diagrams of Legendrian knots, by decomposing the
generating families. These are generally called ``normal rulings.''

These two invariants are very closely related; Fuchs \cite{FuchsAug},
Fuchs-Ishkhanov \cite{FuchsIshkhanov}, and Sabloff \cite{SabloffAug}
showed that the existence of a normal ruling is equivalent to the
existence of an augmentation to $\integers/2$ of the
Chekanov-Eliashberg DGA $\A$ for Legendrian knots in $\reals^3$. Here,
given a unital ring $S$, an augmentation is a ring map $\epsilon:\A\to
S$ such that $\epsilon\circ\partial=0$ and $\epsilon(1)=1$. One of the
main results of \cite{Leverson} is that the equivalence remains true
when one looks at augmentations to a field of the lift of the
Chekanov-Eliashberg DGA from \cite{EtnyreInvariants} to the DGA over
$\integers[t^{\pm1}]$ for Legendrian knots in $\reals^3$. We extend
the result to Legendrian \emph{links} in $\reals^3$ to prove the main
result of this paper.

\begin{thm}\label{thm:linksInR}
  Let $\Lambda$ be an $s$-component Legendrian link in
  $\reals^3$. Given a field $F$, the Chekanov-Eliashberg DGA
  $(\A,\partial)$ over $\integers[t_1^{\pm1},\ldots,t_s^{\pm1}]$ has a
  $\rho$-graded augmentation $\epsilon:\A\to F$ if and only if a front
  diagram of $\Lambda$ has a $\rho$-graded normal ruling. Furthermore,
  if $\rho$ is even, then $\epsilon(t_1\cdots t_s)=(-1)^s$.
\end{thm}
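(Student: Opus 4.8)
The plan is to prove the two implications of the equivalence separately, in each direction building on the Legendrian-knot case of \cite{Leverson} (which itself rests on the $\integers/2$-coefficient results of Fuchs \cite{FuchsAug}, Fuchs--Ishkhanov \cite{FuchsIshkhanov}, and Sabloff \cite{SabloffAug}), and then to obtain the normalization of $\epsilon(t_1\cdots t_s)$ from a grading argument. Before beginning I would fix a convenient front diagram of $\Lambda$: put it in plat position and place exactly one base point on each component $\Lambda_i$, isotoped onto the strand just to the left of a chosen right cusp of $\Lambda_i$, so that $t_i$ enters the differential only through disks whose boundary sweeps past that base point.

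\emph{Rulings give augmentations.} Given a $\rho$-graded normal ruling $R$, I would pass to a dipped diagram (in the sense of Fuchs) on which $R$ sorts the crossings transparently into switches, returns, and departures, and define $\epsilon\colon\A\to F$ by sending each switch crossing to $1$ (with the appropriate sign), every other generator to $0$, and $\epsilon(t_i)=-1$ for all $i$. One then verifies $\epsilon\circ\partial=0$ generator by generator: $\partial$ of a crossing is a signed sum of monomials read from immersed disks in the diagram with corners at crossings, normality of $R$ forces the $\epsilon$-nonzero monomials to cancel in pairs, and when a disk passes a base point of $\Lambda_i$ its monomial acquires a factor $t_i^{\pm1}$, which under $\epsilon(t_i)=-1$ does not disturb the cancellation. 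This is the \cite{Leverson} construction run with the $s$ variables $t_1,\dots,t_s$ in place of a single $t$.

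\emph{Augmentations give rulings.} Conversely, given a $\rho$-graded augmentation $\epsilon$, I would introduce a dip, extend $\epsilon$ over the enlarged (dipped) DGA --- dipping is a stabilization-type modification compatible with the $t_i$ --- and then read off, at each vertical slice, a pairing of the strands from the extended augmentation; normality and the switch/return conditions for the resulting candidate ruling follow from $\epsilon\circ\partial=0$ just as in the knot case. Since dipping and the extraction of the ruling are local operations, and the Maslov potential, the grading of each $t_i$, and the sign rule are tracked per component, the \cite{Leverson} argument transfers with only bookkeeping changes. I expect this direction to be the main obstacle: the genuinely new point over the knot case is that the extension of $\epsilon$ across the dip must be arranged so that $\epsilon\circ\partial=0$ persists on all of the new generators simultaneously in the presence of the several factors $t_i^{\pm1}$, and --- depending on how the dip is organized --- one may need the $\rho$-even normalization below as an input here rather than only as a corollary.

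\emph{The value of $\epsilon(t_1\cdots t_s)$.} Suppose $\rho$ is even. Since $F$ lies in degree $0$ and $\epsilon$ preserves the $\integers/\rho$-grading, $\epsilon$ annihilates every generator whose degree is not a multiple of $\rho$; as $\rho$ is even this includes every odd-degree generator, hence every monomial that carries an odd-degree factor. Using the normalization near the base points, I would exhibit --- following the knot computation of \cite{Leverson} --- a relation in $\A$ involving the product $t_1\cdots t_s$ which, after applying the graded augmentation and discarding the monomials with an odd-degree factor, reduces to $\epsilon(t_1\cdots t_s)=(-1)^s$. This is precisely the step that uses evenness of $\rho$ --- for odd $\rho$ the parity of a generator is not defined --- and it pins down only the product, consistently with the knot statement ($s=1$), where it recovers $\epsilon(t)=-1$.
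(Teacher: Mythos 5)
Your overall strategy---plat position, dipped diagrams, and a reduction of both implications to the knot construction of \cite{Leverson}, plus a parity argument for the $t$-variables---is the same as the paper's, but two of your steps would fail as written. First, in the ruling-to-augmentation direction you cannot send ``every other generator to $0$'': the differential of each right-cusp crossing $q_k$ has a constant term $\pm1$ (or $\pm t_i^{\pm1}$), and the monomials that cancel it involve the dip generators on the strands paired by the ruling. Those generators must be augmented to the specific nonzero values recorded in Figure~\ref{fig:basepoints} (values such as $a$, $aa_1$, $a^{-1}a_1a_2^{-1}$, propagated along each ruling disk), not to $0$; with your assignment $\epsilon\circ\partial(q_k)=\pm1\neq0$. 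Relatedly, the normalization $\epsilon(t_i)=-1$ for each individual $i$ is not available for links. The construction naturally places one base point (augmented to $-1$) per right cusp, per switch, and per $-$(a) crossing, and Lemma~\ref{lem:oddNumBasepts} controls only the \emph{total} parity $c+sw+a_-\equiv n\pmod 2$, not a component-by-component refinement; after merging base points onto a single one per component you can only conclude $\epsilon(t_i)=(-1)^{s_i}$ with $\sum_i s_i\equiv n\pmod 2$, and some $s_i$ may be even. This is exactly why the theorem asserts only the product formula $\epsilon(t_1\cdots t_s)=(-1)^s$; decreeing $\epsilon(t_i)=-1$ for every $i$ would in general break $\epsilon\circ\partial=0$ at some right cusp.

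Second, the final statement is not deduced from ``a relation in $\A$ involving $t_1\cdots t_s$'' together with the vanishing of $\epsilon$ on odd-degree generators. The paper's argument passes through the associated normal ruling: for even $\rho$ every switched crossing has even degree, which forces paired strands to be oppositely oriented, so each ruling disk is an oriented unknot; an induction across the dipped diagram then gives $\prod\bigl(\epsilon(b^k_{ij})^{\pm1}\bigr)=1$ over paired strands in each slice, and evaluating $\epsilon$ on the differentials $\partial q_k$ of the right-cusp crossings yields $\epsilon(t_1\cdots t_s)=(-1)^s$ via Lemma~\ref{lem:oddNumBasepts}. You would need to supply this orientation/induction argument; the degree-counting observation alone does not produce the identity. (Your worry that the even-$\rho$ normalization might be needed as an input to the augmentation-to-ruling direction is unfounded: that direction goes through for all $\rho$ without it.)
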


The final statement tells us that for all even graded augmentations
$\epsilon:\A\to F$, $\epsilon(t_1\cdots t_s)=(-1)^s$. In particular,
if $\Lambda$ is a knot, then any even graded augmentation sends $t$ to
$-1$.

For $k\geq0$, an analogous correspondence can be shown for Legendrian
links in $\#^k(S^1\times S^2)$. A Legendrian link in $\#^k(S^1\times
S^2)$ with the standard contact structure is an embedding
$\Lambda:\coprod_s S^1\to\#^k(S^1\times S^2)$ which is everywhere
tangent to the contact planes. We will think of them as Gompf does in
\cite{GompfHandlebody}. For an example, see
Figure~\ref{fig:exResolution}. In this paper, we extend the definition
of normal ruling of a Legendrian link in $\reals^3$ to a Legendrian
link in $\#^k(S^1\times S^2)$. We can then define the ruling
polynomial for a Legendrian link in $\#^k(S^1\times S^2)$ and show
that the ruling polynomial is invariant under Legendrian isotopy.

\begin{thm}\label{thm:rulingPoly}
  The $\rho$-graded ruling polynomial $R^\rho_{(\Lambda,m)}$ with
  respect to the Maslov potential $m$ (which changes under Legendrian
  isotopy) is a Legendrian isotopy invariant.
\end{thm}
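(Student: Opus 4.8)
The plan is to reduce the claim to a finite check on a generating set of local front moves, in the spirit of Chekanov--Pushkar's proof of the analogous statement in $\reals^3$ and Lavrov--Rutherford's in $J^1(S^1)$. First I would recall that, presenting Legendrian links in $\#^k(S^1\times S^2)$ by Gompf front diagrams (following \cite{GompfHandlebody}), any Legendrian isotopy is realized by a finite sequence of elementary moves: the front incarnations of the Legendrian Reidemeister moves (triple-point moves and births/deaths of pairs of cusps) performed in the complement of the $1$-handles, together with a short list of moves supported in a neighborhood of a $1$-handle (passing a strand across a handle region, a handle-slide move, and a Reidemeister-II-type move that creates or cancels a pair of strands at a handle). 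Since $R^\rho_{(\Lambda,m)}$ is by construction a finite sum $\sum_\sigma z^{j(\sigma)}$ over the $\rho$-graded normal rulings $\sigma$ of a fixed front, weighted by the usual local statistic $j(\sigma)$ built from the numbers of switches and right cusps of $\sigma$, it suffices to exhibit, for each elementary move, a bijection between the sets of $\rho$-graded normal rulings of the two fronts---with respect to Maslov potentials that correspond to one another under the move---preserving $j$.

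For the moves away from the $1$-handles there is essentially nothing new to do: the local correspondence of normal rulings and the invariance of $j(\sigma)$ under each front Reidemeister move is precisely the $\reals^3$ computation, which I would quote. The only point to note is that the $\rho$-grading constraint transports because the local change of the Maslov potential is the standard one, and that the triple-point move is the case in which the grading must be tracked: one organizes the verification by the cyclic order and the Maslov potentials of the three strands meeting at the triple point, exactly as in \cite{ChekanovFronts, SabloffAug}. None of these moves interacts with the ambient topology.

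The main work, and the step I expect to be the principal obstacle, is the analysis of the moves near a $1$-handle. Here I would first make precise how a normal ruling, in the sense extended in this paper from Lavrov--Rutherford, interacts with a handle region: the ruling induces a pairing of the strands passing through the handle together with a prescribed ``model'' configuration there, and the normality condition must be stated so as to be local to the handle. Then, for each handle move, I would carry out a finite case analysis: list the possible local ruling configurations on the ``before'' side, and for each produce the unique compatible configuration on the ``after'' side, verifying (i) that the Maslov-potential and $\rho$-grading data transport correctly under the prescribed change of $m$, and (ii) that the net change in the numbers of switches and right cusps leaves $j(\sigma)$ fixed. The delicate case is the move that alters the cyclic order in which strands meet a handle, since it simultaneously introduces a crossing (hence a potential switch) and modifies cusp data; arranging that the bookkeeping of switches, cusps, and the shifted Maslov potential cancels exactly is where the argument must be done with care.

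Finally, composing the handle-local bijections with the Reidemeister-type ones yields, for any two Gompf front diagrams of Legendrian-isotopic links, a bijection of $\rho$-graded normal rulings preserving $j$, hence an equality $R^\rho_{(\Lambda,m)} = R^\rho_{(\Lambda',m')}$, where $m'$ is obtained from $m$ by the composite of the local changes of Maslov potential dictated by the moves. This is the asserted Legendrian isotopy invariance.
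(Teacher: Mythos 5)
Your proposal follows essentially the same route as the paper: reduce to Gompf's generating set of front moves (the Legendrian Reidemeister moves inside the tangle, handled by citing \cite{ChekanovFronts}, plus the three additional moves near the $1$-handles, which the paper calls Gompf moves 4, 5, and 6) and check invariance of the ruling count move by move. The paper's treatment of the handle moves is exactly the finite local analysis you outline — in particular, for the strand-creating move it observes that a ruling can never pair a new cusp strand with a strand already passing through the handle, forcing the two cusp strands to be paired with no new switches — so your plan is correct and matches the paper's argument.
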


In \cite{Ekholms1s2}, Ekholm and Ng extend the definition of the
Chekanov-Eliashberg DGA over $\integers[t,t^{-1}]$ to Legendrian links
in $\#^k(S^1\times S^2)$. The main result of this paper uses
Theorem~\ref{thm:linksInR} to extend the correspondence between normal
rulings and augmentations to a correspondence for Legendrian links in
$\#^k(S^1\times S^2)$.

\begin{thm}\label{thm:main}
  Let $\Lambda$ be an $s$-component Legendrian link in $\#^k(S^1\times
  S^2)$ for some $k\geq0$. Given a field $F$, the Chekanov-Eliashberg
  DGA $(\A(\Lambda),\partial)$ over
  $\integers[t_1^{\pm1},\ldots,t_s^{\pm1}]$ has a $\rho$-graded
  augmentation $\epsilon:\A(\Lambda)\to F$ if and only if a front
  diagram of $\Lambda$ has a $\rho$-graded normal ruling. Furthermore,
  if $\rho$ is even, then $\epsilon(t_1\cdots t_s)=(-1)^s$.
\end{thm}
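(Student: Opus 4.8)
The plan is to deduce Theorem~\ref{thm:main} from Theorem~\ref{thm:linksInR} by ``opening up'' the one-handles of $\#^k(S^1\times S^2)$, reducing the $k$-handle case to the case $k=0$ proved in $\reals^3$.

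First I would set up the two sides precisely. By Ekholm--Ng \cite{Ekholms1s2}, the DGA $(\A(\Lambda),\partial)$ over $\integers[t_1^{\pm1},\dots,t_s^{\pm1}]$ is generated by the crossings of a Lagrangian (or front) diagram of $\Lambda$ together with finitely many ``handle generators'' attached to the strands passing through each of the $k$ one-handles; $\partial$ counts immersed disks, including disks running through handles, and the base-point variables $t_i$ record intersections with marked points, which I place in the complement of the handle regions. On the combinatorial side, a $\rho$-graded normal ruling of a front of $\Lambda\subset\#^k(S^1\times S^2)$, as defined earlier in the paper (extending Lavrov--Rutherford), is a normal ruling of the usual kind on the front away from the handles, together with a matching condition relating the rulings of the strands as they enter and leave each handle.

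Next, for a diagram $D$ of $\Lambda$, I would build a diagram $D'$ of a Legendrian link $\Lambda'\subset\reals^3$ with the same number of components by excising each handle and splicing in a fixed Legendrian tangle $T_p$ --- essentially a complete set of crossings among the $n_p$ strands through the $p$-th handle, arranged at the appropriate heights --- chosen so that the crossings of $T_p$ carry exactly the handle generators and the disks through $T_p$ reproduce the handle part of the differential. The two things to check are then: (i) $\A(\Lambda)$ has a $\rho$-graded augmentation to $F$ if and only if $\A(\Lambda')$ does, and one may take these augmentations to agree on $t_1,\dots,t_s$; and (ii) the $\rho$-graded normal rulings of $D$ are in bijection with those of $D'$, the bijection being the identity away from the handles and identifying the matching data at the $p$-th handle with the ruling of $T_p$. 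Granting (i) and (ii), Theorem~\ref{thm:linksInR} applied to $\Lambda'$ yields the augmentation--ruling equivalence for $\Lambda$, and when $\rho$ is even it gives $\epsilon(t_1\cdots t_s)=(-1)^s$, since the base-point variables are carried across unchanged. (Equivalently, one may open one handle at a time and induct on $k$, the base case $k=0$ being Theorem~\ref{thm:linksInR}.)

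I expect the main obstacle to be the local analysis behind (i) and (ii): a single local model $T_p$ must be simultaneously correct for the handle part of $\partial$ \emph{and} its augmentations \emph{and} the normality/matching conditions on rulings, and one must check that the resulting correspondence on generators extends to a DGA map so that $\epsilon\circ\partial=0$ is genuinely preserved. Disks that pass through several handles, and their interaction with the $\rho$-grading and the (isotopy-dependent, cf.\ Theorem~\ref{thm:rulingPoly}) Maslov potential, are where the bookkeeping is delicate; the handle terms are the only genuinely new feature compared with the $\reals^3$ argument. If a clean choice of $T_p$ is unavailable, the fallback is to transcribe the proof of Theorem~\ref{thm:linksInR} directly --- building an augmentation from a ruling by setting it to $1$ at switches and to prescribed values on handle generators and checking $\epsilon\circ\partial=0$ disk-by-disk, extracting a ruling from an augmentation via the induced linearized data, and deducing the $t_i$ statement from the parity analysis of even $\rho$-graded rulings as in the $\reals^3$ case.
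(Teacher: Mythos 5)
Your high-level strategy --- replace each $1$-handle by a Legendrian tangle in $\reals^3$ and invoke Theorem~\ref{thm:linksInR} --- is the same as the paper's, which satellites each handle around a twice-stabilized unknot to form $S(\Lambda)\subset\reals^3$. But the details you defer are exactly where the content lies, and as stated several of your intermediate claims fail. First, the choice of $T_p$: the internal DGA of a handle is \emph{infinitely} generated, by $c^p_{ij}$ for all $p\geq 1$, and $\partial c^1_{ij}$ contains the constant term $\delta_{ij}$. A ``complete set of crossings among the $n_p$ strands at appropriate heights'' produces only generators playing the role of $c^0_{ij}$, $i<j$; it has no self-crossings $d_{jj}$ and no constant terms in its differential, so it cannot reproduce the handle part of $\partial$. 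The satellite works precisely because wrapping the strands around a stabilized unknot creates the triangular lattices $d_{ji},f_{ji},h_{ji}$ ($i\leq j$) with $\partial d_{jj}=1+\cdots$, matching $\partial c^1_{jj}$. Second, because $\A(\Lambda)$ is infinitely generated and $\A(S(\Lambda))$ is finitely generated, your (i) is not an ``if and only if'' and your (ii) is not a bijection in any direct sense: an augmentation of $\A(S(\Lambda))$ does not obviously extend over the generators $c^p_{ij}$, $p\geq 2$, and only the \emph{thin} rulings of $S(\Lambda)$ correspond to rulings of $\Lambda$. The paper therefore routes the two directions differently: an augmentation of $\A(\Lambda)$ is pushed forward to $\A(S(\Lambda))$ by an explicit sign-by-sign formula (checked against $\partial d,\partial e,\partial f,\dots$), the resulting ruling of $S(\Lambda)$ is shown to be automatically thin and is restricted to $\Lambda$; in the reverse direction the augmentation is built \emph{directly} on the dipped diagram of $\Lambda$ from the ruling, with $\epsilon(c^0_{ij})=1$ on paired strands, $\epsilon(c^1_{ij})=(-1)^{\lvert c^1_{ij}\rvert}$ for $i>j$ paired, $0$ on all $c^p_{ij}$ with $p\geq 2$, and $\epsilon\partial c^p_{ij}=0$ verified for every $p$.

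Third, the assertion that $\epsilon(t_1\cdots t_s)=(-1)^s$ follows ``since the base-point variables are carried across unchanged'' is a genuine gap. The splicing introduces extra right cusps and forces extra switched crossings in the lattices (six per paired pair of strands, in configuration $+$(a)), each carrying a base point augmented to $-1$; the paper computes the ratio of $\epsilon(t_1\cdots t_s)$ for $S(\Lambda)$ to that for $\Lambda$ as $\prod_{p}(-1)^{3N_p}\cdot\prod(-1)^{6}$ and needs Corollary~\ref{cor:oddNumStrands} (each $N_p$ is even whenever an augmentation exists) to conclude the ratio is $1$. Without that bookkeeping the sign statement does not transfer. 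Your fallback --- transcribing the proof of Theorem~\ref{thm:linksInR} disk-by-disk --- is essentially what the paper does for the ruling-to-augmentation direction, but for the augmentation-to-ruling direction some version of the satellite (with the correct local model) appears unavoidable.
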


\noindent
Notice that one can consider Legendrian links in $\reals^3$ as being
Legendrian links in $\#^0(S^1\times S^2)$. In this way, this result is
a generalization of the correspondence in \cite{Leverson} and
Theorem~\ref{thm:linksInR}.

Along with the work of Bourgeois, Ekholm, and Eliashberg in
\cite{BEE}, Theorem~\ref{thm:main} gives nonvanishing results for
Weinstein (Stein) $4$-manifolds. In particular:

\begin{cor}\label{cor:nonvanishing}
  If $X$ is the Weinstein $4$-manifold that results from attaching
  $2$-handles along a Legendrian link $\Lambda$ to $\#^k(S^1\times
  S^2)$ and $\Lambda$ has a graded normal ruling, then the full symplectic
  homology $\hsh$ is nonzero.
\end{cor}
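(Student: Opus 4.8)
The plan is to chain together Theorem~\ref{thm:main} with the Legendrian surgery formula of Bourgeois--Ekholm--Eliashberg. The starting point is purely algebraic: applying Theorem~\ref{thm:main} with $\rho=0$, the hypothesis that $\Lambda$ admits a graded normal ruling of a front diagram is equivalent to the existence of a graded augmentation $\epsilon\colon\A(\Lambda)\to F$ of the Chekanov--Eliashberg DGA over $\integers[t_1^{\pm1},\ldots,t_s^{\pm1}]$ for some field $F$. In fact, applying the theorem with $F=\integers/2$ shows that a graded normal ruling also produces an augmentation to $\integers/2$, which is the convenient ground field if one wants to quote the version of the surgery formula stated over $\integers/2$.

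Next I would invoke \cite{BEE}. Here $\#^k(S^1\times S^2)=\partial X_0$ with $X_0=\natural^k(S^1\times D^3)$ a subcritical Weinstein $4$-manifold, so $X$ is obtained from $X_0$ by attaching Weinstein $2$-handles along the Legendrian link $\Lambda\subset\partial X_0$. The surgery formula then computes $\hsh$, up to an overall grading shift and after passing to the appropriate coefficient ring (tracking the loop variables coming from the $1$-handles and the variables $t_i$), as a Hochschild-type homology of the pair $(\A(\Lambda),\partial)$; more precisely it sits in an exact triangle together with $SH(X_0)$, and since $X_0$ is subcritical its symplectic homology vanishes, so the triangle collapses to an identification of $\hsh$ with this Hochschild-type homology of $(\A(\Lambda),\partial)$.

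It then remains to show that this homology is nonzero, and this is exactly where the augmentation is used. Because $\epsilon$ is a unital ring homomorphism to a commutative ring with $\epsilon\circ\partial=0$, it kills both the image of $\partial$ and all commutators, and hence descends to a nonzero $F$-linear functional on the degree-$0$ Hochschild-type group; in particular the latter is nonzero. Equivalently: after the change of generators that cancels the constant terms of $\partial$ (available precisely because an augmentation exists), the unit $1$ is a cycle of word length $0$ which is not a boundary, so its class is nonzero. Pushing this class through the surgery identification of the previous step gives $\hsh\neq0$, completing the argument.

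I expect the main obstacle to be the bookkeeping in the second step rather than any new idea: one must check that the statement of \cite{BEE} being quoted genuinely applies to Legendrian \emph{links} and to $X_0$ with boundary $\#^k(S^1\times S^2)$ (not just to knots in $S^3$), that the grading conventions match (this is where one uses that the augmentation is graded rather than merely $\integers/2$-graded), that the coefficient ring handling the $t_i$ is correct, and that the distinguished nonzero class really survives into the \emph{full} symplectic homology $\hsh$ rather than being visible only in a positive or equivariant variant. Once the setup of \cite{BEE} is aligned with the present conventions, the chain ``graded normal ruling $\Rightarrow$ graded augmentation $\Rightarrow$ nonzero Hochschild-type homology $\Rightarrow$ $\hsh\neq0$'' goes through directly.
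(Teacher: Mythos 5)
Your proposal follows essentially the same route as the paper: Theorem~\ref{thm:main} produces a graded augmentation, the Bourgeois--Ekholm--Eliashberg surgery formula identifies $\hsh$ with a Hochschild-type homology $\hlhho$ of the Chekanov--Eliashberg DGA, and the augmentation descends to a nonzero functional on that complex (with the unit class surviving), forcing nonvanishing. The only adjustment is that the paper quotes the BEE identification over $\rationals$, so you should take $F=\rationals$ rather than $\integers/2$.
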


\noindent
This follows from Theorem~\ref{thm:main} as the existence of a normal
ruling implies the existence of an augmentation to $\rationals$,
which, by \cite{BEE}, is necessary for the full symplectic homology to
be nonzero.

We show a correspondence for Legendrian links in the $1$-jet space of
the circle $J^1(S^1)$. In \cite{NgSolidTorus}, Ng and Traynor extend
the definition of the Chekanov-Eliashberg DGA to Legendrian links in
$J^1(S^1)$.  In \cite{LavrovSolidTorus}, Lavrov and Rutherford extend
the definition of normal ruling to a ``generalized normal ruling'' of
Legendrian links in $J^1(S^1)$ and show that the existence of a
generalized normal ruling is equivalent to the existence of an
augmentation to $\integers/2$ of the Chekanov-Eliashberg DGA of a
Legendrian link in $J^1(S^1)$. In \S\ref{sec:solidTorus}, we show that
this correspondence holds for augmentations to any field of the
Chekanov-Eliashberg DGA over
$\integers[t_1^{\pm1},\ldots,t_s^{\pm1}]$.

\begin{thm}\label{thm:correspSolidTorus}
  Let $\Lambda$ be a Legendrian link in $J^1(S^1)$. Given a field $F$,
  the Chekanov-Eliashberg DGA $(\A,\partial)$ over
  $\integers[t_1^{\pm1},\ldots,t_s^{\pm1}]$ has a $\rho$-graded
  augmentation $\epsilon:\A\to F$ if and only if a front diagram of
  $\Lambda$ has a $\rho$-graded generalized normal ruling.
\end{thm}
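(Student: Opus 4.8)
The approach is to mirror the strategy used for Theorems~\ref{thm:linksInR} and~\ref{thm:main}: reduce the statement for $J^1(S^1)$ to the already-established correspondence over $\integers/2$ due to Lavrov and Rutherford, together with the techniques of \cite{Leverson} for tracking the coefficient variables $t_i$. The key observation is that a Legendrian link $\Lambda$ in $J^1(S^1)$ can be cut open along a page of the open book to yield a Legendrian tangle, and its front diagram differs from the $\reals^3$ setting only in the presence of the ``wrapping'' region where strands pass from the right edge of the diagram back to the left. So the first step is to set up this description carefully, recalling from \cite{NgSolidTorus} and \cite{LavrovSolidTorus} the combinatorial front diagram, the generators of $(\A,\partial)$ over $\integers[t_1^{\pm1},\ldots,t_s^{\pm1}]$, and the notion of a $\rho$-graded generalized normal ruling, which allows ``switches'' and certain returns/departures at the cut.

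**Forward direction (ruling $\Rightarrow$ augmentation).** Given a $\rho$-graded generalized normal ruling, I would construct an augmentation $\epsilon:\A\to F$ explicitly. Following \cite{Leverson, SabloffAug}, one sets $\epsilon$ to be nonzero precisely on those degree-$0$ crossings that are switches of the ruling, and assigns to each such generator a value in $F^\times$ (typically $1$, or a sign dictated by the Maslov potential). The new feature compared to the $\reals^3$ case is bookkeeping for the $t_i$: each time the $i$-th component crosses the cut, the differential picks up a factor of $t_i^{\pm1}$, so one must choose the values $\epsilon(t_i)$ compatibly. I expect that with the right choice (again, a sign depending on parity of $\rho$), the relation $\epsilon\circ\partial=0$ holds term-by-term, exactly as the polygon/disk count decomposes according to the ruling. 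This is essentially a local check at each crossing plus a global consistency check across the cut.

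**Reverse direction (augmentation $\Rightarrow$ ruling).** Given $\epsilon:\A\to F$, I would first apply an algebraic change of coordinates on $\A$ to normalize $\epsilon$ so that it becomes a count-preserving ``$0/1$'' augmentation, then show its support defines the switches of a generalized normal ruling. The cleanest route is to invoke the $\integers/2$ correspondence of \cite{LavrovSolidTorus}: the reduction $\A\otimes\integers/2$ inherits an augmentation $\bar\epsilon$ from $\epsilon$ (sending each $t_i\mapsto 1$), which yields a generalized normal ruling by their theorem; the content is then to verify that gradedness of $\epsilon$ over $F$ forces the ruling to be $\rho$-graded and, conversely, that no information is lost in the reduction. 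The techniques of \cite{Leverson} handle exactly this passage between an arbitrary field $F$ and $\integers/2$, using that a graded augmentation over any field can be pushed to one over the prime subfield or $\integers/2$ after a suitable algebraic manipulation.

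**Main obstacle.** The hard part will be the careful treatment of the coefficients $t_i$ across the cut in the wrapping region, and establishing the analogue of the final ``$\epsilon(t_1\cdots t_s)=(-1)^s$'' phenomenon (or its appropriate modification) in the $J^1(S^1)$ setting — the signs coming from the Maslov potential in $S^1\times\reals$ interact with the non-simply-connected base, so the orientation and grading conventions must be pinned down precisely before the local disk-count identities can be checked. Once the combinatorial front model, the sign conventions, and the $t_i$-bookkeeping are fixed, both directions should follow by adapting the arguments of \cite{Leverson} and \cite{LavrovSolidTorus} with only routine modifications.
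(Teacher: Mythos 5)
Your forward direction is broadly in the spirit of the paper's: the paper works with the dipped diagram and the matrix formulation of Ng--Traynor, augments switched crossings to $1$, augments the dip generators $c^m_{rs}$ to $1$ exactly when $\sigma_m(r)=s$ (``Property (R)''), places base points augmented to $-1$ at right cusps, and checks the wrap-around consistency via $\epsilon(B_0)=\epsilon(B_M)$, $\epsilon(C_0)=\epsilon(C_M)$. That matches your ``local check plus global consistency across the cut,'' though note one expectation of yours is off: there is no analogue of $\epsilon(t_1\cdots t_s)=(-1)^s$ here --- the theorem statement deliberately omits it, and in fact the paper proves that for a link with a \emph{strictly} generalized normal ruling, $\epsilon(t_1\cdots t_s)$ can be made equal to any nonzero element of $F$.

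The reverse direction of your proposal has a genuine gap. You propose to reduce mod $2$: ``the reduction $\A\otimes\integers/2$ inherits an augmentation $\bar\epsilon$ from $\epsilon$ (sending each $t_i\mapsto 1$).'' This does not work: an augmentation is a ring map $\epsilon:\A\to F$, and for a general field $F$ (e.g.\ $\rationals$, $\complexes$, or $\integers/p$ with $p$ odd) there is no ring homomorphism $F\to\integers/2$, so $\epsilon$ induces no augmentation of $\A\otimes\integers/2$. The claim that ``a graded augmentation over any field can be pushed to one over the prime subfield or $\integers/2$ after a suitable algebraic manipulation'' is false, and indeed the equivalence of augmentability over different fields is a \emph{consequence} of the theorem, not an input to it; assuming it makes the argument circular. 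The paper instead works directly over $F$: since $\epsilon\partial C_m=(\Sigma\epsilon(C_m))^2=0$, each $\Sigma\epsilon(C_m)$ is a strictly upper triangular square-zero matrix over $F$, and Barannikov's canonical form for $M$-complexes (Proposition~\ref{prop:involution}) assigns to it a \emph{unique} involution $\sigma_m=\tau(\Sigma\epsilon(C_m))$, invariant under upper-triangular conjugation. Proposition~\ref{prop:mcomplexRelation} then controls how $\tau$ changes when two adjacent basis vectors are transposed, and this is exactly what forces the sequence $(\sigma_1,\ldots,\sigma_M)$ to satisfy the switch and normality conditions of a generalized normal ruling; the $\rho$-gradedness follows because the change of basis can be taken degree-preserving. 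This linear-algebra-over-$F$ step is the essential new content your proposal is missing.
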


\subsection{Outline of the article}
In \S\ref{sec:background} we recall background on Legendrian links in
$\#^k(S^1\times S^2)$ and $\reals^3$. We give definitions of the
Chekanov-Eliashberg DGA over $\integers[t,t^{-1}]$, with sign
conventions, and augmentations of the DGA in both $\#^k(S^1\times
S^2)$ and $\reals^3$. We also define normal rulings for links in
$\#^k(S^1\times S^2)$ and show that the ruling polynomial is invariant
under Legendrian isotopy. In \S\ref{sec:correspLinks}, we prove
Theorem~\ref{thm:linksInR}. In \S\ref{sec:augToRuling}, given an
augmentation, we construct a normal ruling proving one direction of
Theorem~\ref{thm:main}. In \S\ref{sec:rulingToAug}, given a normal
ruling, we construct an augmentation, finishing the proof of
Theorem~\ref{thm:main}. In \S\ref{sec:solidTorus}, we prove
Theorem~\ref{thm:correspSolidTorus}. In the Appendix, we give the
nonvanishing symplectic homology result.

\subsection{Acknowledgements}
The author thanks Lenhard Ng and Dan Rutherford for many helpful
discussions. This work was partially supported by NSF grants
DMS-0846346 and DMS-1406371.

\section{Background Material}\label{sec:background}
\subsection{Legendrian Links in $\#^k(S^1\times S^2)$}
In this section we will briefly discuss necessary concepts of
Legendrian links in $\#^k(S^1\times S^2)$. We will follow the notation
in \cite{Ekholms1s2}.

\begin{defn}
  Let $A,M>0$. A tangle in $[0,A]\times[-M,M]\times[-M,M]$ is {\bf
    Legendrian} if it is everywhere tangent to the standard contact
  structure $dz-ydx$. Informally, a Legendrian tangle $T$ in
  $[0,A]\times[-M,M]\times[-M,M]$ is in {\bf normal form} if
  \begin{itemize}
  \item $T$ meets $x=0$ and $x=A$ in $k$ groups of strands, where the
    groups are of size $N_1,\ldots,N_k$, from top to bottom in both
    the $xy$ and $xz$ projections,
  \item and within the $\ell$-th group, we label the strands by
    $1,\ldots,N_\ell$ from top to bottom at $x=0$ in both the $xy$ and
    $xz$ projections and $x=A$ in the $xz$ projection, and from bottom
    to top at $x=A$ in the $xy$ projection.
  \end{itemize}
\end{defn}

Every Legendrian tangle in normal form gives a Legendrian link in
$\#^k(S^1\times S^2)$ by attaching $k$ $1$-handles which join parts of
the $xz$ projection of the tangle at $x=0$ to the parts at $x=A$.  In
particular, the $\ell$-th $1$-handle joins the $\ell$-th group at
$x=0$ to the $\ell$-th group at $x=A$ and connects the strands in this
group with the same label at $x=0$ and $x=A$ through the
$1$-handle. See Figure~\ref{fig:exResolution}.

Every Legendrian link in $\#^k(S^1\times S^2)$ has an $xz$-diagram of
the form given by Gompf in \cite{GompfHandlebody}, which we will call
{\bf Gompf standard form}. The left diagram of
Figure~\ref{fig:exResolution} is an example of a link in Gompf
standard form. Any link in Gompf standard form can be isotoped to a
link whose $xy$-projection is obtained from the $xz$-diagram by {\bf
  resolution}. The resolution of an $xz$-diagram of a link is obtained
by the replacements given in Figure~\ref{fig:resolutions}. For an
example, see Figure~\ref{fig:exResolution}. By \cite{Ekholms1s2}, an
$xy$-diagram obtained by the resolution of an $xz$-diagram of a link
in Gompf standard form is in normal form. Thus, we can assume that the
$xy$-diagram of any Legendrian link is in normal form.

\begin{figure}
  \labellist
  \small
  \pinlabel $1$ [r] at 91 96
  \pinlabel $2$ [r] at 91 73
  \pinlabel $3$ [r] at 91 50
  \pinlabel $4$ [r] at 91 27

  \pinlabel $4$ [B] at 538 83
  \pinlabel $3$ [B] at 538 63
  \pinlabel $2$ at 538 43
  \pinlabel $1$ [t] at 538 26

  \pinlabel $b_{34}$ [b] at 407 34
  \pinlabel $b_{24}$ [b] at 438 43
  \pinlabel $b_{14}$ [b] at 471 53
  \pinlabel $b_{23}$ [b] at 462 33
  \pinlabel $b_{13}$ [b] at 490 44
  \pinlabel $b_{12}$ [b] at 512 34

  \endlabellist
  \includegraphics[width=.9\textwidth]{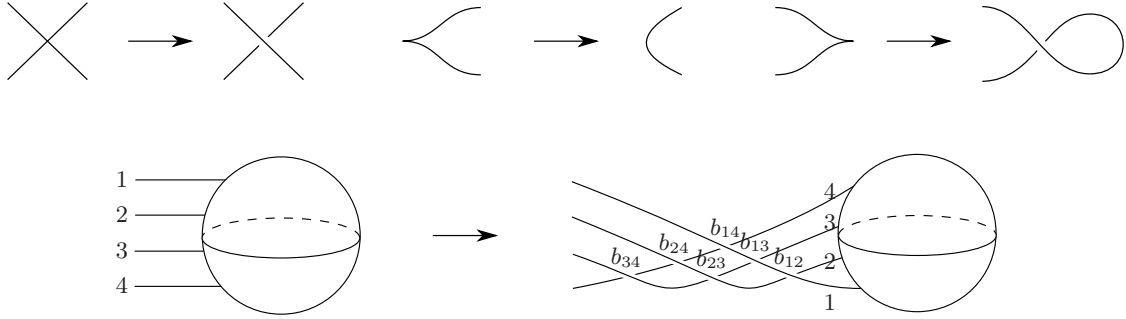}
  \caption{Resolutions of an $xz$-diagram in Gompf standard form.}
  \label{fig:resolutions}
\end{figure}

\begin{figure}
  \labellist
  \small
  \pinlabel $1$ at 165 280
  \pinlabel $2$ at 165 258
  \pinlabel $3$ at 165 235
  \pinlabel $4$ at 165 212

  \pinlabel $1'$ at 165 123
  \pinlabel $2'$ at 165 92

  \pinlabel $1$ at 724 280
  \pinlabel $2$ at 724 258
  \pinlabel $3$ at 724 235
  \pinlabel $4$ at 724 212

  \pinlabel $1'$ at 724 123
  \pinlabel $2'$ at 724 92

  \pinlabel $1$ at 1200 383
  \pinlabel $2$ at 1200 346
  \pinlabel $3$ at 1200 307 
  \pinlabel $4$ at 1200 270

  \pinlabel $1'$ at 1200 117
  \pinlabel $2'$ at 1200 80

  \pinlabel $4$ at 2015 353
  \pinlabel $3$ at 2015 325
  \pinlabel $2$ at 2015 295
  \pinlabel $1$ at 2015 273

  \pinlabel $2'$ [b] at 1853 107
  \pinlabel $1'$ [t] at 1853 84
  \endlabellist
  \includegraphics[width=\textwidth]{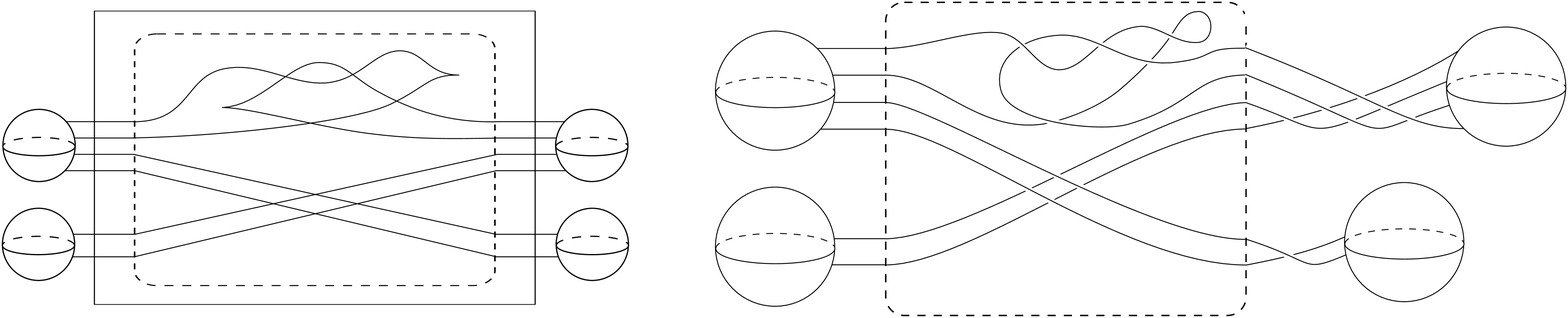}
  \caption{The left gives a Legendrian $xz$-diagram of a link in
    $\#^2(S^1\times S^2)$ in Gompf standard form. The right gives the
    resolution of the Legendrian link to an $xy$-diagram of a
    Legendrian isotopic link.}
  \label{fig:exResolution}
\end{figure}

\subsection{Definition of the DGA and augmentations in $\#^k(S^1\times
  S^2)$} \label{sec:defnDGA}

This section contains an overview of the differential graded algebra
over $\integers[t^{\pm1}_1,\ldots,t^{\pm1}_s]$ presented by Ekholm, Ng
in \cite{Ekholms1s2}. Let $\Lambda=\Lambda_1\coprod\cdots\coprod\Lambda_n$
be a Legendrian link in $\#^k(S^1\times S^2)$, where the $\Lambda_i$
denote the components of $\Lambda$ and $n\leq s$. Let $N_i\geq1$ be
the number of strands of $\Lambda$ which go through the $i$-th
$1$-handle with $N=\sum N_i$ the total number of strands at $x=0$.

\subsection{Internal DGA}
\label{sec:internal}
We will define the internal DGA for a Legendrian link in $S^1\times
S^2$, but one can easily extend the definition to the internal DGA for
a Legendrian link in $\#^k(S^1\times S^2)$ by defining the internal
DGA as follows for each $1$-handle separately.

Let $(r_1,\ldots,r_n)\in\integers^n$ be the $n$-tuple where $r_i$ is
the rotation number of the $i$-th component $\Lambda_i$ and let
$(m(1),\ldots,m(N))\in\integers^N$ be the $N$-tuple of a choice of
Maslov potential for each strand passing through the $1$-handle (see
\S\ref{sec:grading}).

Let $(\A_N,\partial_N)$ denote the DGA defined as follows. Let
$\A$ be the tensor algebra over
$R=\integers[t^{\pm1}_1,\ldots,t^{\pm1}_s]$ generated by $c^0_{ij}$
for $1\leq i<j\leq N$ and $c^p_{ij}$ for $1\leq i,j\leq N$ and
$p\geq1$. Set $\lvert t_i\rvert=-2r_i$, $\lvert t_i^{-1}\rvert=2r_i$,
and
\[\lvert c^p_{ij}\rvert=2p-1+m(i)-m(j)\]
for all $i,j,p$. Define the differential $\partial_N$ on the
generators by
\begin{align*}
  \partial_N(c^0_{ij})&=\sum_{\ell=i+1}^{j-1}(-1)^{\lvert c^0_{i\ell}\rvert+1}c^0_{i\ell}c^0_{\ell j}\\
  \partial_N(c^1_{ij})&=\delta_{ij}+\sum_{\ell=i+1}^N(-1)^{\vert c^0_{i\ell}\rvert+1}c^0_{i\ell}c^1_{\ell j}+\sum_{\ell=1}^{j-1}(-1)^{\lvert c^1_{i\ell}\rvert+1}c^1_{i\ell}c^0_{\ell j}\\
  \partial_N(c^p_{ij})&=\sum_{\ell=0}^p\sum_{m=1}^N(-1)^{\lvert
    c^\ell_{im}\rvert+1}c^\ell_{im}c^{p-\ell}_{mj}
\end{align*}
where $p\geq2$, $\delta_{ij}$ is the Kronecker delta function, and we
set $c^0_{ij}=0$ for $i\geq j$. Extend $\partial_N$ to $\A_N$ by the
Leibniz rule
\[\partial_N(xy)=(\partial_N(x))y+(-1)^{\lvert
  x\rvert}x(\partial_Ny).\] From \cite{Ekholms1s2}, we know
$\partial_N$ has degree $-1$, $\partial_N^2=0$, and
$(\A_N,\partial_N)$ is infinitely generated as an algebra, but is a
filtered DGA, where $c^p_{ij}$ is a generator of the $\ell$-th
component of the filtration if $p\leq\ell$.

Given a Legendrian link $\Lambda\subset\#^k(S^1\times S^2)$, we can
associate a DGA $(\A_{N_i},\partial_{N_i})$ to each of the
$1$-handles. We then call the DGA generated by the collection of
generators of $\A_i$ for $1\leq i\leq k$ with differential induced
by $\partial_{N_i}$, the {\bf internal DGA} of $\Lambda$.

\subsection{Algebra}
Suppose we have a Legendrian link
$\Lambda=\Lambda_1\coprod\cdots\coprod\Lambda_n\subset\#^k(S^1\times S^2)$
in normal form with exactly one point labeled $*_i$ within the tangle
(away from crossings) on each link component $\Lambda_i$ of $\Lambda$
(corresponding to $t_i$). We will discuss the case where there is more
than one base point on a given component in
\S\ref{sec:basePointChanges}.

\begin{note}
  Let $\ta_1,\ldots,\ta_m$ denote the crossings of the tangle diagram
  in normal form. Label the $k$ $1$-handles in the diagram by
  $1,\ldots,k$ from top to bottom. Recall that $N_i$ denotes the
  number of strands of the tangle going through the $i$-th
  $1$-handle. For each $i$, label the strands going through the $i$-th
  $1$-handle on the left side of the diagram $1,\ldots,N_i$ from top
  to bottom and from bottom to top on the right side, as in
  Figure~\ref{fig:exResolution}.
\end{note}

Let $\A(\Lambda)$ be the tensor algebra over
$R=\integers[t^{\pm1}_1,\ldots,t^{\pm1}_s]$ generated by
\begin{itemize}
\item$\ta_1,\ldots,\ta_m$;
\item$c^0_{ij;\ell}$ for $1\leq\ell\leq k$ and $1\leq i<j\leq N_\ell$;
\item$c^p_{ij;\ell}$ for $1\leq\ell\leq k$, $p\geq1$, and $1\leq
  i,j\leq N_\ell$.
\end{itemize}
(In general, we will drop the index $\ell$ when the $1$-handle is
clear.)

\subsection{Grading}
\label{sec:grading}

The following are a few preliminary definitions which will allow us to
define the grading on the generators of $\A(\Lambda)$.

\begin{defn}
  A {\bf path} in $\pi_{xy}(\Lambda)$ is a path that traverses part
  (or all) of $\pi_{xy}(\Lambda)$ which is connected except for where
  it enters a $1$-handle, meaning, where it approaches $x=0$
  (respectively $x=A$) along a labeled strand and exits the $1$-handle
  along the strand with the same label from $x=A$ (respectively
  $x=0$). Note that the tangent vector in $\reals^2$ to the path
  varies continuously as we traverse a path as the strands entering
  and exiting $1$-handles are horizontal.

  The {\bf rotation number} $r(\gamma)$ of a path $\gamma$ is the
  number of counterclockwise revolutions around $S^1$ made by the
  tangent vector $\gamma'(t)/\lvert\gamma'(t)\rvert$ to $\gamma$ as we
  transverse $\gamma$. Generally this will be a real number, but will
  be an integer if and only if $\gamma$ is smooth and closed.
\end{defn}

Thus, the rotation number $r_i=r(\Lambda_i)$ is the rotation number of
the path in $\pi_{xy}(\Lambda)$ which begins at the base point $*_i$
on the link component $\Lambda_i$ and traverses the link component,
following the orientation of the component. In the case where
$\Lambda$ is a link with components $\Lambda_1,\ldots,\Lambda_n$, we
define
\[r(\Lambda)=\gcd(r_1,\ldots,r_n).\] Define
\[\lvert t_i\rvert=-2r(\Lambda_i).\]

If $\pi_{xy}(\Lambda)$ is the resolution of an $xz$-diagram of an
$n$-component link in Gompf standard form, then the method assigning
gradings follows: Choose a {\bf Maslov potential} $m$ that associates
an integer modulo $2r(\Lambda)$ to each strand in the tangle $T$
associated to $\Lambda$, minus cusps and base points, such that the
following conditions hold:
\begin{enumerate}
\item for all $1\leq\ell\leq k$ and all $1\leq i\leq N_\ell$, the
  strand labeled $i$ going through the $\ell$-th $1$-handle at $x=0$
  and the $x=A$ must have the same Maslov potential;
\item if a strand is oriented to the right, meaning it enters the
  $1$-handle at $x=A$ and exits at $x=0$, then the Maslov potential of
  the strand must be even. Otherwise the Maslov potential of the
  strand must be odd;
\item at a cusp, the upper strand (strand with higher $z$-coordinate)
  has Maslov potential one more than the lower strand.
\end{enumerate}
The Maslov potential is well-defined up to an overall shift by an even
integer for knots. (In \cite{Ekholms1s2}, Ekholm and Ng give another
method for defining the gradings using the rotation numbers of
specified paths.)

Set $\lvert t_i\rvert=-2r(\Lambda_i)$ and $\lvert
c^p_{ij;\ell}\rvert=2p-1+m(i)-m(j)$, where $m(i)$ means the Maslov
potential of the strand with label $i$ going through the $\ell$-th
$1$-handle. It remains to define the grading on crossings in the
tangle, crossings resulting from resolving right cusps, and crossings
from the half-twists in the resolution. If $a$ is crossing of tangle
$T$, then let \[\lvert a\rvert=m(S_o)-m(S_u),\] where $S_o$ is the
strand which crosses over the strand $S_u$ at $a$ in the
$xy$-projection of $\Lambda$. If $a$ is a right cusp, define $\lvert
a\rvert=1$ (assuming there is not a base point in the loop). If $a$ is
a crossing in one of the half-twists in the resolution where strand
$i$ crosses over strand $j$ ($i<j$), then
\[\lvert a\rvert=m(i)-m(j).\]

\subsection{Differential}
It suffices to define the differential $\partial$ on generators and
extend by the Leibniz rule. Define
$\partial(\integers[t^{\pm1}_1,\ldots,t^{\pm1}_s])=0$. Set
$\partial=\partial_{N_{\ell}}$ on $\A_{N_{\ell}}$ as in
\S\ref{sec:internal}.

In \cite{Ekholms1s2}, the DGA on crossings $a_i$ is defined by looking
for immersed disks in the $xy$-diagrams of Legendrian links, (see the
left diagram in Figure \ref{fig:linkDip}). However, Ekholm and Ng note
that it is equivalent to look for immersed disks in dip versions of
the diagram, (see the right diagram in Figure~\ref{fig:linkDip}). See
Figure~\ref{fig:dipLabels} for the labeling of the crossings in
Figure~\ref{fig:linkDip}.

\begin{figure}
  \labellist
  \small
  \pinlabel $1$ [B] at 207 382
  \pinlabel $2$ [B] at 207 345
  \pinlabel $3$ [B] at 207 307
  \pinlabel $4$ [B] at 207 270

  \pinlabel $1'$ [B] at 207 117
  \pinlabel $2'$ [B] at 207 81

  \pinlabel $4$ [B] at 1016 361
  \pinlabel $3$ [B] at 1016 331
  \pinlabel $2$ [B] at 1016 299
  \pinlabel $1$ [B] at 1016 273

  \pinlabel $2'$ [B] at 862 109
  \pinlabel $1'$ [B] at 862 85

  \pinlabel $1$ [B] at 1454 382
  \pinlabel $2$ [B] at 1454 345
  \pinlabel $3$ [B] at 1454 307
  \pinlabel $4$ [B] at 1454 270

  \pinlabel $1'$ [B] at 1454 117
  \pinlabel $2'$ [B] at 1454 80

  \pinlabel $1$ [B] at 2000 382
  \pinlabel $2$ [B] at 2000 346
  \pinlabel $3$ [B] at 2000 307
  \pinlabel $4$ [B] at 2000 271

  \pinlabel $1'$ [B] at 2000 118
  \pinlabel $2'$ [B] at 2000 79
  \endlabellist
  \includegraphics[width=\textwidth]{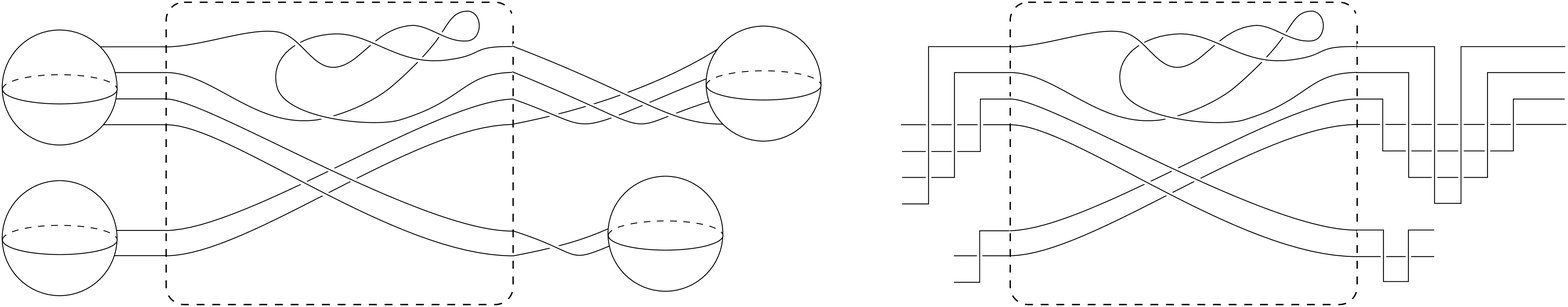}
  \caption{The left gives a Legendrian $xy$-diagram of a link in
    $\#^2(S^1\times S^2)$ which has resulted from the resolution of a
    link in Gompf standard form. The right gives the dipped version of
    the link where the half of a dip on the left side of the dipped
    version is identified with the right half of the dip on the right
    side. See Figure~\ref{fig:dipLabels} for the labeling of the
    crossings in the dips.}
  \label{fig:linkDip}
\end{figure}

\begin{figure}
  \labellist
  \pinlabel $4$ [r] at 0 99
  \pinlabel $3$ [r] at 0 132
  \pinlabel $2$ [r] at 0 164
  \pinlabel $1$ [r] at 0 198

  \pinlabel $b_{14}$ [tr] at 100 99
  \pinlabel $b_{13}$ [tr] at 100 67
  \pinlabel $b_{12}$ [tr] at 100 35
  \pinlabel $b_{24}$ [tr] at 68 99
  \pinlabel $b_{23}$ [tr] at 68 67
  \pinlabel $b_{34}$ [tr] at 37 99

  \pinlabel $c^0_{14}$ [tl] at 147 99
  \pinlabel $c^0_{13}$ [tl] at 147 67
  \pinlabel $c^0_{12}$ [tl] at 147 35
  \pinlabel $c^0_{24}$ [tl] at 179 99
  \pinlabel $c^0_{23}$ [tl] at 179 67
  \pinlabel $c^0_{34}$ [tl] at 212 99
  \endlabellist
  \includegraphics[width=.35\textwidth]{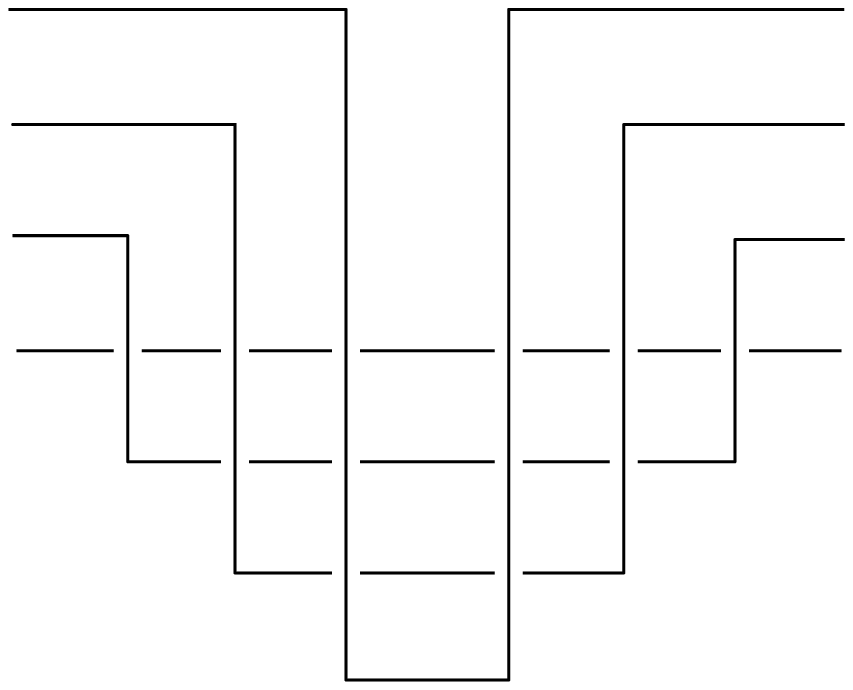}
  \caption{This is the dip at the right of the right figure in
    Figure~\ref{fig:linkDip} with strands and crossings labeled. The
    labels of the partial dip at the left of the right figure in
    Figure~\ref{fig:linkDip} are the same as the right half of the dip
    depicted.}
  \label{fig:dipLabels}
\end{figure}

\begin{defn}
  Let $a,b_1,\ldots,b_\ell$ be generators. Define
  $\Delta(a;b_1,\ldots,b_\ell)$ to be the set of
  orientation-preserving immersions
  \[f:D^2\to\reals^2\] (up to smooth reparametrization) that map
  $\partial D^2$ to the dip version of $\Lambda$ such that
  \begin{enumerate}
  \item $f$ is a smooth immersion except at $a,b_1,\ldots,b_\ell$,
  \item $a,b_1,\ldots,b_\ell$ are encountered as one traverses
    $f(\partial D^2)$ counterclockwise,
  \item near $a,b_1,\ldots,b_\ell$, $f(D^2)$ covers exactly one
    quadrant, specifically, a quadrant with positive Reed sign near
    $a$ and a quadrant with negative Reeb sign near
    $b_1,\ldots,b_\ell$, where the Reeb sign of a quadrant near a
    crossing is defined as in Figure~\ref{fig:orientation}.
  \end{enumerate}
\end{defn}

To each immersed disk, we can assign a word in $\A(\Lambda)$ by
starting with the first corner where the quadrant covered has negative
Reeb sign, $b_1$, and listing the crossing labels of all negative
corners as encountered while following the boundary of the immersed
polygon counterclockwise, $b_1\cdots b_\ell$. We associate an {\bf
  orientation sign} $\delta_{Q,a}$ to each quadrant $Q$ in the
neighborhood of a crossing $a$, defined in
Figure~\ref{fig:orientation}, and use these to define the sign of a
disk $f(D^2)$ to be the product of the orientation signs over all the
corners of the disk. We denote this sign by $\delta(f)$. In many cases
there is a unique disk with positive corner at $a$ (with respect to
Reeb sign) and negative corners at $b_1,\ldots,b_\ell$ and in these we
define $\delta(a;b_1,\ldots,b_\ell)$ to be the sign of the unique
disk. (In exceptional cases there may be more than one disk with
positive corner at $a$ and negative corners at $b_1,\ldots,b_\ell$.)

\begin{figure}
  \labellist
  \pinlabel $-$ [b] at 56 57
  \pinlabel $-$ [t] at 56 57
  \pinlabel $+$ [l] at 56 57
  \pinlabel $+$ [r] at 56 57

  \pinlabel $+$ [br] at 248 55
  \pinlabel $+$ [tl] at 248 55
  \pinlabel $-$ [bl] at 248 55
  \pinlabel $-$ [tr] at 248 55
  \endlabellist

  \includegraphics[width=2.5in]{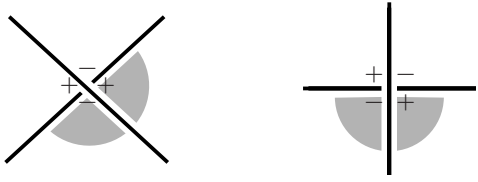}
  \caption{The signs in the figure give the Reeb signs of the
    quadrants around the crossings. The orientation signs are $+1$ for
    all quadrants of crossings of odd degree. For crossings of even
    degree, we use the convention indicated in the left figure if the
    crossing comes from the $xz$-projection and the convention in the
    right figure if the crossing is in a dip, which will be discussed
    in \S \ref{sec:dips}, where the shaded quadrants have orientation
    sign $-1$ and the other quadrants have orientation sign $+1$.}
  \label{fig:orientation}
\end{figure}

Define $n_{*_i}(f)$ or $n_{*_i}(a;b_1,\ldots,b_\ell)$ to be the signed
count of the number of times one encounters the base point $*_i$ while
following $f(\partial D^2)$ counterclockwise, where the sign is
positive if we encounter the base point while following the
orientation of the link component and negative if we encounter the
base point while going against the orientation.

We define
\[\partial(a_i)=\sum_{\ell\geq0}\sum_{(b_1,\ldots,b_\ell)}\sum_{f\in\Delta(a_i;b_1,\ldots,b_\ell)}\delta(f)t_1^{n_{*_1}(f)}\cdots
t_s^{n_{*_s}(f)}b_1\cdots b_\ell\] and extend to $\A(\Lambda)$ by the
Leibniz rule.

In \cite{Ekholms1s2}, Ekholm and Ng prove the map $\partial$ has
degree $-1$ and is a differential, $\partial^2=0$.


\begin{figure}
  \labellist
  \small
  \pinlabel $1$ [B] at 174 284
  \pinlabel $2$ [B] at 174 260
  \pinlabel $3$ [B] at 174 235
  \pinlabel $4$ [B] at 174 211

  \pinlabel $\bar1$ [B] at 174 115
  \pinlabel $\bar2$ [B] at 174 81

  \pinlabel $1$ [B] at 779 284
  \pinlabel $2$ [B] at 779 260
  \pinlabel $3$ [B] at 779 235
  \pinlabel $4$ [B] at 779 211

  \pinlabel $\bar1$ [B] at 779 115
  \pinlabel $\bar2$ [B] at 779 81

  \pinlabel $1$ [B] at 1189 383
  \pinlabel $2$ [B] at 1189 346
  \pinlabel $3$ [B] at 1189 307
  \pinlabel $4$ [B] at 1189 270

  \pinlabel $\bar1$ [B] at 1189 117
  \pinlabel $\bar2$ [B] at 1189 81

  \pinlabel $1$ [B] at 1727 383
  \pinlabel $2$ [B] at 1727 345
  \pinlabel $3$ [B] at 1727 307
  \pinlabel $4$ [B] at 1727 270

  \pinlabel $\bar1$ [B] at 1727 118
  \pinlabel $\bar2$ [B] at 1727 79

  \pinlabel $1$ [b] at 1252 399
  \pinlabel $2$ [b] at 1250 302
  \pinlabel $3$ [t] at 1249 252

  \pinlabel $a_1$ [b] at 1400 388
  \pinlabel $a_2$ [b] at 1509 388
  \pinlabel $a_3$ [b] at 1441 281
  \pinlabel $a_4$ [b] at 1570 364
  \pinlabel $a_5$ [b] at 1600 401
  \pinlabel $a_6$ [b] at 1408 187
  \pinlabel $a_7$ [b] at 1448 205
  \pinlabel $a_8$ [t] at 1439 169
  \pinlabel $a_9$ [b] at 1481 190

  \endlabellist
  \includegraphics[width=\textwidth]{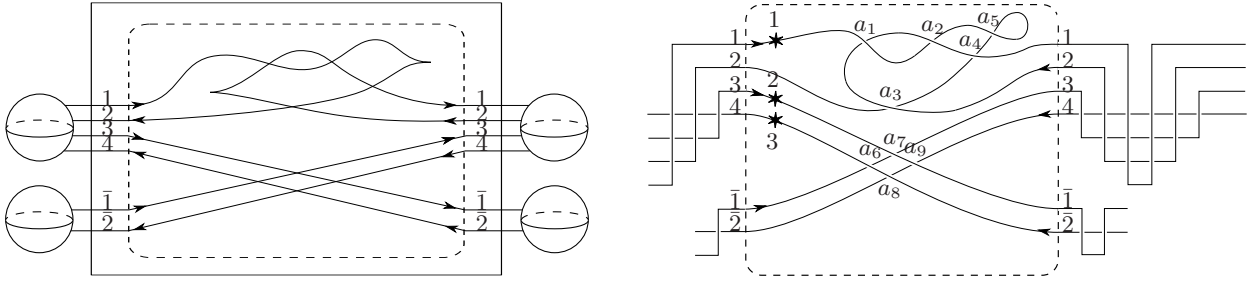}
  \caption{The left gives a Legendrian $xz$-diagram in $\#^2(S^1\times
    S^2)$ in Gompf standard form. The right gives the dip form of the
    normal form. Recall the labels on the crossings in the dips from
    Figure~\ref{fig:dipLabels} for the top $1$-handle and label the
    left crossing $\bar{b}_{12}$ and the right $\bar{c}_{12}$ in the
    dip of the bottom $1$-handle.}
  \label{fig:example}
\end{figure}

\begin{ex} \label{ex:dga} The following is the definition of the DGA
  $(\A(\Lambda),\partial)$ for the Legendrian link $\Lambda$ in Figure
  \ref{fig:example}. Here $\A(\Lambda)$ is generated by
  $a_1,\ldots,a_9,b_{ij},c^p_{ij}$ over
  $\integers[t_1^{\pm1},t_2^{\pm1},t_3^{\pm1}]$. We set $\lvert
  t_i\rvert=2r(\Lambda_i)=0$ for $i=1,2,3$. Define a Maslov potential
  $m$ on the strands near $x=0$ by
  \[\begin{array}{c|*{6}{c}}
    i&1&2&3&4&\bar1&\bar2\\
    \hline
    m(i)&2&1&0&-1&0&-1
  \end{array}\]
  Then we have the following gradings: $\lvert a_1\rvert=\lvert a_2\rvert=\lvert a_3\rvert=\lvert a_7\rvert=\lvert a_8\rvert=0$, $\lvert a_4\rvert=\lvert a_5\rvert=\lvert a_9\rvert=1$, $\lvert a_6\rvert=-1$,
  \[\begin{array}{c|*{7}{C{.15in}}}
    ij&12&13&14&23&24&34&\widebar{12}\\
    \hline \lvert b_{ij}\rvert&1&2&3&2&2&1&1\\
    \lvert c^0_{ij}\rvert&0&1&2&0&1&0&0
  \end{array}\]
  \[\hspace{-.2in}\begin{array}{*{6}{C{.15in}}}
    &&\multicolumn{4}{c}{j}\\
    &\multicolumn{1}{ c| }{\lvert c^1_{ij}\rvert}&1&2&3&4\\
    \cline{2-6}\multirow{4}{*}{i}&\multicolumn{1}{ c| }{1}&1&2&3&4\\
    &\multicolumn{1}{ c| }{2}&0&1&2&3\\
    &\multicolumn{1}{ c| }{3}&-1&0&1&2\\
    &\multicolumn{1}{ c| }{4}&-2&-1&0&1
  \end{array}
  \hspace{1in}
  \begin{array}{*{6}{C{.15in}}}
    &&\multicolumn{4}{c}{j}\\
    &\multicolumn{1}{ c| }{\lvert c^2_{ij}\rvert}&1&2&3&4\\
    \cline{2-6}\multirow{4}{*}{i}&\multicolumn{1}{ c| }{1}&3&4&5&6\\
    &\multicolumn{1}{ c| }{2}&2&3&4&5\\
    &\multicolumn{1}{ c| }{3}&1&2&3&4\\
    &\multicolumn{1}{ c| }{4}&0&1&2&3
  \end{array}
  \] where $\widebar{12}$ is the crossing of the strands in the bottom
  $1$-handle. Since $\lvert c^p_{ij}\rvert=2p-1+m(i)-m(j)$, we know
  $\lvert c^p_{ij}\rvert>0$ for $p>2$.

  For ease of notation, we will use $\bar{c}^p_{12}$ to denote
  $c^p_{\widebar{12}}$. We then have the following differentials:

  \begin{align*}
    \partial a_1&=\partial a_2=\partial a_3=\partial a_6=0\\
    \partial a_4&=(1+a_2a_1)a_3-t_1^{-1}a_2c^0_{12}\\
    \partial a_5&=1-a_1a_3+t_1^{-1}c^0_{12}\\
    \partial a_7&=t_2^{-1}t_3^{-1}c^0_{34}a_6\\
    \partial a_8&=a_6\bar{c}^0_{12}\\
    \partial a_9&=t_2^{-1}t_3^{-1}c^0_{34}a_8-a_7\bar{c}^0_{12}\displaybreak[2]\\[.2in]
    \partial b_{12}&=1+a_2a_1-c^0_{12}\\
    \partial b_{13}&=(1+a_2a_1)b_{23}+a_4(t_2c^0_{23}a_7+t_3^{-1}c^0_{24}a_6)-t_1^{-1}a_2(t_2c^0_{13}a_7+t_3^{-1}c^0_{14}a_6)-c^0_{13}+b_{12}c^0_{23}\\
    \partial b_{14}&=(1+a_2a_1)b_{24}-[a_4(t_2c^0_{23}a_7+t_3^{-1}c^0_{24}a_6)-t_1^{-1}a_2(t_2c^0_{13}a_7+t_3^{-1}c^0_{14}a_6)]b_{34}\\
    &\quad+(a_4c^0_{23}-t_1^{-1}a_2c^0_{13})t_2a_9+(a_4c^0_{24}-t_1^{-1}a_2c^0_{14})t_3^{-1}a_8-c^0_{14}+b_{12}c^0_{24}-b_{13}c^0_{34}\\
    \partial b_{23}&=-a_3(t_2c^0_{23}a_7+t_3^{-1}c^0_{24}a_6)-c^0_{23}\\
    \partial b_{24}&=-a_3(t_2c^0_{23}a_7+t_3^{-1}c^0_{24}a_6)b_{34}-t_3^{-1}a_3c^0_{24}a_8-c^0_{24}+b_{23}c^0_{34}-t_2a_3c^0_{23}a_9\\
    \partial b_{34}&=\bar{c}^0_{12}-c^0_{34}\displaybreak[3]\\[.2in]
    \partial \bar{b}_{12}&=t_2^{-1}t_3^{-1}c^0_{34}-\bar{c}^0_{12}\displaybreak[2]\\[.2in]
    \partial
    c^p_{ij}&=\delta_{ij}\delta_{1p}+\sum_{\ell=0}^p\sum_{m=1}^4(-1)^{\lvert
      c^\ell_{im}\rvert+1}c^\ell_{im}c^{p-\ell}_{mj}\\[.2in]
    \partial
    \bar{c}^p_{ij}&=\delta_{ij}\delta_{1p}+\sum_{\ell=0}^p\sum_{m=1}^2(-1)^{\lvert
      \bar{c}^\ell_{im}\rvert+1}\bar{c}^\ell_{im}\bar{c}^{p-\ell}_{mj}
  \end{align*}
\end{ex}

\begin{defn}
  Let $(\A,\partial)$ be a semifree DGA over $R$ generated by
  $\{a_i\vert i\in I\}$. Let $J$ be a countable (possibly finite)
  index set. A {\bf stabilization} of $(\A,\partial)$ is the semifree
  DGA $(S(\A),\partial)$, where $S(\A)$ is the tensor algebra over $R$
  generated by $\{a_i\vert i\in I\}\cup\{\alpha_j\vert j\in
  J\}\cup\{\beta_j\vert j\in J\}$ and the grading on $a_i$ is
  inherited from $\A$ and $\lvert \alpha_j\rvert=\lvert
  \beta_j\rvert+1$ for all $j\in J$. Let the differential on $S(\A)$
  agree with the differential on $\A\subset S(\A)$, define
  \[\partial(\alpha_j)=\beta_j\text{ and }\partial(\beta_j)=0\]
  for all $j\in J$, and extend by the Leibniz rule.
\end{defn}

\begin{defn}
  Two semifree DGAs $(\A,\partial)$ and $(\A',\partial')$ are
  {\bf stable tame isomorphic} if some stabilization of
  $(\A,\partial)$ is tamely isomorphic (see \cite{Ekholms1s2}) to
  some stabilization of $(\A',\partial')$.
\end{defn}

\begin{thm}[\cite{Ekholms1s2} Theorem 2.18]
  Let $\Lambda$ and $\Lambda'$ be Legendrian isotopic Legendrian links
  in $\#^k(S^1\times S^2)$ in normal form. Let
  $(\A(\Lambda),\partial)$ and $(\A(\Lambda'),\partial')$ be the
  semifree DGAs over $R=\integers[t^{\pm1}_1,\ldots,t^{\pm1}_s]$
  associated to the diagrams $\pi_{xy}(\Lambda)$ and
  $\pi_{xy}(\Lambda')$, which are in normal form. Then
  $(\A(\Lambda),\partial)$ and $(\A(\Lambda'),\partial')$ are stable
  tame isomorphic.
\end{thm}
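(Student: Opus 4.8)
The plan is to follow the template of Chekanov and of Etnyre--Ng--Sabloff for $\reals^3$, adapted to the $1$-handles. First I would invoke a Reidemeister-type theorem for Legendrian links in $\#^k(S^1\times S^2)$: any two Legendrian isotopic links, presented in Gompf standard form and then resolved to $xy$-diagrams in normal form, are related by a finite sequence of elementary moves. These are of four types: (i) the three ordinary Legendrian Reidemeister moves (a triple-point move, and the two birth/death moves introducing a pair of crossings or a cusp) taking place in a disk disjoint from the $1$-handles and from all dips; (ii) moves that create, delete, or slide a dip, including sliding a dip past a crossing or a cusp; (iii) moves that push a strand or a crossing through a $1$-handle, or reorder/slide strands within one of the groups meeting $x=0$ or $x=A$; and (iv) moves changing the position of a base point $*_i$. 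Existence of such a generating set of moves is precisely what the Gompf standard form and the normal-form/resolution machinery of \S\ref{sec:background} (following \cite{Ekholms1s2}) is set up to provide.

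For each elementary move I would exhibit the required stable tame isomorphism. For type (i) the local picture and the disk counts near the move are identical to those in $\reals^3$, so the computations of Chekanov (for the orientation signs) and of Etnyre--Ng--Sabloff and \cite{Leverson} (for the $t_i$-coefficients) apply verbatim: the triple-point move gives a tame isomorphism, namely an elementary automorphism adjusting a single generator by a word, while each birth/death move realizes the new DGA as a stabilization of the old one after a tame change of coordinates that normalizes $\partial\alpha=\beta$. Type (iv) is a tame isomorphism rescaling the affected generators by powers of the $t_i$. The substance is in types (ii) and (iii). For a dip move I would check, as in \cite{Ekholms1s2}, that introducing a dip adjoins the generators $c^0_{ij}, b_{ij}$ in canceling pairs together with an explicit tame change of coordinates on the pre-existing generators, whose differentials then ``factor through the dip,'' so that adding, removing, or sliding a dip is a stabilization composed with a tame isomorphism; the filtration on the internal DGA guarantees these changes of coordinates are well defined and invertible. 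For a type (iii) move, either the number $N_\ell$ of strands through the handle is unchanged --- in which case $(\A_{N_\ell},\partial_{N_\ell})$ is literally the same and only the interaction of the differentials of the crossing generators $\ta_i$ with the $c^p_{ij;\ell}$ is modified, by an elementary, grading- and filtration-preserving automorphism --- or a pair of strands through the handle is created or destroyed, which one checks is again a stabilization (adjoining new internal generators with $\partial\alpha=\beta$) composed with a tame isomorphism; at each step verifying that $\partial^2=0$ is preserved is where one uses that $\partial_{N_\ell}$ already satisfies the internal relations. Since a stabilization of a stabilization is again a stabilization and tame isomorphisms compose, chaining the isomorphisms along the sequence of moves gives the stable tame isomorphism $(\A(\Lambda),\partial)\simeq(\A(\Lambda'),\partial')$.

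The main obstacle I expect lives entirely in the $1$-handle and has two parts. First, pinning down a complete and checkable list of elementary moves through the $1$-handles --- the ``Reidemeister theorem'' in $\#^k(S^1\times S^2)$ --- which is where the Gompf standard form and the normal-form/resolution setup must do their work. Second, carrying out the stable-tame-isomorphism bookkeeping for the dip moves and the handle moves in the presence of the infinitely generated internal DGA: one must simultaneously track the mod-$2r(\Lambda)$ grading, the orientation signs from Figure~\ref{fig:orientation}, the $t_i$-powers recorded by the $n_{*_i}$, and the filtration, and confirm that the change-of-coordinates maps built along the way are genuinely tame (composites of elementary automorphisms) rather than merely abstract DGA isomorphisms.
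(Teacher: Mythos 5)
The paper does not actually prove this statement: it is imported verbatim as Theorem~2.18 of \cite{Ekholms1s2}, so there is no internal proof to compare against. Measured against the proof in that reference, your outline does capture the correct broad strategy: reduce, via Gompf's classification of front diagrams in standard form, to invariance under the Legendrian Reidemeister moves of the tangle together with the three Gompf moves 4, 5, and 6 (the same move set this paper invokes when proving invariance of the ruling polynomial), and then exhibit a stable tame isomorphism for each move, with the tangle-interior moves handled exactly as in $\reals^3$ following Chekanov and Etnyre--Ng--Sabloff.

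As a proof, however, your proposal has a genuine gap, one you flag yourself but do not close: the move that changes the number of strands through a $1$-handle (Gompf move~6, which pushes a cusp through a handle and changes $N_\ell$ by $2$). This is not a finite stabilization in any naive sense. The internal DGA $(\A_{N_\ell+2},\partial_{N_\ell+2})$ acquires infinitely many new generators $c^p_{ij}$ involving the two new strands, and the differentials of the pre-existing generators change as well; showing the result is stable tame isomorphic to the old DGA requires an infinite, filtration-compatible sequence of cancellations and tame changes of coordinates (organized by the filtration in which $c^p_{ij}$ lives in level $p$), together with a limiting destabilization argument. Your sentence asserting that this step ``is again a stabilization (adjoining new internal generators with $\partial\alpha=\beta$) composed with a tame isomorphism'' is precisely the assertion that needs proof, and it is the technical heart of Ekholm--Ng's argument. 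Likewise, the claim that creating or removing a dip ``factors through the dip'' and yields a stabilization plus tame isomorphism is exactly where the definition of the DGA near the handles as a limit of increasingly dipped diagrams enters, and cannot be taken for granted. So the proposal is a correct plan of attack, but the two steps you defer are the ones that distinguish this theorem from its $\reals^3$ antecedent.
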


\begin{defn} Let $F$ be a field. An {\bf augmentation} of
  $(\A(\Lambda),\partial)$ to $F$ is a ring map
  $\epsilon:\A(\Lambda)\to F$ such that $\epsilon\circ\partial=0$ and
  $\epsilon(1)=1$. If $\rho\vert2r(\Lambda)$ and $\epsilon$ is
  supported on generators of degree divisible by $\rho$, then
  $\epsilon$ is $\rho$-graded. In particular, if $\rho=0$, we say it
  is {\bf graded} and if $\rho=1$, we say if is {\bf ungraded}. We
  call a generator $a$ {\bf augmented} if $\epsilon(a)\neq0$.
\end{defn}

\begin{ex}
  Recalling the DGA of the Legendrian link in Figure \ref{fig:example}
  of Example~\ref{ex:dga}, given a field $F$, one can check that any
  graded augmentation $\epsilon$ to $F$ satisfies the following:
  $\epsilon(t_1)=-1$, $\epsilon(t_3)=\epsilon(t_2)^{-1}$ where
  $\epsilon(t_2)\neq0$, $\epsilon(b_{ij})=\epsilon(\bar{b}_{12})=0$,
  and for $a,b,c,d,e,f\in F$ such that $1+ab,d,e\neq0$
  
  \[\begin{array}{c|*{9}{C{.1in}}}
    i&1&2&3&4&5&6&7&8&9\\
    \hline \epsilon(a_i)&a&b&-b&0&0&0&c&c&0
  \end{array}
  \hspace{1in}\begin{array}{c|*{7}{c}}
    ij&12&13&14&23&24&34&\widebar{12}\\
    \hline\epsilon(c^0_{ij})&1+ab&0&0&0&0&d&d
  \end{array}\]
  \[\hspace{-.2in}\begin{array}{*{6}{c}}
    &&\multicolumn{4}{c}{j}\\
    &\multicolumn{1}{ c| }{\lvert c^1_{ij}\rvert}&1&2&3&4\\
    \cline{2-6}\multirow{4}{*}{i}&\multicolumn{1}{ c| }{1}&0&0&0&0\\
    &\multicolumn{1}{ c| }{2}&e&0&0&0\\
    &\multicolumn{1}{ c| }{3}&0&f&0&0\\
    &\multicolumn{1}{ c| }{4}&0&0&(1+ab)d^{-1}e&0
  \end{array}
  \hspace{1.5in}
  \begin{array}{*{6}{c}}
    &&\multicolumn{4}{c}{j}\\
    &\multicolumn{1}{ c| }{\lvert c^2_{ij}\rvert}&1&2&3&4\\
    \cline{2-6}\multirow{4}{*}{i}&\multicolumn{1}{ c| }{1}&0&0&0&0\\
    &\multicolumn{1}{ c| }{2}&0&0&0&0\\
    &\multicolumn{1}{ c| }{3}&0&0&0&0\\
    &\multicolumn{1}{ c| }{4}&-(1+ab)d^{-1}f&0&0&0
  \end{array}
  \]
\end{ex}

Note that any augmentation of a stabilization $S(\A)$ restricts to an
augmentation of the smaller algebra $\A$ and any augmentation of the
algebra $\A$ extends to an augmentation of the stabilization $S(\A)$
where the augmentation sends $\beta_j$ to $0$ and $\alpha_j$ to an
arbitrary element of $F$ if $\rho\vert\lvert \alpha_j\rvert$ and $0$
otherwise for all $j\in J$.

\subsection{Normal rulings in $\#^k(S^1\times S^2)$}

In this section, we extend the definition of a normal ruling from
Legendrian links in $\reals^3$ to Legendrian links in $\#^k(S^1\times
S^2)$. We formulate the definition similarly to how Lavrov and
Rutherford \cite{LavrovSolidTorus} define normal rulings in the case
of Legendrian links in the solid torus.

Consider the tangle portion of the $\pi_{xz}(\Lambda)$ diagram in
normal form of a Legendrian link $\Lambda\subset\#^k(S^1\times S^2)$. A
normal ruling can be viewed locally as a decomposition of
$\pi_{xz}(\Lambda)$ into pairs of paths.

Let $C\subset S^1$ be the set of $x$-coordinates of crossings and
cusps of $\pi_{xz}(\Lambda)$ where $S^1=[0,A]/\{0=A\}$. We can write
\[S^1\backslash C=\coprod_{\ell=1}^MI_\ell\] where $I_\ell$ is an open
interval (or all of $S^1$) for each $\ell$. We will use the convention
that $I_0=I_M$ and the $I_\ell$ are ordered $I_0,\ldots,I_M$ from
$x=0$ to $x=A$ (from left to right in the $xz$-diagram) so that
$I_{\ell-1}$ appears to the left of (has lower $x$-coordinates than)
$I_\ell$. Note that $(I_\ell\times[-M,M])\cap\pi_{xz}(\Lambda)$
consists of some number of nonintersecting components which project
homeomorphically onto $I_\ell$. We call these components {\bf strands}
of $\pi_{xz}(\Lambda)$ and number them from top to bottom by
$1,\ldots,N(\ell)$. For each $\ell$, choose a point $x_\ell\in
I_\ell$.

\begin{defn}\label{defn:normalRuling}
  A {\bf normal ruling} of $\pi_{xz}(\Lambda)$ is a sequence of
  involutions $\sigma=(\sigma_1,\ldots,\sigma_M)$,
  \begin{gather*}
    \sigma_m:\{1,\ldots,N(m)\}\to\{1,\ldots,N(m)\}\\
    (\sigma_m)^2=id,
  \end{gather*}
satisfying:
  \begin{enumerate}
  \item Each $\sigma_m$ is fixed-point-free.
  \item If the strands above $I_m$ labeled $\ell$ and $\ell+1$ meet at
    a left cusp in the interval $(x_{m-1},x_m)$, then
    \[\sigma_m(i)=\begin{cases}
      \ell+1&\text{if }i=\ell,\\
      \sigma_{m-1}(i)&\text{if }i<\ell,\\
      \sigma_{m-1}(i-2)&\text{if }i>\ell+1.
    \end{cases}\] And a similar condition at right cusps.
  \item \label{cond:switch} If strands above $I_m$ labeled $\ell$ and
    $\ell+1$ meet at a crossing on the interval $(x_{m-1},x_m)$, then
    $\sigma_{m-1}(\ell)\neq\ell+1$ and either
    \begin{itemize}
    \item
      $\sigma_m=(\ell\quad\ell+1)\circ\sigma_{m-1}\circ(\ell\quad\ell+1)$
      where $(\ell\quad\ell+1)$ denotes transposition or
    \item $\sigma_m=\sigma_{m-1}$.
    \end{itemize}
    When the second case occurs, we call the crossing {\bf
      switched}. We say the normal ruling is {\bf $\rho$-graded} if
    $\rho\big\vert\lvert c\rvert$ for all switched crossings $c$.
  \item \label{cond:normal} (Normality condition) If there is a
    switched crossing on the interval $(x_{m-1},x_m)$, then one of the
    following holds:
    \begin{itemize}
    \item $\sigma_m(\ell+1)<\sigma_m(\ell)<\ell<\ell+1$
    \item $\sigma_m(\ell)<\ell<\ell+1<\sigma_m(\ell)$
    \item $\ell<\ell+1<\sigma_m(\ell+1)<\sigma_m(\ell)$
    \end{itemize}
  \item \label{cond:sameHandle} Near $x=0$ and $x=A$, both the strand
    with label $\ell$ and $\sigma_0(\ell)$ must go through the same
    $1$-handle, in other words, there exists $p$ such that
    $\sum_{i=1}^{p-1}N_i<\ell,\sigma_0(\ell)\leq\sum_{i=1}^pN_i$.
  \end{enumerate}
\end{defn}

The final condition is the only condition which is different from how
normal rulings are defined in \cite{LavrovSolidTorus} for the case of
solid torus knots. This condition ensures the ruling ``behaves well''
with the $1$-handles.

\begin{rmk}
  As in \cite{LavrovSolidTorus}, one can equivalently see normal
  rulings as pairings of strands in the $xz$-diagram with certain
  conditions. Here we think of strands $i$ and $j$ being paired for
  $x_{m-1}\leq x\leq x_m$ if $\sigma_m(i)=j$. In this way, we can
  cover the $xz$-diagram with pairs of paths which have monotonically
  increasing $x$-coordinate. Note that if a path goes all the way from
  $x=0$ to $x=A$, it may end up on a different strand than it started,
  but strand $i$ is paired with strand $j$ at $x=0$ if and only if
  they are paired at $x=A$. Condition~\ref{cond:sameHandle} also
  specifies that the paired strands must go through the same
  $1$-handle. The conditions mentioned above are as follows: Paired
  paths can only meet at a cusp. This also means that at a crossing,
  the crossings strands must be paired with other strands. These {\bf
    companion strands} can either lie above or below the
  crossing. Conditions~\ref{cond:switch} and \ref{cond:normal} specify
  that near a crossing the pairings must be one of those depicted in
  Figure~\ref{fig:config}.
\end{rmk}

\begin{ex}
  Figure~\ref{fig:exRulings} gives the normal rulings of the Legendrian
  link from Example~\ref{ex:dga}.
\end{ex}

\begin{figure}
  \labellist
  \small\hair 2pt
  \pinlabel $(a)$ [t] at 82 267
  \pinlabel $(b)$ [t] at 339 267
  \pinlabel $(c)$ [t] at 593 267
  \pinlabel $(d)$ [t] at 82 -20
  \pinlabel $(e)$ [t] at 339 -20
  \pinlabel $(f)$ [t] at 593 -20
  \endlabellist
  \includegraphics[width=.6\textwidth]{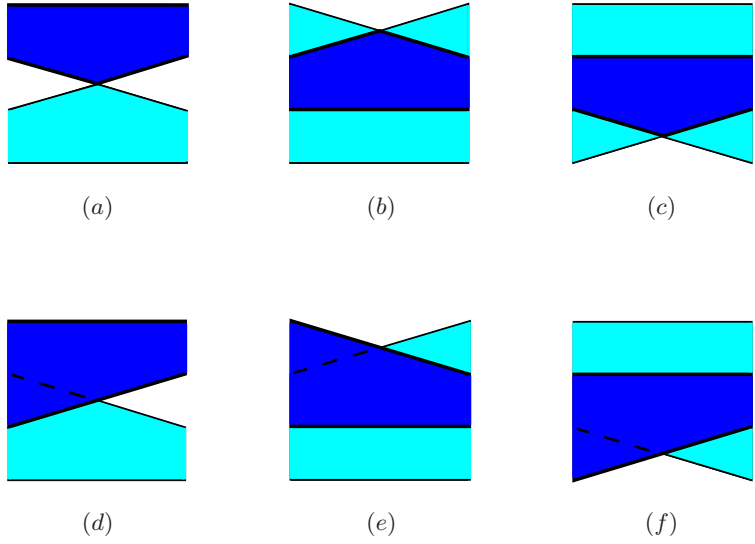}
  \vspace{.25in}
  \caption{These, along with vertical reflections of (d), (e), and
    (f), are all possible configurations of a normal ruling near a
    crossing. The top row contains all possible configurations for
    switched crossings in a normal ruling. (This figure is taken from
    \cite{Leverson}.)}
  \label{fig:config}
\end{figure}

\begin{figure}
  \includegraphics[width=\textwidth]{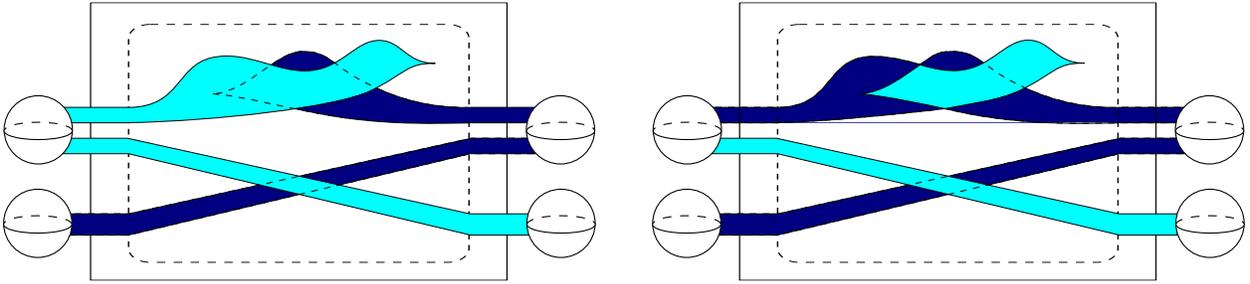}
  \vspace{.25in}
  \caption{These are the two normal rulings of the Legendrian link of
    Example~\ref{ex:dga} seen in Figure~\ref{fig:example}.}
  \label{fig:exRulings}
\end{figure}

Similarly to $\reals^3$, we can define a $\rho$-graded ruling
polynomial.

\begin{defn}
  If $m$ is a $\integers/\rho$-valued Maslov potential for a
  Legendrian link $\Lambda$, then the {\bf $\rho$-graded ruling
    polynomial} of $\Lambda$ with respect to $m$ is
  \[R^\rho_{(\Lambda,m)}=\sum_\sigma z^{j(\sigma)},\] where the sum is
  over all $\rho$-graded normal rulings of $\Lambda$ and
  \[j(\sigma)=\text{\# switches}-\text{\# right cusps}.\]
\end{defn}

Note that in the case where $\Lambda$ is a knot, the ruling polynomial
does not depend on the Maslov potential. Restated from the introduction:

\begin{thmRulingPoly}
  The $\rho$-graded ruling polynomial $R^\rho_{(\Lambda,m)}$ with
  respect to the Maslov potential $m$ (which changes under Legendrian
  isotopy) is a Legendrian isotopy invariant.
\end{thmRulingPoly}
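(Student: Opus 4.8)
The plan is to mimic the standard proof that the ruling polynomial is a Legendrian isotopy invariant in $\reals^3$ (as in Chekanov–Pushkar, Fuchs, and Sabloff), adapted to the setting of $\#^k(S^1\times S^2)$ following the generalized-ruling arguments of Lavrov–Rutherford in \cite{LavrovSolidTorus}. Concretely, I would first recall that any two front ($xz$) diagrams in Gompf standard form of Legendrian isotopic links are related by a finite sequence of moves: the Legendrian Reidemeister moves for fronts (the usual three moves in $\reals^3$, localized away from the $1$-handles), together with the handle moves/slides that appear in Gompf's calculus for $\#^k(S^1\times S^2)$ (handle slides and the ``translation'' of strands through the $1$-handles, as used in \cite{GompfHandlebody} and \cite{Ekholms1s2}). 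For each such move one must exhibit a bijection between $\rho$-graded normal rulings of the two diagrams that preserves $j(\sigma) = \#\text{switches} - \#\text{right cusps}$; summing $z^{j(\sigma)}$ over rulings then shows $R^\rho_{(\Lambda,m)}$ is unchanged. One must also track what happens to the Maslov potential $m$ under each move, since the statement explicitly allows $m$ to change; the point is that the ruling polynomial computed with the transported Maslov potential agrees.

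The moves away from the $1$-handles are essentially the classical case. For the Reidemeister-type moves on fronts, the local pictures of a normal ruling near the affected region are finite in number (using the configurations in Figure~\ref{fig:config} and the cusp conditions in Definition~\ref{defn:normalRuling}), and for each the bijection on rulings is the same one used in $\reals^3$: e.g.\ a triangle move permutes switched/unswitched crossings in a way that preserves the switch count, and a cusp-birth/death move changes the number of right cusps by one but simultaneously forces a compensating switch, keeping $j(\sigma)$ fixed. Conditions \ref{cond:switch} and \ref{cond:normal} are local and are checked move-by-move exactly as in the proof for links in $\reals^3$; one simply has to verify that the normality condition is respected by each local replacement, which is a routine finite check. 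Since these moves do not touch $x=0$ or $x=A$, Condition~\ref{cond:sameHandle} is automatically preserved.

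The genuinely new content, and the main obstacle, is the handle-related moves: sliding a strand across a $1$-handle, handle slides, and the moves that reorder or pass strands through the groups at $x=0$ and $x=A$. Here one must show that Condition~\ref{cond:sameHandle} (paired strands enter the same $1$-handle) is preserved and that the pairing data $\sigma_0$ transports correctly through the handle. The key observation is that in the ``pairing of strands'' formulation (the Remark after Definition~\ref{defn:normalRuling}) a normal ruling is a collection of pairs of monotone paths, and a strand is paired with its companion both at $x=0$ and at $x=A$; a handle move acts on this combinatorial data by an explicit local modification. For each Gompf move I would write down this modification, check it sends normal rulings to normal rulings (in particular preserving the same-handle condition and the three items of the normality condition at any crossing created or destroyed), verify it is a bijection with an explicit inverse given by the inverse move, and check it changes $\#\text{switches}$ and $\#\text{right cusps}$ by the same amount. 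I expect the bookkeeping for the half-twist crossings introduced by the resolution (and their gradings $m(i)-m(j)$) to be the fiddliest part, since one must confirm $\rho$-gradedness is preserved: a crossing is switched only if $\rho \mid |c|$, and the move must not turn a $\rho$-graded ruling into a non-$\rho$-graded one. Finally, since on a knot the Maslov potential is unique up to an even shift and switches are only counted mod $\rho \mid 2r(\Lambda)$, the parenthetical remark that the polynomial is Maslov-potential-independent for knots follows immediately, completing the proof.
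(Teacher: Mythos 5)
Your proposal follows essentially the same route as the paper: reduce to the moves of Gompf's calculus relating standard-form $xz$-diagrams, invoke the classical invariance (Chekanov--Pushkar) for the Legendrian Reidemeister moves occurring inside the tangle, and then check the handle moves (Gompf moves 4, 5, 6) directly. The only substantive verification the paper needs is for Gompf move 6, where Condition~\ref{cond:sameHandle} forces the two cusp strands to be paired with each other with no switches in the affected region, so the case analysis you promise for the handle moves is in fact very short.
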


\begin{proof}
  By Gompf~\cite{GompfHandlebody}, any Legendrian link in
  $\#^k(S^1\times S^2)$ can be represented by an $xz$-diagram in Gompf
  standard form and two such $xz$-diagrams represent links that are
  Legendrian isotopic if and only if they are related by a sequence of
  Legendrian Reidemeister moves of the $xz$-diagram of the tangle
  inside $[0,A]\times[-M,M]$ and three additional moves, which we
  will, following the nomenclature of \cite{Ekholms1s2}, call Gompf
  moves 4, 5, and 6 (see Figure~\ref{fig:gompfMoves}). By
  \cite{ChekanovFronts}, we know the ruling polynomial is invariant
  under Legendrian isotopy of the tangle, so we need only show it is
  invariant under Gompf moves 4, 5, and 6.

  \begin{figure}
    \labellist
    \small\hair 2pt 
    \pinlabel $4:$ [r] at -10 1216
    \pinlabel ${\large\Lambda}$ at 844 1216

    \pinlabel ${\large\Lambda}$ at 2873 1216

    \pinlabel $5:$ [r] at -10 701
    \pinlabel ${\large\Lambda}$ at 844 701

    \pinlabel ${\large\Lambda}$ at 2873 701

    \pinlabel $6:$ [r] at -10 182

    \pinlabel ${\large\Lambda}$ at 2873 182
    \endlabellist
    \includegraphics[width=.9\textwidth]{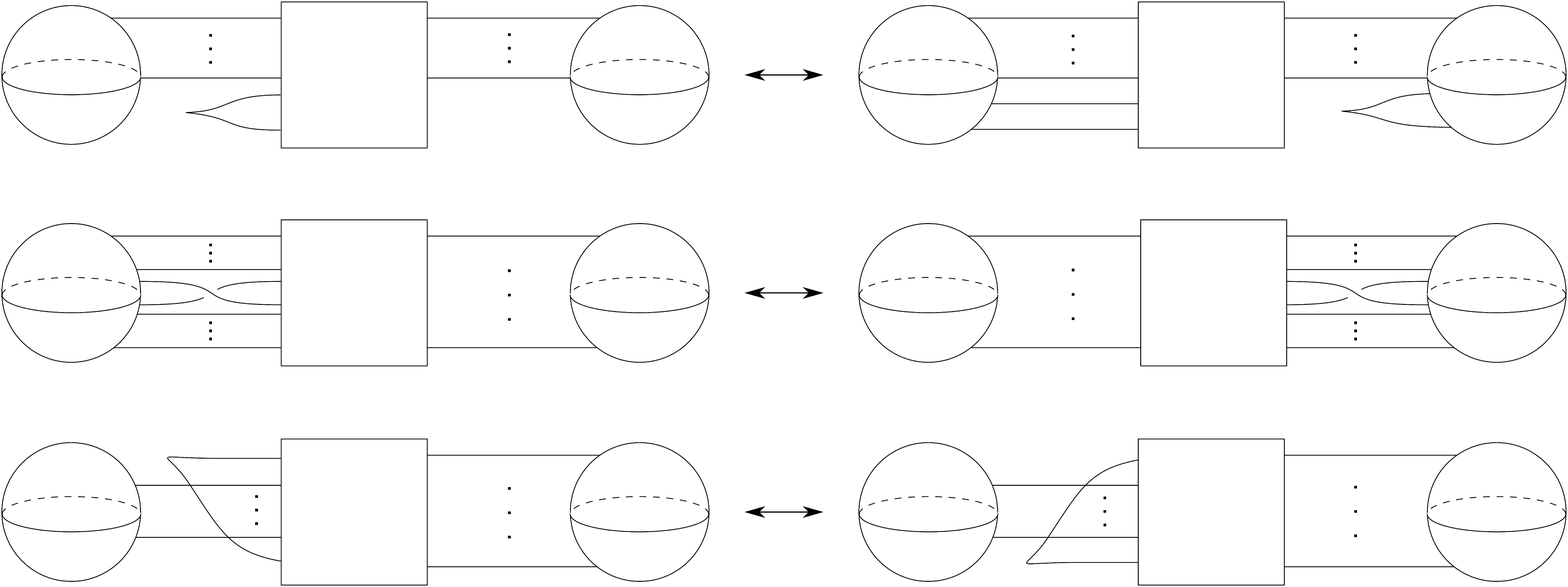}
    \vspace{.25in}
    \caption{Gompf moves 4, 5, and 6.}
    \label{fig:gompfMoves}
  \end{figure}

  Gompf moves 4 and 5 clearly do not change the ruling polynomial. For
  Gompf move 6, note that any normal ruling cannot pair a strand going
  through the $1$-handle with one of the strands incident to the
  cusp. Instead, the ruling must pair the two strands incident to the
  left cusp and not have any switches in the portion of the diagram
  depicted in Figure~\ref{fig:gompfMoves}, thus the ruling polynomial
  does not change.
\end{proof}

\begin{ex}
  The normal rulings for the Legendrian link from Example~\ref{ex:dga}
  are given in Figure~\ref{fig:exRulings}. Thus the ruling polynomial
  is
  \[R_{\Lambda}=z^{-1}+z.\]
\end{ex}

\subsection{Legendrian links in $\reals^3$}
\label{sec:linksR3}

The classical invariants for Legendrian isotopy classes of knots in
$\reals^3$ are: topological knot type, Thurston-Bennequin number, and
rotation number (see \cite{EtnyreLegendrianTrans}). The {\bf
  Thurston-Bennequin number} of a knot measures the self-linking of a
Legendrian knot $\Lambda$. Given a push off $\Lambda'$ of $\Lambda$ in
a direction tangent to the contact structure, then $tb(\Lambda)$ is
the linking number of $\Lambda$ and $\Lambda'$. Given the
$xz$-projection of $\Lambda$,
\[tb(\Lambda)=\text{writhe}(\Lambda)-\frac12(\text{number of
  cusps}).\] The {\bf rotation number} $r(\Lambda)$ of an oriented
Legendrian knot $\Lambda$ is the rotation of its tangent vector field
with respect to any global trivialization. (This definition agrees
with the definition of the rotation number of a path given earlier.)
Given the $xz$-projection of $\Lambda$,
\[r(\Lambda)=\frac12(\text{number of down cusps}-\text{number of up
  cusps}).\] Given a Legendrian link
$\Lambda=\Lambda_1\coprod\cdots\coprod\Lambda_n$, we define
$tb_i=tb(\Lambda_i)$ and $r_i=r(\Lambda_i)$ for $1\leq i\leq n$ and
define
\[r(\Lambda)=\gcd(r_1,\ldots,r_n).\]

\subsection{Satellites, the DGA, and augmentations in
  $\reals^3$}\label{sec:satellite}

This section gives the results and notation for Legendrian links in
$\reals^3$ necessary to prove Theorem~\ref{thm:main}.

We will first extend the idea of satelliting a knot in $J^1(S^1)$ to
an unknot (see \cite{NgSatellites}) to satelliting each $1$-handle of
a knot in $\#^k(S^1\times S^2)$ around a twice stabilized unknot.

\begin{defn}
  Given the $xy$- or $xz$-diagram for a Legendrian link $\Lambda$ in
  $\#^k(S^1\times S^2)$, {\bf satellited $\Lambda$} is denoted
  $S(\Lambda)$, the $xy$-diagram of which is depicted in
  Figure~\ref{fig:exSatellitedxy} and the $xz$-diagram of a Legendrian
  isotopic link of which is depicted in
  Figure~\ref{fig:exSatellitedxz} for the Legendrian link from
  Figure~\ref{fig:example}. Label the crossings as indicated, where
  $i\leq j$ and label the base points in $S(\Lambda)$ as they are
  labeled in $\Lambda$. Note that the $xy$- or $xz$-diagram of
  $\Lambda$ defines $S(\Lambda)$ up to Legendrian isotopy.
\end{defn}

\begin{figure}
  \labellist
  \small
  \pinlabel $1$ [b] at 750 1954
  \pinlabel $2$ [b] at 750 1889
  \pinlabel $3$ [b] at 750 1824
  \pinlabel $4$ [b] at 750 1758

  \pinlabel $\bar1$ [b] at 750 1488
  \pinlabel $\bar2$ [b] at 750 1423

  \pinlabel $1$ [b] at 1708 1963
  \pinlabel $2$ [b] at 1708 1898
  \pinlabel $3$ [b] at 1708 1829
  \pinlabel $4$ [b] at 1708 1767

  \pinlabel $\bar1$ [b] at 1708 1498
  \pinlabel $\bar2$ [b] at 1708 1432

  \pinlabel $t_1$ [b] at 879 1993
  \pinlabel $t_2$ [B] at 879 1840
  \pinlabel $t_3$ [t] at 878 1755

  \pinlabel $a_1$ [b] at 1129 1975
  \pinlabel $a_2$ [b] at 1321 1975
  \pinlabel $a_3$ [b] at 1201 1784
  \pinlabel $a_4$ [b] at 1430 1929
  \pinlabel $a_5$ [b] at 1490 1998
  \pinlabel $a_6$ [b] at 1144 1618
  \pinlabel $a_7$ [b] at 1212 1653
  \pinlabel $a_8$ [b] at 1200 1589 
  \pinlabel $a_9$ [b] at 1273 1625

  \pinlabel $b_{ij}$ at 1764 1660
  \pinlabel $c_{ij}$ at 2043 1665
  \pinlabel $\bar{b}_{12}$ [tr] at 1741 1431
  \pinlabel $\bar{c}_{12}$ [tl] at 1804 1432

  \pinlabel $d_{ji}$ [l] at 2748 1571
  \pinlabel $e_{ij}$ [r] at 2254 1100
  \pinlabel $f_{ji}$ [l] at 2720 712
  \pinlabel $g_{ij}$ [r] at 7 699
  \pinlabel $h_{ji}$ [l] at 450 1131
  \pinlabel $q_{ij}$ [r] at 8 1592

  \pinlabel $\bar{d}_{ji}$ [l] at 2120 1307
  \pinlabel $\bar{e}_{12}$ [r] at 1825 1116
  \pinlabel $\bar{f}_{ji}$ [l] at 2118 934
  \pinlabel $\bar{g}_{12}$ [r] at 640 937
  \pinlabel $\bar{h}_{ji}$ [l] at 899 1108
  \pinlabel $\bar{q}_{12}$ [r] at 622 1329
  \endlabellist
  \includegraphics[width=\textwidth]{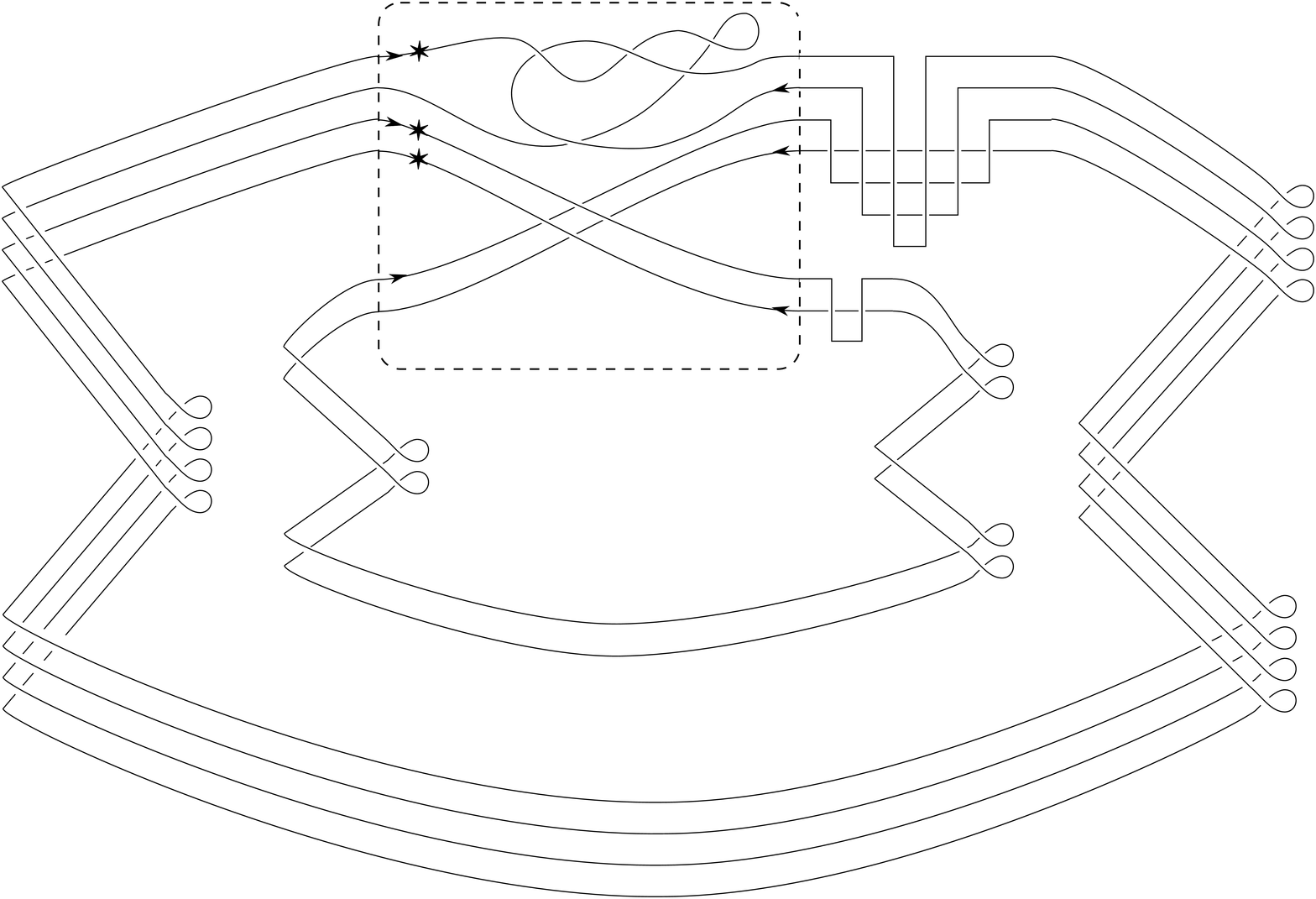}
  \caption{The $xy$-projection of the satellited link
    $S(\Lambda)$. The crossings in the $c_{ij}$-, $b_{ij}$-,
    $\bar{c}_{ij}$, and $\bar{b}_{ij}$-lattices are labeled as in
    Figure~\ref{fig:dipLabels}. The crossings in the
    $d,e,f,g,h,q$-lattices are labelled according to
    Figure~\ref{fig:satelliteLabels}.}
  \label{fig:exSatellitedxy}
\end{figure}

\begin{figure}
  \labellist
  \small

  \pinlabel $1$ [b] at 196 381
  \pinlabel $2$ [b] at 255 381
  \pinlabel $3$ [b] at 312 381
  \pinlabel $4$ [b] at 372 381

  \pinlabel $1$ [t] at 395 7
  \pinlabel $2$ [t] at 329 7
  \pinlabel $3$ [t] at 262 7
  \pinlabel $4$ [t] at 195 7

  \pinlabel $e_{12}$ [b] at 142 268
  \pinlabel $e_{13}$ [b] at 175 237
  \pinlabel $e_{14}$ [b] at 208 205
  \pinlabel $e_{23}$ [b] at 143 201
  \pinlabel $e_{24}$ [b] at 175 170
  \pinlabel $e_{34}$ [b] at 144 136

  \pinlabel $1$ [b] at 892 381
  \pinlabel $2$ [b] at 840 381
  \pinlabel $3$ [b] at 788 381
  \pinlabel $4$ [b] at 738 381

  \pinlabel $1$ [t] at 724 7
  \pinlabel $2$ [t] at 780 7
  \pinlabel $3$ [t] at 837 7
  \pinlabel $4$ [t] at 893 7

  \pinlabel $d_{11}$ [b] at 971 303
  \pinlabel $d_{21}$ [b] at 937 268
  \pinlabel $d_{31}$ [b] at 910 238
  \pinlabel $d_{41}$ [b] at 883 207
  \pinlabel $d_{22}$ [b] at 970 237
  \pinlabel $d_{32}$ [b] at 937 203
  \pinlabel $d_{42}$ [b] at 910 170
  \pinlabel $d_{33}$ [b] at 971 171
  \pinlabel $d_{43}$ [b] at 937 137
  \pinlabel $d_{44}$ [b] at 968 107
  \endlabellist
  \includegraphics[width=.8\textwidth]{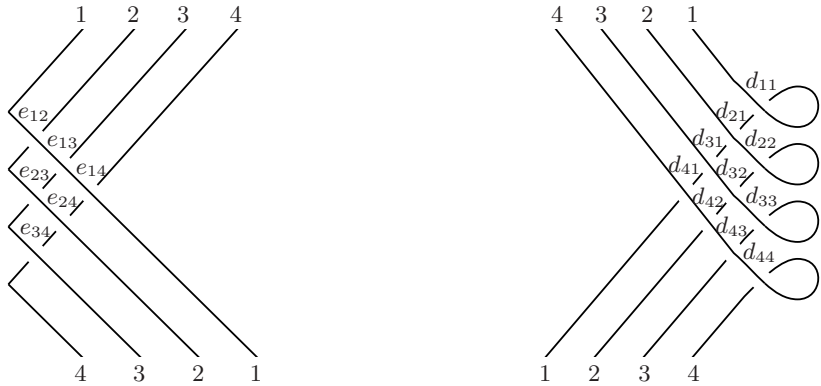}
  \caption{The labels for the crossings in the $e$- and $d$-lattices
    of the satellited link $S(\Lambda)$ as seen in
    Figure~\ref{fig:exSatellitedxy}. The $f$- and $h$-lattices are
    analogous to the $d$-lattice. The $g$- and $q$-lattices are
    analogous to the $e$-lattice.}
  \label{fig:satelliteLabels}
\end{figure}

\begin{figure}
  \labellist
  \small
  \pinlabel $1$ [B] at 543 824
  \pinlabel $2$ [B] at 543 801
  \pinlabel $3$ [B] at 543 779
  \pinlabel $4$ [B] at 543 757

  \pinlabel $1'$ [B] at 543 658
  \pinlabel $2'$ [B] at 543 628

  \pinlabel $1$ [B] at 1210 825
  \pinlabel $2$ [B] at 1210 802
  \pinlabel $3$ [B] at 1210 780
  \pinlabel $4$ [B] at 1210 757

  \pinlabel $1'$ [B] at 1210 664
  \pinlabel $2'$ [B] at 1210 633

  \pinlabel $a_1$ [b] at 833 888
  \pinlabel $a_2$ [b] at 927 891
  \pinlabel $a_3$ [b] at 865 825
  \pinlabel $a_4$ [b] at 989 855
  \pinlabel $a_5$ [l] at 1074 891
  \pinlabel $a_6$ [b] at 809 710
  \pinlabel $a_7$ [b] at 874 724
  \pinlabel $a_8$ [b] at 874 696
  \pinlabel $a_9$ [b] at 939 710

  \pinlabel $d_{ji}$ [l] at 1737 653
  \pinlabel $e_{ij}$ [r] at 1342 433
  \pinlabel $f_{ji}$ [l] at 1736 232
  \pinlabel $g_{ij}$ [r] at 8 232
  \pinlabel $h_{ji}$ [l] at 393 434
  \pinlabel $q_{ij}$ [r] at 8 652

  \pinlabel $d'_{ji}$ [l] at 1342 541
  \pinlabel $e'_{ij}$ [r] at 1153 431
  \pinlabel $f'_{ji}$ [l] at 1342 318
  \pinlabel $g'_{ij}$ [r] at 402 321
  \pinlabel $h'_{ji}$ [l] at 588 430
  \pinlabel $q'_{ij}$ [r] at 402 542

  \endlabellist
  \includegraphics[width=\textwidth]{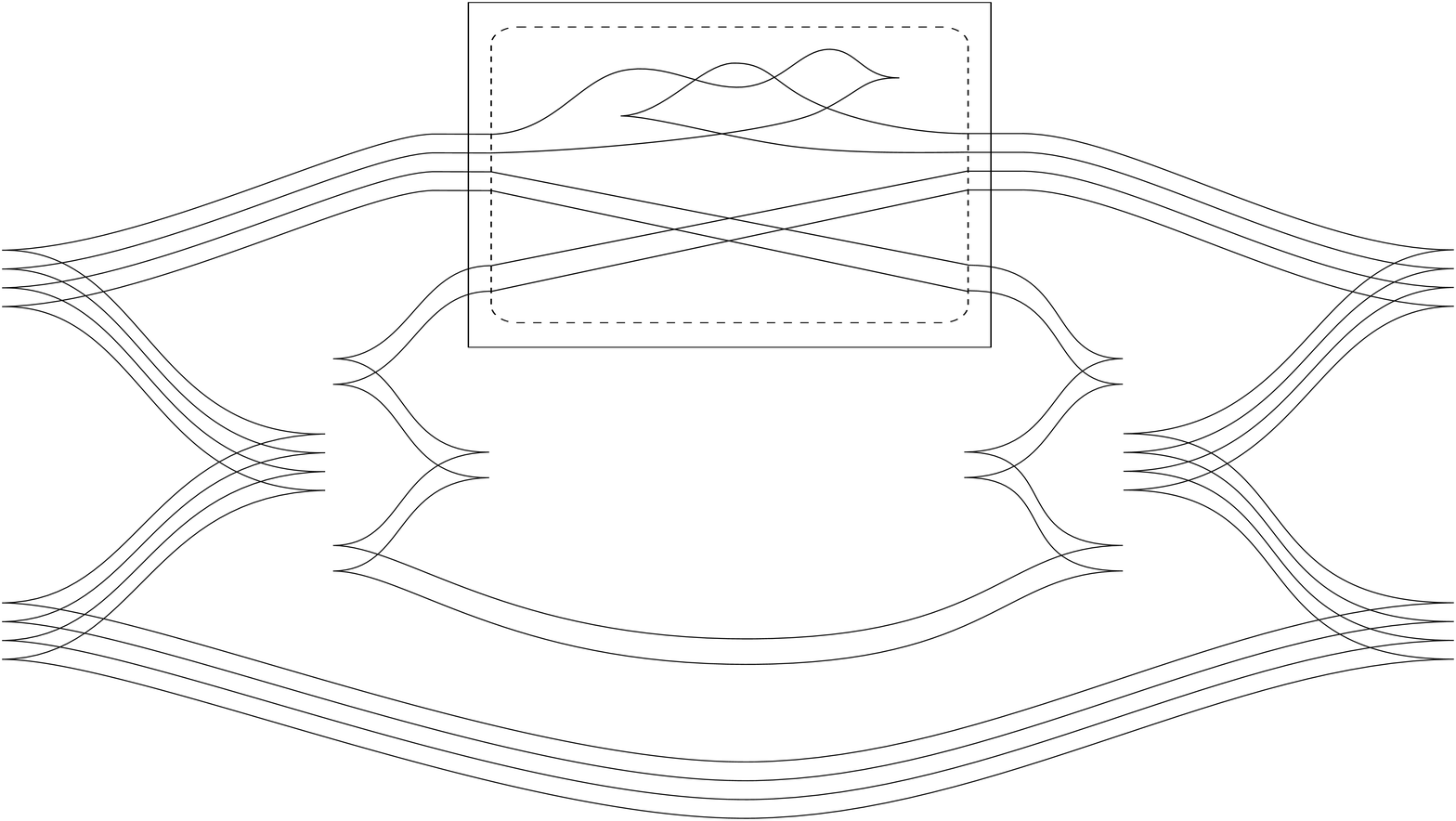}
  \caption{The $xz$-projection of a link which is Legendrian isotopic
    to the satellited link $S(\Lambda)$.}
  \label{fig:exSatellitedxz}
\end{figure}

\begin{rmk}
  The Chekanov-Eliashberg DGA was originally defined on Legendrian
  links in $(\reals^3,dz-ydx)$ (see
  \cite{Chekanov},\cite{SabloffAug}). Note that the same DGA results
  from defining the DGA as we did in $\#^k(S^1\times S^2)$ where
  $k=0$.
\end{rmk}

\subsection{Dips} \label{sec:dips}
Dips will be defined analogously to those defined in \cite{Leverson}.

Given a diagram $\pi_{xy}(\Lambda)$ in normal form which is the result
of resolution, we construct a {\bf dip} in the vertical slice of the
diagram between two crossings, a crossing and a cusp, or two cusps, by
a sequence of Reidemeister II moves, as seen in Figure~\ref{fig:dips}
in the $xz$-projection and $xy$-projection. From the $xz$-projection,
it is clear that the diagram with the dip is Legendrian isotopic to
the original diagram. To construct a dip, number the $N$ strands from
top to bottom. Using a type II Reidemeister move, push strand $N-1$ over
strand $N$, then strand $N-2$ over strand $N-1$, then strand $N-2$ over
strand $N$, and so on. In this way, strand $i$ is pushed over strand
$j$ in anti-lexicographic order.

\begin{figure}
  \labellist
  \small\hair 1pt
  \pinlabel $4$ [r] at 320 99
  \pinlabel $3$ [r] at 320 132
  \pinlabel $2$ [r] at 320 164
  \pinlabel $1$ [r] at 320 198

  \pinlabel $b_{14}$ [tr] at 425 99
  \pinlabel $b_{13}$ [tr] at 425 67
  \pinlabel $b_{12}$ [tr] at 425 35
  \pinlabel $b_{24}$ [tr] at 393 99
  \pinlabel $b_{23}$ [tr] at 393 67
  \pinlabel $b_{34}$ [tr] at 362 99

  \pinlabel $c_{14}$ [tl] at 472 99
  \pinlabel $c_{13}$ [tl] at 472 67
  \pinlabel $c_{12}$ [tl] at 472 35
  \pinlabel $c_{24}$ [tl] at 505 99
  \pinlabel $c_{23}$ [tl] at 505 67
  \pinlabel $c_{34}$ [tl] at 537 99

  \endlabellist

  \includegraphics[width=4in]{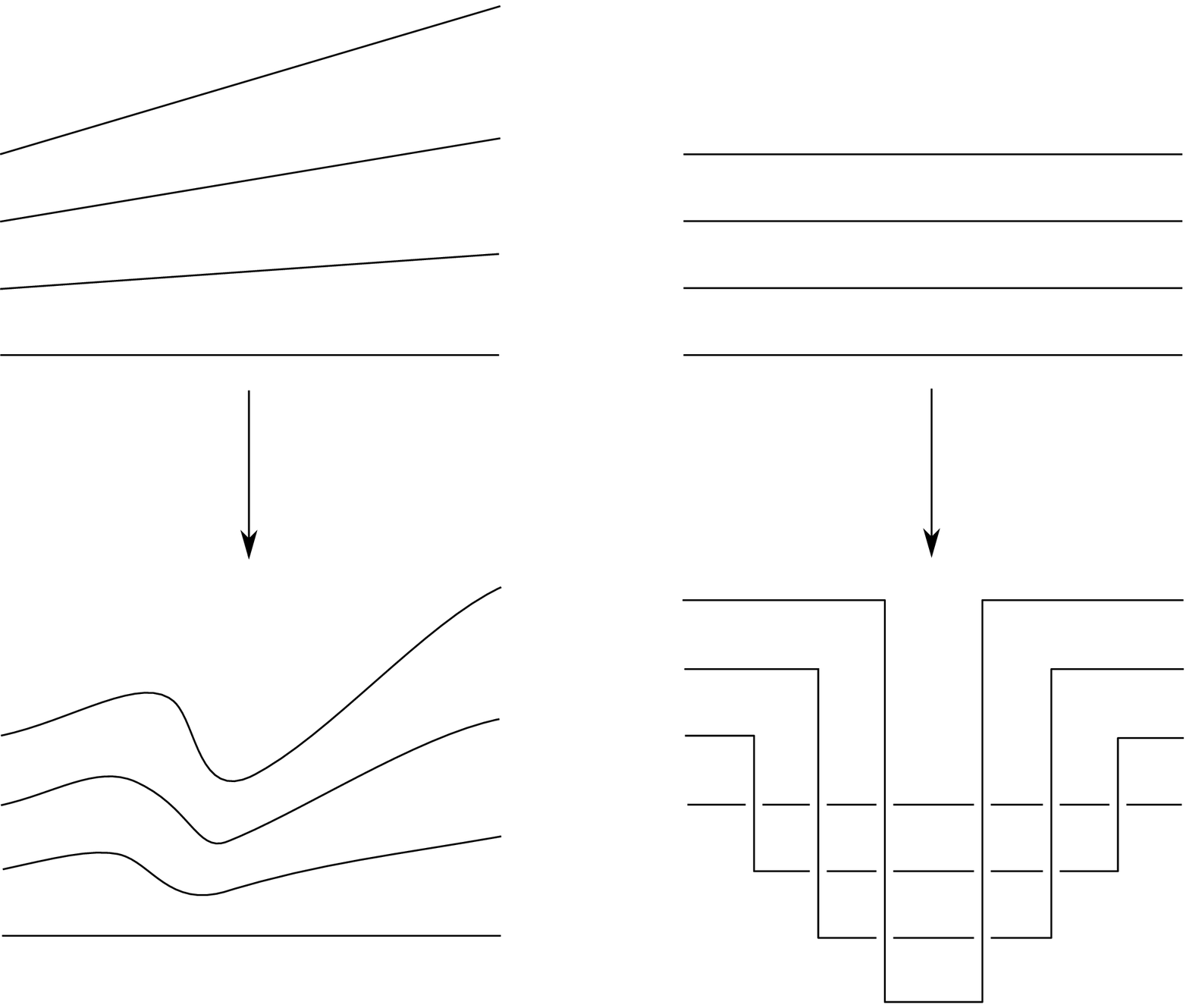}
  \caption{The left diagram gives the modification of the $xz$-diagram
    when creating a dip. The right diagram gives the modification of
    the $xy$-diagram. (This figure is taken from \cite{Leverson}.)}
  \label{fig:dips}
\end{figure}

Given an $xy$-diagram for a link $\Lambda\subset\reals^3$ in normal
form, where all crossings and resolutions of left cusps having
distinct $x$-coordinates, the {\bf dipped diagram} $D(\Lambda)$ is the
result of adding a dip between each pair of crossings or resolution of
a cusp and crossing. For each Reidemeister II move, we have two new
generators. Call the left crossing $b_{ij}$ and the right crossing
$c_{ij}$ if strands $i<j$ cross. One can check that $\lvert
b_{ij}\rvert=m(j)-m(i)$ and since $\partial$ lowers degree by $1$, we
know $\lvert c_{ij}\rvert=\lvert b_{ij}\rvert-1$.

While dipped diagrams have many more crossings than the original link
diagram, the differential $\partial$ on $\A(D(\Lambda))$ is generally
much simpler. In fact, a {\bf totally augmented disk} (a disk from the
definition of the differential of the DGA where all crossings at
corners are augmented), cannot ``go through'' or ``span'' more than
one dip.

\subsection{Augmentations before and after base points and type II
  moves} \label{sec:basePointChanges}

In some cases, we will find that adding base points will simplify the
signs. For Legendrian links in $\reals^3$, Ng and Rutherford give the
DGA isomorphisms induced by adding a base point to a diagram and by
moving a base point around a link in \cite{NgSatellites}. One can
easily extend their results to $\#^k(S^1\times S^2)$. In the case
where a base point is pushed through a crossing $c_i$, the DGA
isomorphism sends $c_i$ to $t_j^{\pm1}c_i$, the sign depending on
whether the base point is pushed along the link with or against the
orientation of the strand, and preserves $c_j$ if no base point is
pushed through $c_j$. If a base point $*_i$ corresponding to $t_i$ is
added next to a base point $*$ corresponding to $t$, then the DGA
homomorphism sends $t$ to $tt_i^{-1}$. Given an augmentation of the DGA of
the diagram before either operation, this DGA isomorphism clearly
gives us an augmentation of the DGA of the new diagram.

\begin{rmk}
  In summary, if we have an augmentation $\epsilon:\A\to F$ with
  $\epsilon(t_i)=-1$, then moving the base point $*_i$ through a
  crossing $c_j$ only changes the augmentation by changing the sign of
  the augmentation on the crossing $c_j$. Suppose we have a diagram
  with a base point $*$ corresponding to $t$ and the same diagram with
  base points $*_1,\ldots,*_s$ associated to $t_1,\ldots, t_s$ on the
  same component of the link and we move all of the base points
  $*_1,\ldots,*_s$ to the location of $*$. By the above results, if
  $\epsilon$ is an augmentation to $F$ of the multiple base point
  diagram, there exists an augmentation $\epsilon'$ to $F$ of the
  single base point diagram such that for all crossings $c$ there
  exists $x_c\in F$ such that $\epsilon'(c)=x_c\epsilon(c)$ and
  \[\epsilon'(t)=\epsilon(t_1\ldots t_s)=\prod_{i=1}^s\epsilon(t_i).\]
\end{rmk}

In \cite{EtnyreInvariants}, Etnyre, Ng, and Sabloff give a DGA
isomorphism relating the DGA of a diagram of a Legendrian knot in
$\reals^3$ before and after a Reidemeister II move. One can easily
extend this to a similar result for $\#^k(S^1\times S^2)$, which gives
a way to extend an augmentation of the diagram before a Reidemeister
II move to an augmentation of the diagram after the move, (see
\cite{Leverson} for the analogous result in $\reals^3$).

\section{Correspondence between augmentations and normal rulings for
  links in $\reals^3$}\label{sec:correspLinks}

From \cite{Leverson}, we have the following result for \emph{knots} in
$\reals^3$.

\begin{thm}[\cite{Leverson} Theorem 1.1]
  Let $\Lambda$ be a Legendrian knot in $\reals^3$. Given a field $F$,
  $(\A,\partial)$ has a $\rho$-graded augmentation $\epsilon:\A\to F$
  if and only if any front diagram of $\Lambda$ has a $\rho$-graded
  normal ruling. Furthermore, if $\rho$ is even, then
  $\epsilon(t)=-1$.
\end{thm}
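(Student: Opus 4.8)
The plan is to prove both implications using the dipped diagram $D(\Lambda)$ of \S\ref{sec:dips}, exploiting the fact recorded there that a totally augmented disk cannot span more than one dip. This ``single-dip property'' localizes every verification to one dip, crossing, or cusp at a time. Since the DGA of $\Lambda$ and that of $D(\Lambda)$ are related by stable tame isomorphism, together with the base-point and Reidemeister~II maps of \S\ref{sec:basePointChanges}, it suffices to produce a $\rho$-graded augmentation on whichever of the two diagrams is convenient; such isomorphisms preserve both the property of being $\rho$-graded and the value $\epsilon(t)$, and existence of a normal ruling does not depend on the chosen front diagram, so ``any'' and ``some'' coincide throughout.

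For the direction from an augmentation $\epsilon\colon\A(D(\Lambda))\to F$ to a normal ruling, I would, for each slice $I_m$ of the dipped diagram, define an involution $\sigma_m$ by declaring $\sigma_m(i)=j$ exactly when the dip generator $c^0_{ij}$ immediately to the left of $I_m$ is augmented. Applying $\epsilon\circ\partial=0$ to the generators $c^0_{ij}$ and to the generators resolving cusps forces $\sigma_m$ to be a fixed-point-free involution and forces the cusp and crossing transition rules of Definition~\ref{defn:normalRuling}; the sign pattern of $\partial$ is precisely what cuts down the possibilities to the normality condition, selecting the top-row configurations of Figure~\ref{fig:config}. Because every augmented generator has degree divisible by $\rho$ and a switched crossing corresponds to an augmented crossing generator of degree $\lvert c\rvert$, the ruling so produced is $\rho$-graded.

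Conversely, given a $\rho$-graded normal ruling $\sigma$, I would build $\epsilon$ on $D(\Lambda)$ by augmenting exactly the dip crossings $c^0_{ij}$ with $\sigma(i)=j$, augmenting the switched crossings, and sending all remaining generators to $0$, with nonzero values chosen among $\pm1$ and $\pm t^{\pm1}$ to solve the resulting local equations. Checking $\epsilon\circ\partial=0$ then reduces by the single-dip property to a finite list of local identities at each dip, crossing, and cusp, after which transporting $\epsilon$ back through the stable tame isomorphism yields an augmentation of $\A(\Lambda)$ and completes the equivalence.

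For the last assertion, suppose $\rho$ is even, so every augmented generator has even degree. At a right cusp the two strands the ruling must pair bound an ``eye''; placing the base point on this eye, the generator $a$ resolving the cusp satisfies $\partial a = 1 \pm t^{\pm1}(\text{word in augmented dip generators}) + (\text{terms augmenting to }0)$, where the $t$-power records the signed base-point count $n_{*}$. Applying the even-graded augmentation collapses $\epsilon(\partial a)=0$ to $0 = 1 \pm \epsilon(t)^{\pm1}$, and fixing the sign via the even-degree orientation conventions of Figure~\ref{fig:orientation} gives $\epsilon(t)=-1$. The step I expect to be the main obstacle is exactly this sign bookkeeping, both in verifying $\epsilon\circ\partial=0$ in the ruling-to-augmentation direction and in determining that the cusp relation yields $\epsilon(t)=-1$ rather than $\epsilon(t)=+1$; over $\integers/2$ these signs are invisible and the Fuchs--Sabloff argument applies verbatim, so the genuinely new content is tracking the orientation signs $\delta_{Q,a}$ consistently over $\integers[t^{\pm1}]$ and over an arbitrary field $F$.
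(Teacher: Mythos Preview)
Your overall architecture---pass to the dipped diagram, work locally dip-by-dip, and transport back---matches the paper's method (which here is cited from \cite{Leverson} and only sketched via the link extension in the proof of Theorem~\ref{thm:linksInR}). The ruling-to-augmentation direction is essentially what the paper does, with the local recipe recorded in Figure~\ref{fig:basepoints}. However, two of your steps do not go through as written.

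\medskip

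\emph{Augmentation to ruling.} You propose to read off $\sigma_m$ by setting $\sigma_m(i)=j$ whenever $\epsilon(c^0_{ij})\neq 0$ in the $m$-th dip. For an \emph{arbitrary} augmentation of $D(\Lambda)$ this is not well defined: the relation $\epsilon\partial c^0_{ij}=0$ only says that the strictly upper-triangular matrix $C_m=(\epsilon(c^0_{ij}))$ satisfies $(\Sigma C_m)^2=0$, and such a matrix need not have at most one nonzero entry in each row and column (for instance with four strands one can have $\epsilon(c^0_{12})=\epsilon(c^0_{34})=\epsilon(c^0_{14})=1$ and the rest zero). The paper's route is different: one starts from an augmentation of the \emph{undipped} resolution and, while creating the dips left to right via Reidemeister~II, \emph{simultaneously} extends the augmentation and builds the ruling so that by construction the resulting dipped augmentation satisfies Property~(R) ($\epsilon(c^m_{ij})\neq 0$ iff $\sigma_m(i)=j$). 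Equivalently one may invoke a Barannikov-type normal form as in \S\ref{sec:solidTorus} to upper-triangularly conjugate each $C_m$ to a permutation-shaped matrix; either way, the involution is not simply read off from an arbitrary $\epsilon$.

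\medskip

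\emph{The conclusion $\epsilon(t)=-1$.} Your local argument at a single right cusp does not close. If the base point sits in the loop of the cusp $q_k$, the relevant term in $\partial q_k$ is $1\pm t^{\pm1}b^\ell_{2k-1,2k}$, so $\epsilon\partial q_k=0$ only yields $\epsilon(t)^{\pm1}\epsilon(b^\ell_{2k-1,2k})=\mp 1$; nothing forces $\epsilon(b^\ell_{2k-1,2k})=\pm1$ for a general augmentation. The paper's argument is global: after placing the extra base points dictated by Figure~\ref{fig:basepoints} (one at each right cusp, one at each switch, and one at each $-$(a) crossing), one shows by induction across the dips that
\[
\prod_{i<j\text{ paired at }x_\ell}\epsilon(b^\ell_{ij})^{\pm1}=1
\]
at every slice, so the cusp relations combine to give $\epsilon(t_1\cdots t_s)=(-1)^s$. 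For a knot, the parity count of Lemma~\ref{lem:oddNumBasepts} with $n=1$ gives $s\equiv c+sw+a_-\equiv 1\pmod 2$, hence $\epsilon(t)=-1$. The even-$\rho$ hypothesis enters not at a single cusp but in ensuring paired strands are oppositely oriented, which is what makes each ruling disk an oriented unknot and makes the inductive product identity hold.
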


This result is proven by construction. Using the same method we can
prove an analogous result for \emph{links} in $\reals^3$. Restating
from the introduction:

\begin{thmLinksInR}
  Let $\Lambda$ be an $n$-component Legendrian link in $\reals^3$ with
  $s$ base points (at least one base point on each component). Given a
  field $F$, the Chekanov-Eliashberg DGA $(\A,\partial)$ over
  $\integers[t_1^{\pm1},\ldots,t_s^{\pm1}]$ has a $\rho$-graded
  augmentation $\epsilon:\A\to F$ if and only if a front diagram of
  $\Lambda$ has a $\rho$-graded normal ruling. Furthermore, if $\rho$
  is even, then $\epsilon(t_1\cdots t_s)=(-1)^s$.
\end{thmLinksInR}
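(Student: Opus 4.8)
The plan is to reduce Theorem~\ref{thm:linksInR} to the single-base-point knot case already established in \cite{Leverson}, by first handling extra base points and then handling the passage from knots to links. First I would deal with the multiple base points: using the DGA isomorphisms recalled in \S\ref{sec:basePointChanges}, a diagram of $\Lambda$ with $s$ base points has the same (stable tame isomorphism class of) DGA as one where all base points on a given component have been slid together. Collapsing them replaces the corresponding variables $t_{i_1},\ldots,t_{i_r}$ on a component by a single variable equal to their product, and an augmentation $\epsilon$ of the many-base-point DGA yields an augmentation $\epsilon'$ of the collapsed DGA with $\epsilon'(t)=\prod_j \epsilon(t_{i_j})$, preserving $\rho$-gradedness and the set of augmented crossings (up to units). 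Conversely an augmentation of the collapsed diagram lifts back. Since normal rulings do not see base points at all, this step shows it suffices to prove the theorem for a Legendrian link with exactly one base point per component.

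Next I would pass from links to knots. The key observation is that the constructions of \cite{Leverson} — building a normal ruling from an augmentation via the dipped diagram, and building an augmentation from a ruling — are entirely local in the front diagram (they work dip-by-dip, crossing-by-crossing, cusp-by-cusp) and never use connectedness of the underlying knot. So for the direction ruling $\Rightarrow$ augmentation, given a $\rho$-graded normal ruling of a front of the $n$-component link, I would run the same construction on the dipped diagram $D(\Lambda)$, assigning augmentation values to the dip crossings, the original crossings, and the right-cusp crossings exactly as in \cite{Leverson}, now with each $t_i$ treated as an independent variable; the verification that $\epsilon\circ\partial=0$ is checked disk-by-disk and goes through verbatim. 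For the converse, given a $\rho$-graded augmentation $\epsilon:\A\to F$, the same local analysis on $D(\Lambda)$ recovers a $\rho$-graded normal ruling: the normality conditions (conditions \ref{cond:switch}–\ref{cond:normal} of Definition~\ref{defn:normalRuling}) come out of the same combinatorics of augmented disks near a crossing, and condition \ref{cond:sameHandle} is vacuous in $\reals^3=\#^0(S^1\times S^2)$.

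It remains to establish the sign statement: if $\rho$ is even then $\epsilon(t_1\cdots t_s)=(-1)^s$. Having collapsed base points, it is enough to show that an even-graded augmentation sends the single variable $t_i$ on the $i$-th component $\Lambda_i$ to $-1$ for each $i$, and then multiply. Fix a component $\Lambda_i$; in the dipped diagram, restrict attention to a left cusp of $\Lambda_i$ and the crossing(s) of the innermost dip adjacent to it, or more simply trace the component once around. The mechanism is the one in \cite{Leverson}: the ruling pairs the two strands at a left cusp, and following the paired paths of the ruling once around the component, each time the component passes the base point $*_i$ the differential of the relevant right-cusp or dip generator picks up a factor of $t_i^{\pm1}$; setting $\epsilon\circ\partial=0$ on that generator, together with the fact that for $\rho$ even all the crossing-degree contributions vanish or cancel, forces $\epsilon(t_i)=-1$ exactly as in the knot case (this is where evenness of $\rho$ is essential — for odd $\rho$ other crossings can be augmented and $\epsilon(t_i)$ is unconstrained). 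Taking the product over $i=1,\ldots,s$ gives $\epsilon(t_1\cdots t_s)=(-1)^s$. I expect the main obstacle to be purely bookkeeping: making the ``localize the construction of \cite{Leverson}'' step rigorous requires carefully checking that no disk in the differential of $D(\Lambda)$ straddles two components in a way that breaks the dip-by-dip argument — but since a totally augmented disk cannot span more than one dip, and augmented crossings on distinct components are handled independently, this should cause no real trouble.
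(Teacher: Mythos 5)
The first two steps of your plan (collapsing base points via the isomorphisms of \S\ref{sec:basePointChanges}, and observing that the dip-by-dip constructions of \cite{Leverson} are local and never use connectedness) are essentially the paper's approach, and that part is fine. The gap is in your sign argument. You claim that after collapsing to one base point per component it suffices to show $\epsilon(t_i)=-1$ for \emph{each} component separately and then multiply. That statement is false for links: for linked components only the product $\epsilon(t_1\cdots t_s)$ is constrained, while the individual values can move in a family. The paper's own example (the augmentations of the link of Example~\ref{ex:dga}) has $\epsilon(t_2)$ a free nonzero parameter with $\epsilon(t_3)=\epsilon(t_2)^{-1}$, and via the satellite construction of \S\ref{sec:augToRuling} this produces a genuine link in $\reals^3$ carrying a graded augmentation with $\epsilon(t_i)\neq-1$ on individual components. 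The mechanism you invoke --- tracing one component around and using $\epsilon\circ\partial=0$ on its right cusps --- does not close up component by component, because the differentials of the right cusps of $\Lambda_i$ involve dip crossings $b_{jk}$ whose strands belong to \emph{other} components, and those factors only cancel when you take the product over \emph{all} right cusps of the link at once.

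The paper's proof of the sign statement is therefore genuinely global: it multiplies the relations coming from $\partial q_k$ over all $m$ right cusps of the link, uses the inductive identity $\prod\bigl(\epsilon(b^k_{ij})^{\pm1}\bigr)=1$ over all ruling-paired strands in each slice, and then invokes the parity Lemma~\ref{lem:oddNumBasepts}, $c+sw+a_-\equiv n\pmod 2$, to identify the resulting sign $(-1)^s$ with $(-1)^n$ (here $s$ is the number of base points placed by the construction, which equals $c+sw+a_-$). Nothing in your write-up supplies a substitute for this global bookkeeping; as written, the claim ``each $t_i\mapsto-1$'' would, if true, also force it in $\#^k(S^1\times S^2)$ by Theorem~\ref{thm:main}, contradicting the example. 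Separately, even granting your claim you would obtain $(-1)^n$ rather than $(-1)^s$, so you would still need the congruence $s\equiv n\pmod 2$ for the base points you actually use, which is exactly what Lemma~\ref{lem:oddNumBasepts} provides. You need to replace the component-by-component argument with the global product over all right cusps together with that parity lemma.
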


The following result will be necessary for the proof of
Theorem~\ref{thm:linksInR}. Analogous to the knot case in $\reals^3$,
we have the following extension of Lemma~3.2 from (\cite{Leverson}):
\begin{lem}\label{lem:oddNumBasepts}
  If $c$ gives the number of right cusps, $sw$ is the number of
  switches in the ruling, $a_-$ is the number of $-$(a) crossings, and
  $n$ the number of components then
  \[c+sw+a_-\equiv n\mod2.\]
\end{lem}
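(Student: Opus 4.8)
The plan is to prove the congruence $c + sw + a_- \equiv n \pmod 2$ by a parity-counting argument that tracks how the normal ruling decomposes the front diagram into closed loops, mirroring the proof of Lemma~3.2 in \cite{Leverson} but accounting for the $n$-component setting. First I would recall that a normal ruling pairs up the strands of $\pi_{xz}(\Lambda)$ into ``ruling paths'' that meet only at cusps, so the union of the two companion paths through each right cusp, glued along switches, organizes the front into a disjoint collection of closed ``ruling circles.'' A key structural observation, exactly as in the $\reals^3$ knot case, is that each ruling circle bounds an immersed disk whose Euler-characteristic/boundary data forces a local parity relation: traversing a single ruling circle, the number of right cusps on it, the number of switches on it, and the number of $-$(a)-crossings on it (crossings where the ruling has configuration (a) of Figure~\ref{fig:config} and the over/under strands are the two lower paired strands) sum to an even number. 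This is the single-component analogue of the statement, and it is essentially Lemma~3.2 of \cite{Leverson} applied circle-by-circle.

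Next I would count how many ruling circles arise. The normality condition together with condition~\ref{cond:sameHandle} guarantees the ruling is genuinely a pairing of strands that is constant from $x=0$ to $x=A$ (strand $i$ paired with $\sigma_0(i)$ at $x=0$ iff at $x=A$), so each ruling circle lives entirely within the front of a single link component $\Lambda_j$ — it cannot jump between components. Moreover, restricting the ruling to a single component $\Lambda_j$ gives a normal ruling of that component's front (which has no strands at $x=0,A$ once we are in $\reals^3$, or which closes up through handles but stays within $\Lambda_j$), and the number of ruling circles of $\Lambda_j$ has a fixed parity: summing the per-circle parity relation over all circles of $\Lambda_j$ yields $c_j + sw_j + (a_-)_j \equiv (\text{number of ruling circles of }\Lambda_j) \pmod 2$. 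The final step is to show that the number of ruling circles of a single connected component is \emph{odd}. This follows because a connected front diagram with its ruling, when one collapses each ruling pair, retracts onto something with the homotopy type making the circle count odd — concretely, a front for a knot with $\kappa$ right cusps has exactly $\kappa$ ruling disks, and the closed surface obtained by capping them has Euler characteristic forcing an odd total; alternatively one argues directly that the ruling circles of a connected component, viewed as a $1$-manifold obtained by resolving all non-switched crossings of the front and deleting switched ones, has an odd number of components because the front of a single knot is a single closed curve and each right cusp splits off exactly one extra circle modulo the switch/normality bookkeeping. Summing over $j=1,\dots,n$ then gives $c + sw + a_- \equiv n \pmod 2$.

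The main obstacle I anticipate is the last step: rigorously pinning down that a single connected component contributes an \emph{odd} number of ruling circles, and in particular making precise how switched crossings, the three normality configurations, and the $a_-$ bookkeeping interact with the circle count. In \cite{Leverson} this is handled for a knot by an explicit Morse-theoretic/generating-family argument (the ruling decomposes the generating family, and the count of right cusps equals the count of index-pattern changes), and the cleanest route here is probably to invoke that argument verbatim on each component — since restricting a normal ruling of $\Lambda$ to a single component $\Lambda_j$ produces an honest normal ruling of $\Lambda_j$, and condition~\ref{cond:sameHandle} ensures nothing leaks across the $1$-handles between distinct components. So the heart of the write-up is: (i) observe the ruling restricts componentwise, (ii) apply the $n=1$ case of the statement (i.e.\ Lemma~3.2 of \cite{Leverson}, suitably extended to allow the component to pass through $1$-handles) to each $\Lambda_j$ to get $c_j + sw_j + (a_-)_j \equiv 1 \pmod 2$, and (iii) sum. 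I would expect the bulk of the remaining text to be verifying that the presence of $1$-handles does not disturb the parity count on a single component — which is immediate from the fact, noted after Definition~\ref{defn:normalRuling}, that the only new condition is condition~\ref{cond:sameHandle}, and it only restricts which pairings are allowed without changing how ruling circles close up.
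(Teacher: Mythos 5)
Your route is genuinely different from the paper's, and it has gaps that I do not think can be repaired as written. The paper's proof is a short bookkeeping argument with classical invariants: one adds the four congruences $\sum_i tb_i+\sum_i r_i\equiv n$, $\ \sum_i tb_i\equiv c+cr$, $\ cr\equiv sw$, and $\sum_i r_i\equiv a_-$ (all $\bmod\ 2$, with $cr$ the total number of crossings), each of which is standard from the knot case, and everything cancels except $c+sw+a_-\equiv n$.

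The central problem with your argument is step (iii). The ruling circles of a connected front are in bijection with its right cusps --- each pair of ruling paths is born at exactly one left cusp and dies at exactly one right cusp --- so a single component contributes $c_j$ circles, and this is even as often as it is odd: the maximal-$tb$ trefoil front has two right cusps and hence two ruling disks. Your claimed local relation is also internally inconsistent with how you use it: if every circle carried an even count, summing over circles would give $c_j+sw_j+(a_-)_j\equiv 0$, not $\equiv\#\{\text{circles}\}$. Worse, a switched crossing lies on the boundaries of \emph{two} ruling disks (its two crossing strands belong to two distinct pairs, by the condition $\sigma_{m-1}(\ell)\neq\ell+1$), so the naive sum counts each switch twice and the $sw$ term drops out $\bmod\ 2$. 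Finally, the componentwise reduction in steps (i)--(ii) fails because a switch can occur at a crossing between strands of two \emph{different} components (e.g.\ the clasp crossings of a Hopf link front), so $sw$ and $a_-$ do not split as sums of per-component quantities and the restriction of the ruling to one component need not be a normal ruling of that component's front in isolation. (The discussion of condition~\ref{cond:sameHandle} and $1$-handles is in any case beside the point here: this lemma concerns links in $\reals^3$.)
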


\begin{proof}
  As in the knot case, one can easily show each of the following
  statements:
  \begin{align}
    &\sum_{i=1}^ntb_i+\sum_{i=1}^nr_i\equiv n\mod2\label{state1}\\
    &\sum_{i=1}^ntb_i\equiv c+cr\mod2\label{state2}\\
    &cr\equiv sw\mod2\label{state3}\\
    &\sum_{i=1}^nr_i\equiv a_-\mod2\label{state4}
  \end{align}
  where $r_i$ is the rotation number of $\Lambda_i$ and $cr$ is the
  number of crossings. Note that if we add these four equations
  together, we get that
  \[c+sw+a_-\equiv n\mod2\] as desired.
\end{proof}

\begin{proof}[Proof of Theorem~\ref{thm:linksInR}]
  After a series of Legendrian isotopies, we can assume the front
  diagram of $\Lambda$ has the following form where from left to right
  (lowest $x$-coordinate to highest $x$-coordinate) we have: all left
  cusps have the same $x$-coordinate, no two crossings of $\Lambda$
  have the same $x$-coordinate, and all right cusps have the same
  $x$-coordinate (in \cite{Leverson}, this is called plat
  position). Label the crossings in the right cusps by
  $q_1,\ldots,q_m$ from top to bottom and label the other crossings
  by $c_1,\ldots,c_\ell$ from left to right.

  (Augmentation to ruling) Given a $\rho$-graded augmentation of the
  Chekanov-Eliashberg DGA of the resolution of $\pi_{xz}(\Lambda)$ to
  a Lagrangian diagram. Define a $\rho$-graded normal ruling of
  $\pi_{xz}(\Lambda)$ by simultaneously defining a $\rho$-graded
  augmentation of the dipped diagram $D(\Lambda)$ as in the knot case,
  using Figure~\ref{fig:basepoints}.

  (Ruling to augmentation) Given a $\rho$-graded normal ruling of
  $\pi_{xz}(\Lambda)$. Define a $\rho$-graded augmentation of the
  dipped diagram $D(\Lambda)$ with base points where specified in
  Figure~\ref{fig:basepoints} and at each right cusps as in the knot
  case, using Figure~\ref{fig:basepoints}.

  Using Lemma~\ref{lem:oddNumBasepts} and the methods in the proof of
  Theorem~3.1 in \cite{Leverson}, one can show the final statement of
  Theorem~\ref{thm:linksInR}. Given a $\rho$-graded augmentation
  $\epsilon:\A\to F$, consider the associated $\rho$-graded normal
  ruling. If $\rho$ is even, then the ruling is only switched at
  crossings $c_k$ with $\rho\big\vert\lvert c_k\rvert$ and so
  $2\big\vert\lvert c_k\rvert$. Thus, any strands paired by the ruling
  must have opposite orientation. As in the case of knots, this
  implies that near a crossing where the ruling is switched the
  crossing must be a positive crossing. Thus each ruling path is an
  oriented unknot.

  If we consider the dipped diagram of the link, by induction we can
  show that
  \[\prod\left(\epsilon(b^k_{ij})^{\pm1}\right)=1,\]
  where the product is taken over all paired strands $i$ and $j$ in
  the ruling between $c_k$ and $c_{k+1}$ and the sign is determined by
  the orientation of the paired strands as in \cite{Leverson}. By
  considering $\partial q_k$, we see that
  \begin{align*}
    \epsilon(t_1\cdots t_s)&=(-1)^{s-m}\prod_{k=1}^m\left(-(\epsilon(b^\ell_{2k,2k-1}))^{\pm1}\right)\\
    &=(-1)^s\prod_{i<j\text{ paired}}\left(\epsilon(b^\ell_{ij})^{\pm1}\right)\\
    &=(-1)^s\\
    &=(-1)^n
  \end{align*}
  by Lemma~\ref{lem:oddNumBasepts} and the fact that the number of
  base points $s\equiv c+sw+a_-\mod2$.
\end{proof}

\section{Augmentation to Ruling}\label{sec:augToRuling}

In this section, we will show that the DGA of a Legendrian link
$\Lambda$ in $\#^k(S^1\times S^2)$ is a subalgebra of the DGA of
satellited $\Lambda$ in $\reals^3$ and use the construction from
Theorem 1.1 \cite{Leverson} to construct a ruling of the satellited
link in $\reals^3$ to then give a normal ruling of $\Lambda$ in
$\#^k(S^1\times S^2)$. This shows the forward direction of Theorem
\ref{thm:main}.

Given an $xy$-diagram for the Legendrian link $\Lambda$ in
$\#^k(S^1\times S^2)$ which results from the resolution of an
$xz$-diagram in normal form with base points indicated. We can
construct an $xy$-diagram for $S(\Lambda)$, satellited $\Lambda$, (see
Figure~\ref{fig:exSatellitedxy}) with base points in the same location
as they were for $\Lambda$.

We will use the notation for Legendrian links in $\#^k(S^1\times S^2)$
with tildes added for the Legendrian link $\Lambda$ in $\#^k(S^1\times
S^2)$:
$\A(\Lambda)=\integers[\tilde{t}_1^{\pm1},\ldots,\tilde{t}_s^{\pm1}]\langle
\tilde{a}_i,\tilde{b}_{ij;\ell},\tilde{c}^p_{ij;\ell}\rangle$ with
differential $\tpartial$, where $1\leq\ell\leq k$, $i<j$ for all
$\tb_{ij;\ell}$, $i<j$ for $\tc^p_{ij;\ell}$ if $p=1$, and $i\leq j$
if $p>1$. We will use the notation for Legendrian links from
Figure~\ref{fig:exSatellitedxy} for $S(\Lambda)$:
\[\A(S(\Lambda))=\integers[t_1^{\pm1},\ldots,t_s^{\pm1}]\langle
a_i,b_{ij;\ell},c_{ij;\ell},d_{ji;\ell},e_{ij;\ell},f_{ji;\ell},g_{ij;\ell},h_{ji;\ell},q_{ij;\ell}\rangle\]
with differential $\partial$, where $1\leq\ell\leq k$, $1\leq i\leq m$
for $a_i$, $i<j$ for $b_{ij;\ell}$, $c_{ij;\ell}$, $e_{ij;\ell}$,
$g_{ij;\ell}$, and $q_{ij;\ell}$, and $i\leq j$ for $d_{ji;\ell}$,
$f_{ji;\ell}$, and $h_{ji;\ell}$.

Note that
\begin{align*}
  \partial
  a_i&=\tilde\partial\tilde{a}_i\rvert_{\tilde{a}_r=a_r,\tilde{c}^0_{rs;p}=q_{rs;p},\tilde{t}_r=t_r},\\
  \partial
  b_{ij;\ell}&=\tilde\partial\tilde{b}_{ij;\ell}\rvert_{\tilde{a}_r=a_r,\tilde{b}_{rs;p}=b_{rs;p},\tilde{c}^0_{rs;p}=q_{rs;p},\tilde{t}_r=t_r},
\end{align*}
and in the $p$-th $1$-handle
\begin{align*}
  \partial
  c_{ij}&=\tilde\partial\tilde{c}^0_{ij}\rvert_{\tilde{c}^0_{rs}=c_{rs}},
\end{align*}
where $1\leq i<j\leq N_p$. One can check that in the $p$-th
$1$-handle
\[\partial e_{ij}=\sum_{i<\ell<j}(-1)^{\lvert
  e_{i\ell}\rvert+1}e_{i\ell}e_{\ell
  j}=\tilde\partial\tilde{c}^0_{ij}\rvert_{\tilde{c}^0_{rs}=e_{rs}}\]
for $1\leq i<j\leq N_p$. Similarly
\begin{align*}
  \partial g_{ij}&=\tilde\partial\tilde{c}^0_{ij}\rvert_{\tilde{c}^0_{rs}=g_{rs}},\\
  \partial
  q_{ij}&=\tilde\partial\tilde{c}^0_{ij}\rvert_{\tilde{c}^0_{rs}=q_{rs}}.
\end{align*}
One can also check that
\begin{align*}
  \partial d_{ji}&=\delta_{ij}+\sum_{j<\ell\leq N_p}c_{j\ell}d_{\ell i}+\sum_{1\leq\ell<i}(-1)^{\lvert d_{j\ell}\rvert+1}d_{j\ell}e_{\ell i},\\
  \partial f_{ji}&=\delta_{ij}+\sum_{j<\ell\leq N_p}(-1)^{\lvert e_{j\ell}\rvert+1}e_{j\ell}f_{\ell i}+\sum_{1\leq\ell<i}(-1)^{\lvert f_{j\ell}\rvert+\lvert g_{\ell i}\rvert}f_{j\ell}g_{\ell i},\\
  \partial h_{ji}&=\delta_{ij}+\sum_{j<\ell\leq N_p}(-1)^{\lvert
    q_{j\ell}\rvert+1}q_{j\ell}h_{\ell
    i}+\sum_{1\leq\ell<i}(-1)^{\lvert
    h_{j\ell}\rvert+1}h_{j\ell}g_{\ell i},
\end{align*}
where $1\leq i\leq j\leq N_p$.

\begin{rmk}
  Suppose we have a Legendrian link $\Lambda$ in $\#^k(S^1\times S^2)$
  with associated DGA $(\A(\Lambda),\partial)$. If
  $(\A(S(\Lambda)),\partial)$ is the DGA associated to satellited
  $\Lambda$, then we have
  \diag{\A(S(\Lambda))\arr&\A(S(\Lambda))/B\,\ar@{^{(}->}[r]&\A(\Lambda),}
  where the final map is inclusion and
  \[B=R\langle
  c_{ij;\ell}-g_{ij;\ell},c_{ij;\ell}-q_{ij;\ell},c_{ij;\ell}-(-1)^{\lvert
    e_{ij;\ell}\rvert+1}e_{ij;\ell},h_{ji;\ell}-(-1)^{\lvert
    f_{ji;\ell}\rvert+1}f_{ji;\ell},h_{ji;\ell}-(-1)^{\lvert
    d_{ji;\ell}\rvert+1}d_{ji;\ell}\rangle.\]
\end{rmk}

Given a field $F$ and a $\rho$-graded augmentation
$\tilde\epsilon:\A(\Lambda)\to F$ we will construct a $\rho$-graded
augmentation $\epsilon:\A(S(\Lambda))\to F$. Define $\epsilon$ on the
generators of $\A(S(\Lambda))$ by
\[\epsilon(c)=\begin{cases}
  \tepsilon(\tilde{a}_i)&\text{if }c=a_i\\
  \tepsilon(\tilde{b}_{ij})&\text{if }c=b_{ij}\\
  \tepsilon(\tilde{c}^0_{ij})&\text{if }c\in\{c_{ij},g_{ij},q_{ij}\}\\
  (-1)^{\lvert\tilde{c}^0_{ij}\rvert+1}\tepsilon(\tilde{c}^0_{ij})&\text{if }c=e_{ij}\\
  \tepsilon(\tilde{c}^1_{ji})&\text{if }c=h_{ji}\\
  (-1)^{\lvert\tilde{c}^1_{ji}\rvert+1}\tepsilon(\tilde{c}^1_{ji})&\text{if }c\in\{d_{ji},f_{ji}\}\\
  \tepsilon(\tilde{t}_i)&\text{if }c=t_i
\end{cases}\] in the $\ell$-th $1$-handle.

\begin{rmk}\label{rmk:grading}
  Note that for fixed $i,j,$ and $p$,
  $\tilde{c}^0_{ij;p},c_{ij;p},d_{ji;p},e_{ij;p},f_{ji;p},g_{ij;p},h_{ji;p},$
  and $q_{ij;p}$ are either all positive crossings or all negative
  crossings. We also note that for a given $1$-handle,
  $\lvert\tilde{c}^0_{ij}\rvert\equiv\lvert\tilde{c}^1_{ji}\rvert\mod
  2$ and
  $\lvert\tilde{c}^1_{ij}\rvert\equiv\lvert\tilde{c}^1_{ji}\rvert\mod2$. Therefore,
  for a given $1$-handle, the following are all congruent mod $2$:
  \[\lvert\tilde{c}^0_{ij}\rvert,\lvert\tilde{c}^1_{ij}\rvert,\lvert\tilde{c}^1_{ji}\rvert,\lvert
  c_{ij}\rvert,\lvert d_{ji}\rvert,\lvert e_{ij}\rvert,\lvert
  f_{ji}\rvert,\lvert g_{ij}\rvert,\lvert h_{ji}\rvert,\lvert
  q_{ij}\rvert.\]
\end{rmk}

We will now check that $\epsilon$ is a $\rho$-graded augmentation of
$(\A(S(\Lambda)),\partial)$. Clearly in the $p$-th $1$-handle
\[\epsilon\partial a_r=\epsilon\partial
b_{ij}=\epsilon\partial c_{ij}=\epsilon\partial
g_{ij}=\epsilon\partial q_{ij}=0\] for all $1\leq r\leq m$ and $1\leq
i<j\leq N_p$. Note that in the $p$-th $1$-handle
\begin{align}\label{eq:gradingSum}
  \lvert\tilde{c}^0_{ij}\vert&\equiv\lvert\tilde{c}^0_{i\ell}\rvert+\lvert\tilde{c}^0_{\ell
    j}\rvert\mod2\\
  \lvert\tilde{c}^1_{ji}\rvert&\equiv\lvert\tilde{c}^1_{j\ell}\rvert+\lvert\tilde{c}^1_{\ell
    i}\rvert\mod2\nonumber
\end{align}
Given $1\leq p\leq k$ and $1\leq i<j\leq N_p$. In the $p$-th
$1$-handle:
\begin{align*}
  \epsilon\partial e_{ij}&=\sum_{i<\ell<j}(-1)^{\lvert e_{i\ell}\rvert+1}\epsilon(e_{i\ell}e_{\ell j})\\
  &=\sum_{i<\ell<j}(-1)^{\lvert e_{\ell j}\rvert+1}\tepsilon(\tilde{c}^0_{i\ell}\tilde{c}^0_{\ell j})\\
  &=\sum_{i<\ell<j}(-1)^{\lvert\tilde{c}^0_{ij}\rvert+\lvert\tilde{c}^0_{i\ell}\rvert+1}\tepsilon(\tilde{c}^0_{i\ell}\tilde{c}^0_{\ell j})\text{ by \eqref{eq:gradingSum}}\\
  &=(-1)^{\lvert\tilde{c}^0_{ij}\rvert+1}\tepsilon\partial\tilde{c}^0_{ij}\\
  &=0;\displaybreak[2]\\[.2in]
  \epsilon\partial d_{ji}&=\sum_{j<\ell\leq N_p}\epsilon(c_{j\ell}d_{\ell i})+\sum_{1\leq\ell<i}(-1)^{\lvert d_{j\ell}\rvert+1}\epsilon(d_{j\ell}e_{\ell i})\\
  &=\sum_{j<\ell\leq N_p}(-1)^{\lvert\tilde{c}^1_{\ell i}\rvert+1}\tepsilon(\tilde{c}^0_{j\ell}\tilde{c}^1_{\ell i})+\sum_{1\leq\ell<i}(-1)^{\lvert\tilde{c}^1_{\ell i}\rvert+1}\tepsilon(\tilde{c}^1_{j\ell}\tilde{c}^0_{\ell i})\text{ by Remark \ref{rmk:grading}}\\
  &=\sum_{j<\ell\leq N_p}(-1)^{\lvert\tilde{c}^1_{ji}\rvert+\lvert\tilde{c}^1_{j\ell}\rvert+1}\tepsilon(\tilde{c}^0_{j\ell}\tilde{c}^1_{\ell i})+\sum_{1\leq\ell<i}(-1)^{\lvert\tilde{c}^1_{ji}\rvert+\lvert\tilde{c}^1_{j\ell}\rvert+1}\tepsilon(\tilde{c}^1_{j\ell}\tilde{c}^0_{\ell i}) \text{ by \eqref{eq:gradingSum}}\\
  &=(-1)^{\lvert\tc^1_{ji}\rvert}\tepsilon\tpartial\tc^1_{ji} \text{ by Remark~\ref{rmk:grading}}\\
  &=0\displaybreak[2]\\[.2in]
  \epsilon\partial d_{jj}&=1+\sum_{j<\ell\leq N_p}\epsilon(c_{j\ell}d_{\ell j})+\sum_{1\leq\ell<j}(-1)^{\lvert d_{j\ell}\rvert+1}\epsilon(d_{j\ell}e_{\ell j})\\
  &=1+\sum_{j<\ell\leq N_p}(-1)^{\lvert\tc^0_{j\ell}\rvert+1}\tepsilon(\tc^0_{j\ell}\tc^p_{1\ell j})+\sum_{1\leq\ell<j}(-1)^{\vert\tc^1_{j\ell}\rvert+1}\tepsilon(\tc^1_{j\ell}\tc^0_{\ell j})\text{ by Remark~\ref{rmk:grading}}\\
  &=\tepsilon\tpartial\tc^1_{jj}\\
  &=0;\displaybreak[2]\\[.2in]
\end{align*}

Similarly one can show $\epsilon\partial f_{ij}=0$ if $i\leq j$ and
$\epsilon\partial h_{ji}=0$ if $i<j$.

(grading) If $\tepsilon$ is $\rho$-graded, we will show that
$\epsilon$ is $\rho$-graded as well. Let $m$ be the Maslov potential
used to assign the gradings of the crossings of $\Lambda$ in
$\#^k(S^1\times S^2)$. We will use $m$ to define a Maslov potential
$\mu$ on $S(\Lambda)$ in $\reals^3$ as follows: Define $\mu$ on
$T\subset S(\Lambda)$ the same as $m$ is defined on $T\subset\Lambda$
and extend $\mu$ to the rest of $S(\Lambda)$. Notice that there is
only one way to do this which keeps $\mu$ of the upper strand (higher
$z$-coordinate) entering a cusp one higher than $\mu$ of the lower
strand (lower $z$-coordinate) entering a cusp. Thus it is clear that
$\lvert \ta_i\vert=\lvert a_i\rvert$,
$\lvert\tb_{ij;\ell}\rvert=\lvert b_{ij;\ell}\rvert$, and
$\lvert\tc_{ij;\ell}\rvert=\lvert c^0_{ij;\ell}\rvert$. Properties of
the Maslov potential immediately give us
\begin{align*}
  \lvert d_{ji}\rvert=\lvert f_{ji}\rvert=\lvert h_{ji}\rvert,\quad i\leq j\\
  \lvert e_{ij}\rvert=\lvert g_{ij}\rvert=\lvert q_{ij}\rvert,\quad i<j\\
  -\lvert d_{ji}\rvert=\lvert e_{ij}\rvert,\quad i<j
\end{align*}
Therefore, it suffices to check that $\rho\big\vert\lvert\tc^0_{ij}\rvert$
if and only if $\rho\big\vert\lvert d_{ji}\rvert$ for $i<j.$

To this end, we note that
$\lvert\tb_{ij}\rvert=\lvert\tc^0_{ij}\rvert+1$ and
$\lvert\tb_{ij}\rvert=m(i)-m(j)$, so
$\lvert\tc^0_{ij}\rvert=m(i)-m(j)-1$. Thus, by the definition of
$\mu$, we have
\[\lvert d_{ji}\rvert=m(j)-(m(i)-1)=-\lvert\tc^0_{ij}\rvert.\]
So $\epsilon$ is $\rho$-graded if $\tepsilon$ is $\rho$-graded.

Thus an augmentation $\tepsilon:\A(\Lambda)\to F$ of the DGA of
$\Lambda$ in $\#^k(S^1\times S^2)$ gives an augmentation
$\epsilon:\A(S(\Lambda))\to F$ of the DGA of $S(\Lambda)$ in
$\reals^3$. By Theorem 1.1 in \cite{Leverson}, the augmentation
$\epsilon$ gives an augmentation of the DGA of $S(\Lambda)$ with dips
in $\reals^3$, which gives a normal ruling of $S(\Lambda)$ with no
dips in $\reals^3$. Clearly this normal ruling must be {\bf thin},
meaning outside of the tangle $T$ associated to $\Lambda$ the ruling
only has switches at crossings where the crossing strands go through
the same $1$-handle. By restricting the $\rho$-graded normal ruling of
$S(\Lambda)$ in $\reals^3$ to a $\rho$-graded normal ruling of $T$, we
get a $\rho$-graded normal ruling of $\Lambda$ in $\#^k(S^1\times
S^2)$.

An easy to prove corollary of this is:

\begin{cor}\label{cor:oddNumStrands}
  If $\Lambda$ is a Legendrian link in $\#^k(S^1\times S^2)$ and there
  exists $\ell$ such that $N_\ell$ is odd, then there does
  not exist a $\rho$-graded augmentation of the DGA $\A(\Lambda)$ for
  any $\rho$.
\end{cor}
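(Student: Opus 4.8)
The plan is to deduce this directly from the implication ``augmentation $\Rightarrow$ normal ruling'' of Theorem~\ref{thm:main} that was just established in this section, since a front diagram of $\Lambda$ cannot carry a normal ruling once some $N_\ell$ is odd. I would argue by contradiction: assume there exist a field $F$, a value of $\rho$, and a $\rho$-graded augmentation $\epsilon\colon\A(\Lambda)\to F$. By the construction of this section, a front diagram of $\Lambda$ in normal form then admits a $\rho$-graded normal ruling $\sigma=(\sigma_1,\ldots,\sigma_M)$.

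Next I would examine the involution $\sigma_0=\sigma_M$ that governs the strands near $x=0$. For $1\le p\le k$ set $G_p=\{\,j: \sum_{i=1}^{p-1}N_i< j\le \sum_{i=1}^{p}N_i\,\}$, the set of labels of the $N_p$ strands passing through the $p$-th $1$-handle, so that $\{1,\ldots,N\}=G_1\sqcup\cdots\sqcup G_k$ with $N=\sum_i N_i$. Condition~(\ref{cond:sameHandle}) of Definition~\ref{defn:normalRuling} is exactly the statement that $\sigma_0(G_p)=G_p$ for every $p$, so $\sigma_0$ restricts to an involution of the finite set $G_p$, and by the first condition of Definition~\ref{defn:normalRuling} this restricted involution is fixed-point-free. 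Finally I would invoke the elementary fact that a finite set admitting a fixed-point-free involution has even cardinality (all of its orbits have size $2$); taking $p=\ell$ forces $N_\ell=\lvert G_\ell\rvert$ to be even, contradicting the hypothesis. Hence no $\rho$-graded augmentation of $\A(\Lambda)$ can exist.

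I do not expect a genuine obstacle here; the only point requiring care is the translation between the global strand labeling $1,\ldots,N$ used near $x=0$ in Definition~\ref{defn:normalRuling} and the per-$1$-handle labelings $1,\ldots,N_\ell$ used to index the internal generators, which is precisely what condition~(\ref{cond:sameHandle}) encodes. One could alternatively look for a purely algebraic obstruction coming from the vanishing of $\epsilon$ on $\partial_{N_\ell}(c^p_{ij})$ in the internal DGA $(\A_{N_\ell},\partial_{N_\ell})$, but the ruling argument above is the shortest route and relies only on results already proved.
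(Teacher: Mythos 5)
Your argument is correct. It rests on the same two ingredients as the paper's proof --- the implication ``augmentation $\Rightarrow$ $\rho$-graded normal ruling'' just established, plus the elementary fact that a fixed-point-free involution forces even cardinality --- but you extract the parity obstruction at a different place. The paper never looks at the ruling of $\Lambda$ itself: it stays with the satellite $S(\Lambda)$ in $\reals^3$ and argues that every normal ruling of $S(\Lambda)$ is \emph{thin} (switches outside the tangle only pair strands passing through the same $1$-handle), and that no thin ruling of $S(\Lambda)$ can exist when some $N_\ell$ is odd. You instead push all the way down to a normal ruling of $\Lambda$ in $\#^k(S^1\times S^2)$ and read the obstruction off condition~(\ref{cond:sameHandle}) of Definition~\ref{defn:normalRuling} together with fixed-point-freeness of $\sigma_0$: each handle group $G_p$ is $\sigma_0$-invariant, so $N_p=\lvert G_p\rvert$ is even. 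Your version is slightly more self-contained, since it quotes only the definition of a normal ruling in $\#^k(S^1\times S^2)$ and does not require reopening the satellite/thinness discussion; the paper's version has the minor advantage of short-circuiting earlier (it derives the contradiction already at the level of $S(\Lambda)$, before restricting to the tangle). Both are complete, and the translation between the global labeling near $x=0$ and the per-handle labelings, which you flag as the one delicate point, is indeed exactly what condition~(\ref{cond:sameHandle}) supplies.
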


In other words, if $\Lambda$ has a $1$-handle with an odd number of
strands going through it, then there does not exist a $\rho$-graded
augmentation of the DGA $\A(\Lambda)$ for any $\rho$.

\begin{proof}
  It is clear that any normal ruling of $S(\Lambda)$ must be thin, but
  if $\Lambda$ has a $1$-handle with an odd number of strands going
  through it, then there are no thin normal rulings of $S(\Lambda)$
  and thus no normal rulings of $S(\Lambda)$. So
  Theorem~\ref{thm:main} tells us there are no $\rho$-graded
  augmentations of $\A(\Lambda)$.
\end{proof}

\section{Ruling to Augmentation}\label{sec:rulingToAug}

Let $F$ be a field. We will now prove the existence of a $\rho$-graded
normal ruling implies the existence of a $\rho$-graded augmentation,
the backward direction of Theorem~\ref{thm:main}, by constructing a
$\rho$-graded augmentation $\epsilon:\A(D(\Lambda))\to F$ given a
$\rho$-graded normal ruling of $\Lambda$ in $\#^k(S^1\times S^2)$.

Given an $xz$-diagram of a Legendrian link $\Lambda$ in
$\#^k(S^1\times S^2)$ in normal form, we will consider the resolution
to an $xy$-diagram of a Legendrian isotopic link. Using Legendrian
isotopy, we can ensure all crossings, left cusps, and right cusps have
different $x$ coordinates and all right cusps occur ``above'' (have
higher $y$ or $z$ coordinate than) the remaining strands fo the tangle
at that $x$ coordinate. Place a base point on every strand at $x=0$
and one in every loop coming from the resolution of a right cusp.

\begin{figure}
  \labellist
  \small\hair 3pt 
  \pinlabel {$-$(a)} [b] at 272 1863 
  \pinlabel $a_1$ [tl] at 147 1687 
  \pinlabel $a_2$ [tl] at 212 1750 
  \pinlabel $a$ [b] at 272 1810 
  \pinlabel $a^{-1}$ [tr] at 360 1723 
  \pinlabel $aa_1$ [tl] at 440 1687 
  \pinlabel $aa_2$ [tl] at 505 1750
  
  \pinlabel {$+$(a)} [b] at 972 1863 
  \pinlabel $a_1$ [tl] at 847 1687 
  \pinlabel $a_2$ [tl] at 912 1750 
  \pinlabel $a$ [b] at 972 1810 
  \pinlabel $a^{-1}$ [tr] at 1064 1723 
  \pinlabel $aa_1$ [tl] at 1146 1687 
  \pinlabel $aa_2$ [tl] at 1207 1750
   
  \pinlabel {$-$(b)} [b] at 277 1535 
  \pinlabel $a_1$ [tr] at 149 1424 
  \pinlabel $a_2$ [tl] at 182 1392 
  \pinlabel $a$ [b] at 275 1510 
  \pinlabel $a^{-1}a_1a_2^{-1}$ [tr] at 337 1435 
  \pinlabel $a^{-1}$ [tr] at 392 1359 
  \pinlabel $a^{-1}a_1$ [tl] at 436 1438
  \pinlabel $aa_2$ [tl] at 472 1389
   
  \pinlabel {$+$(b)} [b] at 972 1535 
  \pinlabel $a_1$ [tr] at 859 1421 
  \pinlabel $a_2$ [tl] at 881 1392 
  \pinlabel $a$ [b] at 975 1510 
  \pinlabel $a^{-1}a_1a_2^{-1}$ [tr] at 1037 1435 
  \pinlabel $a^{-1}$ [tr] at 1100 1362 
  \pinlabel $a^{-1}a_1$ [tl] at 1138 1438
  \pinlabel $aa_2$ [tl] at 1174 1389
  
  \pinlabel {$-$(c), product of signs of $a^{j-1}_{Li}$ and $a^{j-1}_{i+1,K}$ is $+1$} [b] at 272 1232 
  \pinlabel {$+$(c), product of signs of $a^{j-1}_{Li}$ and $a^{j-1}_{i+1,K}$ is
    $-1$} [b] at 272 1192 
  \pinlabel $a_1$ [tr] at 155 1084 
  \pinlabel $a_2$ [tl] at 181 1051
  \pinlabel $a$ [b] at 273 1105
  \pinlabel $a^{-1}a_1a_2^{-1}$ [tr] at 392 1025 
  \pinlabel $a^{-1}$ [tr] at 336 1084 
  \pinlabel $a^{-1}a_1$ [tl] at 436 1098
  \pinlabel $aa_2$ [tl] at 472 1051
   
  \pinlabel {$-$(c), product of signs of $a^{j-1}_{Li}$ and
    $a^{j-1}_{i+1,K}$ is $-1$} [b] at 972 1232
  \pinlabel {$+$(c), product of signs of $a^{j-1}_{Li}$ and $a^{j-1}_{i+1,K}$ is
    $+1$} [b] at 972 1192 
  \pinlabel $a_1$ [tr] at 860 1084 
  \pinlabel $a_2$ [tl] at 881 1051 
  \pinlabel $a$ [b] at 975 1105 
  \pinlabel $a^{-1}a_1a_2^{-1}$ [tr] at 1103 1025 
  \pinlabel $a^{-1}$ [tr] at 1036 1084
  \pinlabel $a^{-1}a_1$ [tl] at 1142 1098
  \pinlabel $aa_2$ [tl] at 1174 1051

  \pinlabel {(d)} [b] at 271 870 
  \pinlabel $a_1$ [tl] at 145 729
  \pinlabel $a_2$ [tl] at 179 760
  \pinlabel $a$ [b] at 273 812
  \pinlabel $a_1$ [tl] at 440 696
  \pinlabel $a_2$ [tl] at 505 760

  \pinlabel {$-$(e)} [b] at 271 543
  \pinlabel $a_1$ [tl] at 145 398
  \pinlabel $a_2$ [tl] at 177 431
  \pinlabel $a$ [b] at 275 514
  \pinlabel $aa_1^{-1}a_2$ [tr] at 360 438
  \pinlabel $a_2$ [tl] at 435 431 
  \pinlabel $a_1$ [tl] at 472 398
   
  \pinlabel {$+$(e)} [b] at 971 543 
  \pinlabel $a_1$ [tl] at 847 398
  \pinlabel $a_2$ [tl] at 879 431
  \pinlabel $a$ [b] at 975 514
  \pinlabel $aa_1^{-1}a_2$ [tr] at 1059 438
  \pinlabel $a_2$ [tl] at 1139 430
  \pinlabel $a_1$ [tl] at 1172 398

  \pinlabel {$-$(f), product of signs of $a^{j-1}_{L,i+1}$ and
    $a^{j-1}_{iK}$ is $+1$} [b] at 225 247
  \pinlabel {$+$(f), product of signs of $a^{j-1}_{L,i+1}$ and $a^{j-1}_{iK}$ is
    $-1$} [b] at 225 207
  \pinlabel $a_1$ [tl] at 147 67
  \pinlabel $a_2$ [tl] at 180 99
  \pinlabel $a$ [b] at 273 118
  \pinlabel $aa_1a_2^{-1}$ [tr] at 394 35
  \pinlabel $a_1$ [tl] at 436 99
  \pinlabel $a_2$ [tl] at 472 67
   
  \pinlabel {$-$(f), product of signs of $a^{j-1}_{L,i+1}$ and
    $a^{j-1}_{iK}$ is $-1$} [b] at 1018 247
  \pinlabel {$+$(f), product of signs of $a^{j-1}_{L,i+1}$ and $a^{j-1}_{iK}$ is
    $+1$} [b] at 1018 207 
  \pinlabel $a_1$ [tl] at 847 75 
  \pinlabel $a_2$ [tl] at 880 110
  \pinlabel $a$ [b] at 980 125
  \pinlabel $aa_1a_2^{-1}$ [tr] at 1105 45
  \pinlabel $a_1$ [tl] at 1140 110
  \pinlabel $a_2$ [tl] at 1173 72

  \endlabellist
  \includegraphics[width=.9\textwidth]{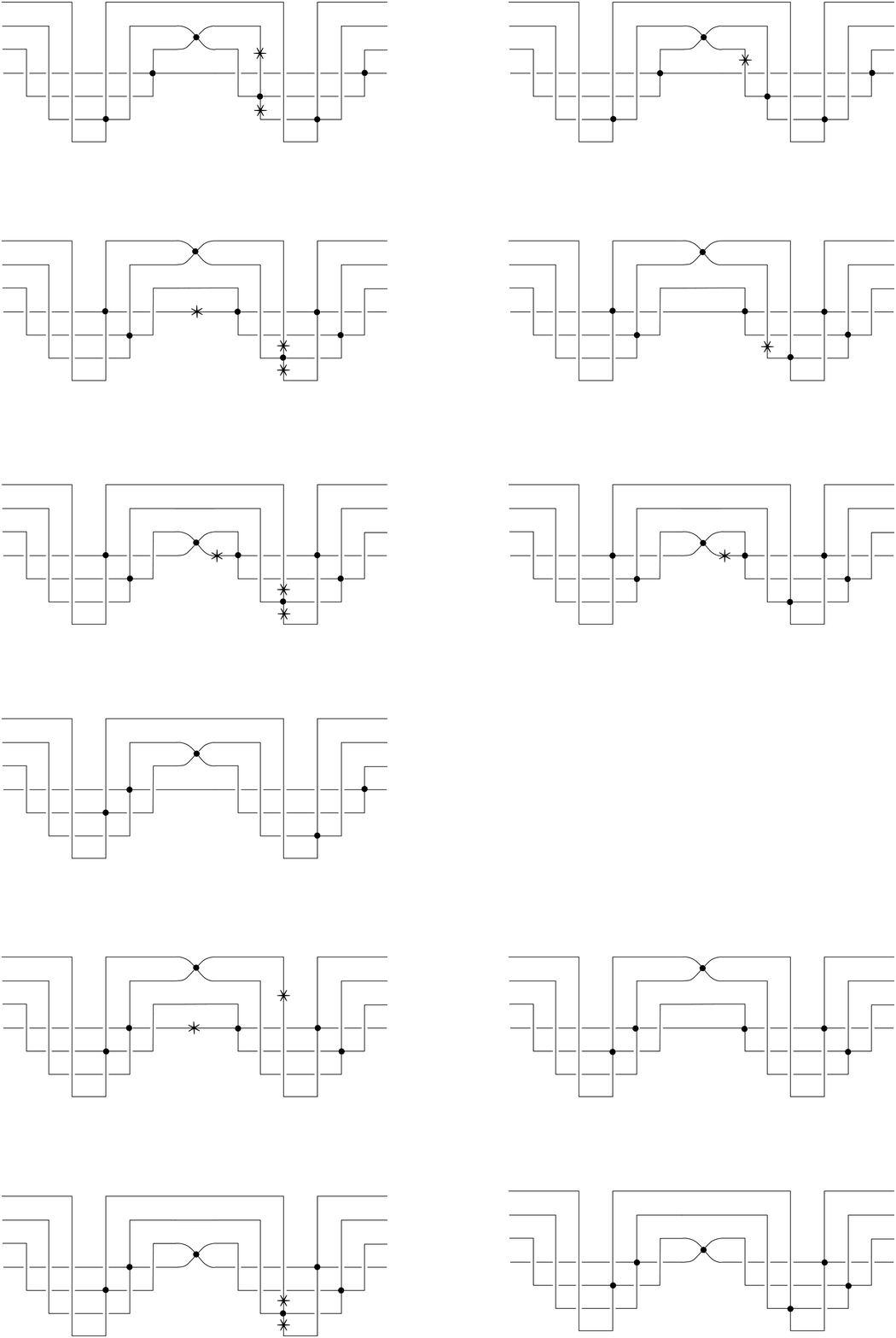}
  \caption{In the diagrams, $*$ denotes a base point. A dot denotes
    the specified crossing is augmented and the augmentation sends the
    crossing to the label. Here $-/+$(a) denotes a negative/positive
    crossing where the ruling has configuration (a) and the rest are
    defined analogously. (This figure is taken from \cite{Leverson}.)}
  \label{fig:basepoints}
\end{figure}

Define the augmentation $\epsilon:\A(D(\Lambda))\to F$ of the DGA for
the dipped diagram $D(\Lambda)$ on generators as follows: If the
ruling is switched at a crossing $a_\ell$, then set
$\epsilon(a_\ell)=1$. If not, set $\epsilon(a_\ell)=0$. (Note that we
can augment the switched crossings to any nonzero element of $F$ and
still get an augmentation. But in the case where $\Lambda$ is a knot,
by augmenting the switched crossing to $1$, we will be able to ensure
$\epsilon(t)=-1$.) Add base points and augment the crossings in the
dips following Figure~\ref{fig:basepoints}. On the remaining
generators, set
\[\epsilon(c^\ell_{ij})=\begin{cases}
  1&\text{if }\ell=0\text{ and strands }i,j\text{ are paired in the normal ruling and go through the }p\text{-th }1\text{-handle}\\
  (-1)^{\lvert c^\ell_{ij}\rvert}&\text{if }\ell=1, i>j,\text{ and strands }i,j\text{ are paired in the normal ruling and go through the }p\text{-th }1\text{-handle}\\
  0&\other.
\end{cases}\] Augment all base points to $-1$.

By considering Figure~\ref{fig:basepoints}, one can check that
$\epsilon$ is an augmentation on the $a_\ell$ and the crossings in the
dips.

\begin{note}
  $c^\ell_{\{ij\}}=c^\ell_{\min(i,j),\max(i,j)}$
\end{note}

We will now check that $\epsilon$ is an augmentation on the
$c^\ell_{ij}$ generators from the $p$-th $1$-handle.

($\epsilon\partial c^0_{ij}=0$) For any ruling, at the left end of
the diagram, each strand is paired with another strand going through
the same $1$-handle. So for each strand $i$ going through the $p$-th
$1$-handle, there exists a strand $j\neq i$ such that strand $i$ and
$j$ are paired and $1\leq i,j\leq N_p$. So if $i<j$, then
$\epsilon(c^0_{ij})=1$, $\epsilon(c^0_{\{i\ell\}})=0$ for all
$\ell\neq j$, and $\epsilon(c^0_{\{j\ell\}})=0$ for all $\ell\neq
i$. Suppose $i<r<\ell$. We see that $\epsilon(c^0_{ir})=0$ if $r\neq
j$ and $\epsilon(c^0_{r\ell})=0$ if $r=j$. Thus
$\epsilon(c^0_{ir}c^0_{r\ell})=0$ for all $i<r<\ell$ and so
\[\epsilon\partial c^0_{i\ell}=\sum_{i<r<\ell}(-1)^{\lvert
  c^0_{ir}\rvert+1}\epsilon(c^0_{ir}c^0_{r\ell})=0\] for $i<\ell$.

($\epsilon\partial c^1_{ij}=0$) Recall that in the $p$-th $1$-handle
\[\partial c^1_{ij}=\delta_{ij}+\sum_{i<\ell\leq N_p}(-1)^{\lvert
  c^0_{i\ell}\rvert+1}c^0_{i\ell}c^1_{\ell
  j}+\sum_{1\leq\ell<j}(-1)^{\lvert
  c^1_{i\ell}\rvert+1}c^1_{i\ell}c^0_{\ell j}.\] If $i\neq j$, then
$\epsilon(c^0_{i\ell}c^1_{\ell j})=0$ and
$\epsilon(c^1_{i\ell}c^0_{\ell j})=0$ for all $\ell$ since it is not
possible for strand $i$ to be paired with strand $\ell$ and for strand
$\ell$ to be paired with strand $j$ when $i\neq j$. Thus
\[\epsilon\partial c^1_{ij}=\sum_{i<\ell\leq
  N_p}(-1)^{\lvert
  c^0_{i\ell}\rvert+1}\epsilon(c^0_{i\ell}c^1_{\ell
  j})+\sum_{1\leq\ell<j}(-1)^{\lvert
  c^1_{i\ell}\rvert+1}\epsilon(c^1_{i\ell}c^0_{\ell j})=0.\]

To show $\epsilon\partial c^1_{ii}=0$, suppose strand $i$ is paired
with strand $\ell$ through the $p$-th $1$-handle. Then
\begin{align*}
  \epsilon\partial c^1_{ii}&=\begin{cases}
    1+(-1)^{\lvert c^0_{i\ell}\rvert+1}\epsilon(c^0_{i\ell}c^1_{\ell i})&i<j\\
    1+(-1)^{\lvert c^1_{i\ell}\rvert+1}\epsilon(c^1_{i\ell}c^0_{\ell i})&i>j
  \end{cases}\\
  &=\begin{cases}
    1+(-1)^{\lvert c^0_{i\ell}\rvert+1}(-1)^{\lvert c^1_{\ell i}\rvert}&i<j\\
    1+(-1)^{\lvert c^1_{i\ell}\rvert+1}(-1)^{\lvert
      c^1_{i\ell}\rvert}&i>j
  \end{cases}\\
  &=0
\end{align*}
by Remark~\ref{rmk:grading}.

($\epsilon\partial c^\ell_{ij}=0$ for $1<\ell$) Recall
\[\partial c^\ell_{ij}=\sum_{r=0}^\ell\sum_{s=1}^{N_p}(-1)^{\lvert
  c^r_{is}\rvert+1}c^r_{is}c^{\ell-r}_{sj}\] for $1<\ell$, $1\leq
p\leq k$, and $1\leq i,j\leq N_p$. We will show that
\[\epsilon(c^r_{is}c^{\ell-r}_{sj})=0,\]
which implies that $\epsilon\partial c^\ell_{ij}=0$.  If $\ell>2$,
then for all $0\leq r\leq\ell$, either $r>1$ or $\ell-r>1$, so
$\epsilon(c^r_{is}c^{\ell-r}_{sj})=0$ for all $i,j,s$. If $\ell=2$,
then $r>1$, $\ell-r>1$, or $r=1=\ell-r$. The first and second case
clearly imply $\epsilon(c^r_{is}c^{\ell-r}_{sj})=0$. In the final
case, this is also clearly true, unless $i=j$ and strands $i$ and $s$
are paired in the ruling. In this case, either $i<s$ or $s<i=j$, so
either $\epsilon(c^1_{is})=0$ or
$\epsilon(c^1_{sj})=0$. So \[\epsilon\partial
c^\ell_{ii}=\sum_{r=0}^\ell\sum_{s=1}^{N_p}(-1)^{\lvert
  c^r_{is}\rvert+1}\epsilon(c^r_{is}c^{\ell-r}_{si})=0\] for all
$1\leq p\leq k$, $1\leq i\leq N_p$, and $\ell>1$. So for $1<\ell$
\[\epsilon\partial c^\ell_{ij}=0.\]

(grading) From the definition, $a_i$ is augmented only if the
$\rho$-graded normal ruling is switched at $a_i$ and thus
$\rho\big\vert\lvert a_i\rvert$. Since $\lvert
a_i\rvert=\lvert\ta_i\rvert$, the augmentation is $\rho$-graded.

\begin{prop}
  If $\Lambda\subset\#^k(S^1\times S^2)$ is an $n$-component link,
  $\rho\vert2r(\Lambda)$ is even, and $\Lambda$ has a $\rho$-graded
  normal ruling, then the $\rho$-graded augmentation
  $\epsilon:\A(\Lambda)\to F$ constructed above sends $t_1\cdots t_s$
  to $(-1)^n.$
\end{prop}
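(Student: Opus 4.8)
The plan is to derive this from Theorem~\ref{thm:linksInR} via the satellite construction of \S\ref{sec:augToRuling}, rather than recounting base points from scratch. By construction the augmentation $\epsilon$ sends every base point to $-1$, so using the base-point-change isomorphisms of \S\ref{sec:basePointChanges} (together with the stable tame isomorphism relating $\A(D(\Lambda))$ and $\A(\Lambda)$) it descends to an augmentation of $\A(\Lambda)$ over the original $s$ base points, which I continue to call $\epsilon$, still sending each $t_i$ to a power of $-1$. Feeding $\epsilon$ into the construction of \S\ref{sec:augToRuling} yields a $\rho$-graded augmentation $\epsilon_S$ of $\A(S(\Lambda))$ with $\epsilon_S(t_i)=\epsilon(t_i)$ for every $i$; since $\rho$ is even, $\epsilon_S$ is in particular $2$-graded (a $2$-graded augmentation always makes sense, as $2\mid 2r(S(\Lambda))$).

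The geometric point I would then record is that $S(\Lambda)$, as a Legendrian link in $\reals^3$, carries the same $s$ base points as $\Lambda$ and has the same number $n$ of components: satelliting replaces each $1$-handle by a tangle in the neighborhood of a twice-stabilized unknot but connects strand $i$ on the left to strand $i$ on the right exactly as the $1$-handle did, so the global connectivity of the link is unchanged, and the companion unknot is not part of $S(\Lambda)$. Applying Theorem~\ref{thm:linksInR} with $\rho=2$ to $\epsilon_S$ — whose proof shows that an even-graded augmentation sends $t_1\cdots t_s$ to $(-1)$ raised to the number of components of the link (this is where Lemma~\ref{lem:oddNumBasepts} and the congruence between the number of base points and $c+sw+a_-$ enter) — gives $\epsilon_S(t_1\cdots t_s)=(-1)^n$. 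Since $\epsilon(t_1\cdots t_s)=\epsilon_S(t_1\cdots t_s)$, the proposition follows.

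I expect the main obstacle to be two pieces of bookkeeping rather than any hard idea: making precise the passage from the dipped augmentation $\epsilon\colon\A(D(\Lambda))\to F$ to an honest augmentation of $\A(\Lambda)$ (since \S\ref{sec:augToRuling} satellites augmentations of $\A(\Lambda)$, not of its dipped form), and verifying carefully the component count for $S(\Lambda)$. If one prefers a self-contained argument avoiding the satellite, the only route I see is to imitate the proof of Theorem~\ref{thm:linksInR} directly: since $\rho$ is even the ruling switches only at even-degree crossings, so paired strands — including strands paired through a $1$-handle by Condition~\ref{cond:sameHandle} — carry opposite orientations; one then counts the base points placed in the construction (one per strand at $x=0$, one per right-cusp loop, and those inserted in the dips of Figure~\ref{fig:basepoints}), uses that $\sum_\ell N_\ell$ is even whenever a normal ruling exists (as in the proof of Corollary~\ref{cor:oddNumStrands}) to discard the $x=0$ base points modulo $2$, and checks that the remaining count is $\equiv c+sw+a_-\equiv n\pmod 2$ by re-deriving the analogues of \eqref{state1}--\eqref{state4} for the tangle $T$. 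This is more delicate because $tb$ is not directly defined in $\#^k(S^1\times S^2)$, so I would favor the satellite argument.
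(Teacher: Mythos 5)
Your proposal is correct in substance and lands on the same key inputs as the paper (the satellite $S(\Lambda)\subset\reals^3$ and the even-graded statement of Theorem~\ref{thm:linksInR}), but it routes the computation differently. The paper transfers the \emph{ruling}: it extends the given ruling of $\Lambda$ to the unique thin ruling of $S(\Lambda)$, runs the ruling-to-augmentation construction on both diagrams separately, and then explicitly compares the two values of $\epsilon(t_1\cdots t_s)$ by counting the base points that the $\reals^3$ construction inserts but the $\#^k(S^1\times S^2)$ construction does not: six per paired pair of strands (one at each switched $d,e,f,g,h,q$ crossing, all in configuration $+$(a) because $\rho$ is even) and $3N_p$ per $1$-handle from the extra right cusps, the latter killed by Corollary~\ref{cor:oddNumStrands}. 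You instead transfer the \emph{augmentation} through the map of \S\ref{sec:augToRuling}, which sends $t_i\mapsto\tepsilon(\tilde t_i)$ on the nose, so no discrepancy count is needed and the parity of $N_p$ never appears explicitly; you then quote Theorem~\ref{thm:linksInR} for $S(\Lambda)$, which has the same $n$ components. This is cleaner bookkeeping at the cost of hiding where the handle parity enters (it is absorbed into the verification that the \S\ref{sec:augToRuling} map is an augmentation), whereas the paper's count makes the role of $N_p$ even and of the $+$(a) configurations visible. The one genuine loose end you flag — that the ruling-to-augmentation construction produces an augmentation of $\A(D(\Lambda))$ while the satellite map takes augmentations of $\A(\Lambda)$, so one must pass through the stable tame isomorphism (which preserves the $t_i$ and hence $\epsilon(t_1\cdots t_s)$) — is equally present and equally implicit in the paper's own proof, so it is not a defect specific to your argument. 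Your component count for $S(\Lambda)$ is also the one the paper silently uses when it invokes ``$\epsilon(t_1\cdots t_s)=(-1)^n$ by Theorem 1.1.''
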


Thus, if $\Lambda$ is a knot, $\epsilon(t)=-1$ for all even-graded
augmentations $\epsilon$.

\begin{proof}
  Given a $\rho$-graded ruling of $\Lambda$ in $\#^k(S^1\times S^2)$,
  there is a unique way to extend it to a ruling of $S(\Lambda)$ by
  switching at $d_{ji},e_{ij},f_{ji},g_{ij},h_{ji},q_{ij}$ if and only
  if strands $i<j$ are paired in the ruling of $\Lambda$. Let
  $\tepsilon:\A(\Lambda)\to F$ be the $\rho$-graded augmentation
  resulting from the $\rho$-graded normal ruling and let
  $\epsilon:\A(S(\Lambda))\to F$ be the $\rho$-graded augmentation
  resulting from the $\rho$-graded normal ruling of $S(\Lambda)$ as
  constructed in \cite{Leverson} in $\reals^3$. Note that
  \[\frac{\epsilon(t_1\cdots t_s)}{\tepsilon(t_1\cdots
    t_s)}=\left(\prod_{1\leq p\leq
      k}(-1)^{3N_p}\right)\prod_{i,j\text{ paired}}(-1)^6.\] If
  strands $i<j$ are paired near $x=0$ in the ruling of $\Lambda$, then
  the ruling of $S(\Lambda)$ must be switched at
  $d_{ji},e_{ij},f_{ji},g_{ij},h_{ji},$ and $q_{ij}$ with
  configuration $+$(a) since the ruling is $\rho$-graded and $\rho$ is
  even. So there is one additional base point augmented to $-1$ per
  crossing. Thus, there are six additional base points augmented to
  $-1$ for each pair of strands. Each right cusp contributes one extra
  base point augmented to $-1$ and there are three additional right
  cusps for each strand. However, $N_p$ is even for all $1\leq p\leq
  k$ by Corollary \ref{cor:oddNumStrands} and $\epsilon(t_1\cdots
  t_s)=(-1)^n$ by Theorem 1.1 in \cite{Leverson} so we see that
  \[\frac{(-1)^n}{\tepsilon(t_1\cdots t_s)}=1\]
  and so $\tepsilon(t_1\cdots t_s)=(-1)^n.$

\end{proof}

\section{Correspondence for links in $J^1(S^1)$}\label{sec:solidTorus}
Recall that the $1$-jet space of the circle, $J^1(S^1)$, is
diffeomorphic to the solid torus $S^1_x\times\reals^2_{y,z}$ with
contact structure given by $\xi=\ker(dz-ydx)$. As in
\cite{NgSolidTorus}, by viewing $S^1$ as a quotient of the unit
interval, $S^1=[0,1]/(0\sim1)$, we can see Legendrian links in
$J^1(S^1)$ as quotients of arcs in $I\times\reals^2$ with boundary
conditions which are everywhere tangent to the contact planes. Given a
Legendrian link $\Lambda\subset J^1(S^1)$ we will use the methods of
Lavrov-Rutherford in \cite{LavrovSolidTorus} to show the following,
restated from the introduction:

\begin{thmLinksInSolidTorus}
  Let $\Lambda$ be a Legendrian link in $J^1(S^1)$. Given a field $F$,
  the Chekanov-Eliashberg DGA $(\A,\partial)$ over
  $\integers[t_1^{\pm1},\ldots,t_s^{\pm1}]$ has a $\rho$-graded
  augmentation $\epsilon:\A\to F$ if and only if a front diagram of
  $\Lambda$ has a $\rho$-graded generalized normal ruling.
\end{thmLinksInSolidTorus}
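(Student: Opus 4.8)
The plan is to adapt the dipped-diagram argument of \S\ref{sec:correspLinks} (which itself adapts \cite{Leverson}) to the annular front diagrams used by Ng--Traynor and Lavrov--Rutherford. Unlike the case of $\#^k(S^1\times S^2)$, the Chekanov--Eliashberg DGA of a link in $J^1(S^1)$ carries no internal DGA and is generated combinatorially by the crossings and cusps of the front in the annulus, so no satellite detour into $\reals^3$ is needed. First I would fix a front normal form for $\Lambda$ on the annulus as in \cite{NgSolidTorus,LavrovSolidTorus}, place a base point on each component and one inside each loop arising from the resolution of a right cusp, and form the dipped diagram $D(\Lambda)$ by inserting a dip between each consecutive pair of crossings and cusps around $S^1$. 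As in \cite{Leverson}, this diagram is stable tame isomorphic to $\Lambda$, and once there are enough dips a totally augmented disk cannot span two of them, so the differential on $\A(D(\Lambda))$ is governed by the local pictures of Figure~\ref{fig:basepoints}.

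For the forward direction, given a $\rho$-graded augmentation $\epsilon:\A(\Lambda)\to F$, I would push it through the Reidemeister~II moves to a $\rho$-graded augmentation of $\A(D(\Lambda))$ and then read off a sequence of involutions $\sigma=(\sigma_0,\ldots,\sigma_M)$ on the strands over each complementary interval $I_m\subset S^1$ by declaring strands $i,j$ paired when the corresponding dip generator $c^0_{\{ij\}}$ is augmented (and the two strands at a cusp paired there). The conditions of a generalized normal ruling then follow from the local analysis at crossings and cusps exactly as in \cite{Leverson}; the one genuinely new point is that, because $\A(\Lambda)$ is honestly defined on the annulus, the dip straddling the seam $x=0\sim x=A$ forces the involution $\sigma_0$ at $x=0$ to agree with the involution at $x=A$, which is precisely the monodromy-consistency required of a generalized normal ruling. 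Degree divisibility of the switched crossings is immediate since a crossing is augmented only when the ruling is switched there.

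For the converse, given a $\rho$-graded generalized normal ruling I would build a $\rho$-graded augmentation $\epsilon:\A(D(\Lambda))\to F$ directly: set $\epsilon=1$ on switched crossings and $\epsilon=0$ on the others, augment the dip crossings and insert base points according to Figure~\ref{fig:basepoints}, augment every base point to $-1$, and augment the remaining dip generators $c^0_{\{ij\}}$ to $1$ exactly when strands $i,j$ are paired in the ruling. One then checks $\epsilon\circ\partial=0$ on each class of generators --- crossings of $\Lambda$, crossings created by resolving right cusps, and crossings in the dips --- by the same case analysis as in \S\ref{sec:rulingToAug} and \cite{Leverson}, the only new verification being that the computation closes up correctly at the seam $x=0\sim x=A$. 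Since $D(\Lambda)$ and the original annular diagram have stable tame isomorphic DGAs, this yields an augmentation of $\A(\Lambda)$, and it is $\rho$-graded because only switched (hence $\rho$-divisible) crossings are augmented.

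The main obstacle throughout is the cyclic nature of the annulus: the front normal form, the dip construction, and the ``no disk spans two dips'' estimate must all be arranged around $S^1$, and one must track how the involutions, the orientation signs, and the powers of the $t_i$ behave for disks and ruling paths that wrap past the seam, matching Ng--Traynor's sign conventions. This is also why the theorem makes no assertion of the form $\epsilon(t_1\cdots t_s)=(-1)^s$: the rotation numbers of wrapping paths in $J^1(S^1)$ do not force the parity identity of Lemma~\ref{lem:oddNumBasepts}, so the value of $\epsilon$ on the $t_i$ is not pinned down.
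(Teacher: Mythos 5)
Your outline of the ruling-to-augmentation direction is close in spirit to the paper's (dipped diagram, augment switched crossings to $1$, augment the dip generator $c^m_{ij}$ to $1$ exactly when $\sigma_m(i)=j$, base points at $-1$), but the augmentation-to-ruling direction has a genuine gap. You propose to ``read off'' the involutions by declaring strands $i,j$ paired whenever the dip generator $c^0_{\{ij\}}$ is augmented. This does not define an involution in general: $\epsilon(C_m)$ is merely a strictly upper triangular matrix with $(\Sigma\epsilon(C_m))^2=0$, and a given row may have several nonzero entries. In the $\reals^3$ plat-position argument one normalizes the augmentation by a left-to-right induction starting from the left cusps, but on the annulus there is no starting point and no reason such a normalization closes up at the seam. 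The paper (following Lavrov--Rutherford) resolves exactly this by invoking Barannikov's canonical form: Proposition~\ref{prop:involution} assigns to each such matrix a \emph{unique} involution $\tau(\Sigma\epsilon(C_m))$, invariant under upper triangular conjugation, and Proposition~\ref{prop:mcomplexRelation} controls how $\tau$ changes across a crossing or cusp. The uniqueness is what makes your ``monodromy-consistency at the seam'' automatic; without it that consistency is an unproved assertion, not a consequence of the construction.

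Relatedly, your proposal never engages with the feature that makes these rulings \emph{generalized}: the involutions may have fixed points (self-paired strands), which is forced because the homology of the $M$-complex $(\Sigma\epsilon(C_m))$ need not vanish on the annulus. In the forward direction this is where the extra normality condition of Definition~\ref{defn:genRuling} comes from (the two additional cases in Proposition~\ref{prop:mcomplexRelation}); in the backward direction the switched configurations involving a self-paired strand require the separate local models of Figure~\ref{fig:basepointsGeneralized}, with their own base point placements and prescribed values $\epsilon(t_i)=(-1)^{\lvert a\rvert+1}$, not just Figure~\ref{fig:basepoints}. Saying the local analysis goes through ``exactly as in \cite{Leverson}'' skips precisely the new cases. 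Your closing remark about why no statement of the form $\epsilon(t_1\cdots t_s)=(-1)^s$ appears is reasonable, and indeed the paper shows that with a strictly generalized ruling one can realize any nonzero value of $\epsilon(t_1\cdots t_s)$.
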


We recall the definition of generalized normal ruling as given in
\cite{LavrovSolidTorus}.

\begin{defn}\label{defn:genRuling}
  A {\bf generalized normal ruling} is a sequence of involutions
  $\sigma=(\sigma_1,\ldots,\sigma_M)$ as in Definition
  \ref{defn:normalRuling} with the following differences:
  \begin{enumerate}
  \item Remove the requirement that $\sigma_m$ is fixed-point-free and
    the condition about $1$-handles.
  \item If strands $\ell$ and $\ell+1$ cross in the interval
    $(x_{m-1},x_m)$ above $I_{m-1}$, where exactly one of the crossing
    strands is a fixed point of $\sigma_m$, then the crossing is a
    switch if $\sigma_m$ satisfies the conditions in
    \eqref{cond:switch} of Definition \ref{defn:normalRuling}. If
    crossing is a switch, then we require an additional normality
    condition:
    \[\sigma_m(\ell)=\ell<\ell+1<\sigma_m(\ell+1)\text{ or
    }\sigma_m(\ell)<\ell<\ell+1=\sigma_m(\ell+1).\]
  \end{enumerate}

  A {\bf strictly generalized normal ruling} is a generalized normal
  ruling which is not a normal ruling, in other words, a generalized
  normal ruling with at least one fixed point.
\end{defn}

Thus, near a crossing, a generalized normal ruling looks like the
crossings in Figure \ref{fig:config} or Figure
\ref{fig:configGeneralized}.

\begin{figure}
  \labellist
  \small\hair 2pt
  \pinlabel $(g)$ [t] at 89 10
  \pinlabel $(h)$ [t] at 317 10
  \endlabellist
  \includegraphics[width=.4\textwidth]{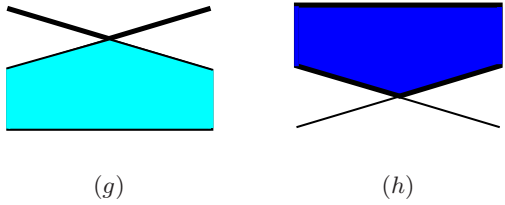}
  \vspace{.25in}
  \caption{These figures give the configuration of a generalized
    normal ruling near a switched crossing involving exactly one
    self-paired strand. With the top row of configurations in Figure
    \ref{fig:config}, these are all possible configurations of a
    generalized normal ruling near a switched crossing.}
  \label{fig:configGeneralized}
\end{figure}

\begin{rmk}
  \begin{enumerate}
  \item If a crossing involving strands $\ell$ and $\ell+1$ occurs in
    the interval $(x_{m-1},x_m)$ and both crossing strands are fixed
    by the ruling, self-paired, in other words,
    $\sigma_{m-1}(\ell)=\ell$ and $\sigma_{m-1}(\ell+1)=\ell+1$, then
    $\sigma_m=(\ell\quad\ell+1)\circ\sigma_{m-1}\circ(\ell\quad\ell+1)$
    and so we will not consider such crossings to be switched.
  \item Note that the number of generalized normal rulings of a
    Legendrian link is not invariant under Legendrian isotopy.
  \end{enumerate}
\end{rmk}

The definition of the Chekanov-Eliashberg DGA of a Legendrian link in
$\reals^3$ can be extended to Legendrian links in $J^1(S^1)$. (One can
find the full definition of the Chekanov-Eliashberg DGA of a
Legendrian link in $J^1(S^1)$ in \cite{NgSolidTorus}.) Note that given
an augmentation of the Chekanov-Eliashberg DGA over
$\integers[t,t^{-1}]$ of a Legendrian link in $S^1\times S^2$, one can
define an augmentation of the DGA of the analogous link (where if a
strand goes through the $1$-handle with $y=y_0$ at $x=0$, then it is
paired with the strand going through the $1$-handle with $y=y_0$ at
$x=A$) in $J^1(S^1)$ and similarly for normal rulings. (The resulting
normal ruling of the link in $J^1(S^1)$ will not have any self-paired
strands.) However, there is no reason to think the converse is true.

\subsection{Matrix definition of the DGA in $J^1(S^1)$}
Ng and Traynor define a version of the Chekanov-Eliashberg DGA $\A$
over $R=\integers[t,t^{-1}]$ in \cite{NgSolidTorus}. For ease of
definition, note that we can assume all left and right cusps involve
the two strands with lowest $z$-coordinate (and thus highest labels)
and that there is one base point at $x=0$ on each strand and these are
the only base points. We give the definition of the DGA for the dipped
version $\Lambda$, $D(\Lambda)$ as in \cite{LavrovSolidTorus}. Label
the dips as in Figure \ref{fig:dips} with $b^m_{ij}$ and $c^m_{ij}$ in
the dip at $x_m$. Place these generators in upper triangular matrices
\[B_m=(b^m_{ij})\text{ and }C_m=(c^m_{ij}).\] Note that since the
$x$-coordinate is $S^1$-valued, we need to add the convention that
$B_0=B_M$ and $C_0=C_M$. We then see that
\begin{align*}
  \partial C_m&=(\Sigma C_m)^2,\\
  \partial B_m&=-\Sigma(I+B_m)\Sigma C_m+\widetilde{C}_{m-1}(I+B_m),
\end{align*}
where $\Sigma$ is the diagonal matrix with $(-1)^{\mu_m(i)}$ the $i$-th
entry on the diagonal for Maslov potential $\mu_m$ at $x=x_m$ and $I$ is the
appropriately sized identity matrix. The form of $\widetilde{C}_m$ will
depend on the tangle appearing in the interval $(x_{m-1},x_m)$.

If $(x_{m-1},x_m)$ contains a crossing $a_m$ of strands $k$ and $k+1$,
then
\begin{align*}
  &\partial a_m=c^{m-1}_{k,k+1}\\
  &\widetilde{C}_{m-1}= U_{k,k+1}\widehat{C}_{m-1}V_{k,k+1},
\end{align*}
where $U_{k,k+1}$ and $V_{k,k+1}$ are the identity matrix with the
$2\times2$ block in rows $k$ and $k+1$ and columns $k$ and $k+1$
replaced with $\begin{pmatrix}0&1\\1&(-1)^{\lvert
    a_m\rvert+1}a_m\end{pmatrix}$ for $U_{k,k+1}$ and $\begin{pmatrix}
  a_m&1\\1&0\end{pmatrix}$ for $V_{k,k+1}$, and $\widehat{C}_{m-1}$ is
$C_{m-1}$ with $0$ replacing the entry $c^{m-1}_{k,k+1}$.

If $(x_{m-1},x_m)$ contains a left cusp, by assumption strands
$N(m)-1$ and $N(m)$ are incident to the cusp. In this case,
\[\widetilde{C}_{m-1}=JC_{m-1}J^T+W,\]
where $J$ is the $N(m-1)\times N(m-1)$ identity matrix with two rows of
zeroes added to the bottom and $W$ is $N(m)\times N(m)$ matrix where
the $(N(m)-1,N(m))$-entry is $1$ and all other entries are zero.

Finally, if $(x_{m-1},x_m)$ contains a right cusp $a_m$, by assumption
strands $N(m)-1$ and $N(m)$ are incident to the cusp. In this case
\begin{align*}
  &\partial a_m=1+c^{m-1}_{N(m-1)-1,N(M-1))},\\
  &\widetilde{C}_{m-1}=KC_{m-1}K^T,
\end{align*}
where $K$ is the $N(m-1)\times N(m-1)$ identity matrix with two
columns of zeroes added to the right.

\subsection{Proof of correspondence}

We will use the methods of \cite{LavrovSolidTorus} to prove
Theorem~\ref{thm:main}. A few conventions and notation: Assume all
left and right cusps occur at lowest $z$-coordinate of all strands at
that $x$-coordinate, in other words, assume for all cusps that the two
strands with highest label are incident to the cusp. Assume that there
is one base point at $x=0$ of $\Lambda$ on each strand and these are
the only base points. Given an involution $\sigma$ of
$\{1,\ldots,N\}$, $\sigma^2=id$, we define $A_\sigma=(a_{ij})$ the
$N\times N$ matrix with entries
\[a_{ij}=\begin{cases}
  1&\text{if }i<\sigma(i)=j\\
  0&\other
\end{cases}\]

(Ruling to augmentation) Given a generalized normal ruling
$\sigma=(\sigma_1,\ldots,\sigma_M)$, we will define a $\rho$-graded
augmentation $\epsilon:\A(D(\Lambda))\to F$ satisfying Property (R)
(as in \cite{SabloffAug}) by defining $\epsilon$ on the crossings in
the dip involving crossings $b^0_{ij}$ and $c^0_{ij}$ and extending to
the right.

\bigno {\bf Property (R):} In any dip, the generator $c^m_{rs}$ is
augmented (to $1$) if and only if $\sigma_m(r)=s$.\bigskip

Add a base point to the loop in each resolution of a right
cusp. Augment all base points to $-1$.  Given a crossing $a$, set
\[\epsilon(a)=\begin{cases}
  1&\text{if the ruling is switched at }a\\
  0&\other.
\end{cases}\] Define $\epsilon(B_0)=0$ and
$\epsilon(C_0)=A_{\sigma_0}$. We will now extend $\epsilon$ to the
right. Suppose $\epsilon$ is defined on all crossings in the interval
$(0,x_{m-1})$. If $(x_{m-1},x_m)$ contains a crossing, define
$\epsilon$ on crossings $b^m_{ij}$ and $c^m_{ij}$ and add base points
as in Figure \ref{fig:basepoints} and Figure
\ref{fig:basepointsGeneralized}. If $(x_{m-1},x_m)$ contains a left
cusp, set
\[\epsilon(B_m)=J\epsilon(B_{m-1})J^T+W.\] If $(x_{m-1},x_m)$ contains
a right cusp, set
\[\epsilon(B_m)=K\epsilon(B_{m-1})K^T.\]
It is easy to check that by our definition the augmentation satisfies
Property (R), which tells us $\epsilon(B_0)=\epsilon(B_M)$ and
$\epsilon(C_0)=\epsilon(C_M)$, and our augmentation is a $\rho$-graded
augmentation.

\begin{figure}
  \labellist
  \small\hair 3pt 
  \pinlabel {(g)} [b] at 217 157 
  \pinlabel $c$ [tl] at 158 76
  \pinlabel $a$ [b] at 217 126 
  \pinlabel $a^{-1}$ [tr] at 306 45
  \pinlabel $ac$ [tl] at 387 77
  \pinlabel $1$ [r] at 275 92
  \pinlabel $2$ [l] at 306 104
  
  \pinlabel {(h)} [b] at 699 157
  \pinlabel $c$ [tl] at 607 44
  \pinlabel $a$ [b] at 699 94
  \pinlabel $a^{-1}$ [tr] at 758 77
  \pinlabel $ac$ [tl] at 834 44
  \endlabellist
  \includegraphics[width=.8\textwidth]{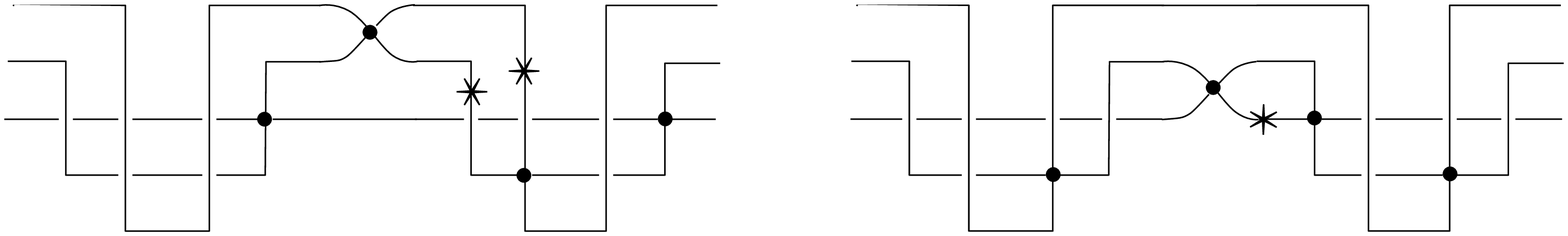}
  \caption{In the diagrams, $*_i$ denotes the base point associated to
    $t_i$. A dot denotes the specified crossing is augmented and the
    augmentation sends the crossing to the label. In configuration
    (g), $\epsilon(t_1)=(-1)^{\lvert a\rvert+1}$ and
    $\epsilon(t_2)=(-1)^{\lvert c_{i,i+1}\rvert+1}$. In
    configuration (h), $\epsilon(t)=-1$.}
  \label{fig:basepointsGeneralized}
\end{figure}

(Augmentation to ruling) This direction of the proof follows that of
the $\integers/2$ case in \cite{LavrovSolidTorus} and is based on
canonical form results from linear algebra due to Barannikov
\cite{Barannikov}.

\begin{defn}
  An {\bf $M$-complex} $(V,\B,d)$ is a vector space $V$ over a field
  $F$ with an ordered basis $\B=\{v_1,\ldots,v_N\}$ and a differential
  $d:V\to V$ of the form $dv_i=\sum_{j=i+1}^Na_{ij}v_j$ satisfying
  $d^2=0$.
\end{defn}

The following two propositions are essentially Proposition 5.4 and 5.6
in \cite{LavrovSolidTorus} and Lemma 2 and 4 in \cite{Barannikov}.

\begin{prop}\label{prop:involution}
  If $(V,\B,d)$ is an $M$-complex, then there exists a triangular
  change of basis $\{\tv_1,\ldots,\tv_N\}$ with
  $\tv_i=\sum_{j=1}^Na_{ij}v_j$ and an involution
  $\tau:\{1,\ldots,N\}\to\{1,\ldots,N\}$ such that
  \[d\tv_i=\begin{cases}
    \tv_j&\text{if }i<\tau(i)=j,\\
    0&\other.
  \end{cases}\] Moreover, the involution $\tau$ is unique.
\end{prop}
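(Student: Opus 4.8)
The plan is to follow Barannikov's argument (see \cite{Barannikov}, and \cite{LavrovSolidTorus} for the present formulation), inducting on $N = \dim V$. The case $N \le 1$ is trivial since then $d = 0$. For the inductive step, observe that $V' := \operatorname{span}\{v_2, \dots, v_N\}$ is a subcomplex: for $i \ge 2$ we have $dv_i \in \operatorname{span}\{v_{i+1}, \dots, v_N\} \subseteq V'$. It is an $M$-complex of dimension $N-1$, so the inductive hypothesis supplies a triangular basis $\tv_2, \dots, \tv_N$ (with $\tv_i \in \operatorname{span}\{v_i, \dots, v_N\}$ and nonzero $v_i$-coefficient) and an involution $\tau'$ of $\{2, \dots, N\}$ with $d\tv_i = \tv_{\tau'(i)}$ when $i < \tau'(i)$ and $d\tv_i = 0$ otherwise. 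It remains to bring $v_1$ into line. Writing $dv_1 = \sum_{i=2}^N c_i \tv_i \in V'$ and applying $d$, the relation $d^2 = 0$ together with the normal form on $V'$ gives $\sum_{i : i < \tau'(i)} c_i \tv_{\tau'(i)} = 0$; as these $\tv_{\tau'(i)}$ are distinct basis vectors, $c_i = 0$ whenever $i < \tau'(i)$, so $dv_1$ is supported on indices that are fixed points of $\tau'$ or larger members of $\tau'$-pairs.

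Next I would clear the ``larger member'' contributions: for each such $i$ in the support, say with $i_0 := \tau'(i) < i$ so that $d\tv_{i_0} = \tv_i$, replace $v_1$ by $v_1 - c_i \tv_{i_0}$. This removes exactly the $\tv_i$-term from $dv_1$ and changes nothing else (the index $i_0$, being a smaller member, already had $c_{i_0} = 0$), and it leaves the original $v_1$-coefficient equal to $1$. After these finitely many moves $dv_1 = \sum_{i \in S} c_i \tv_i$ with $S$ a set of fixed points of $\tau'$. If $dv_1 = 0$, put $\tv_1 := v_1$ and extend $\tau'$ by $\tau(1) = 1$. Otherwise let $j := \min S$, put $\tv_1 := c_j^{-1} v_1$, redefine $\tv_j := \tv_j + \sum_{i \in S,\, i > j} (c_i/c_j)\tv_i$, and let $\tau$ agree with $\tau'$ off $\{1, j\}$ and send $1 \leftrightarrow j$ (legitimate, since $j$ is a fixed point of $\tau'$); then $d\tv_1 = \tv_j$ exactly, and $d\tv_j = 0$ because every summand defining it has vanishing differential. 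In either case triangularity survives (the new $\tv_1$ still has nonzero $v_1$-coefficient, and the redefined $\tv_j$ still lies in $\operatorname{span}\{v_j, \dots, v_N\}$ with nonzero $v_j$-coefficient), and the normal form on $V'$ is untouched, because a fixed point of $\tau'$ is never the $d$-image of a basis vector, so altering $\tv_j$ affects only $d\tv_1$.

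For uniqueness, I would note that a triangular change of basis preserves the filtration $V^{\ge p} := \operatorname{span}\{v_p, \dots, v_N\}$ (with the convention $V^{\ge N+1} = 0$), so for all $p \le q$ the rank of the composite $V^{\ge p} \xrightarrow{\,d\,} V \twoheadrightarrow V/V^{\ge q}$ is an invariant of $(V, \B, d)$. Computing this rank in a normal-form basis shows it equals the number of $\tau$-pairs $(i, \tau(i))$ with $i < \tau(i)$, $i \ge p$, and $\tau(i) < q$. A two-variable inclusion--exclusion in $p$ and $q$ then recovers, for each $p < q$, whether $\tau(p) = q$, and the remaining indices must be the fixed points; hence $\tau$ is uniquely determined. (Alternatively, one may simply cite \cite{Barannikov} for this.)

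The step needing the most care is the reduction of $dv_1$ in the second paragraph: one must verify that subtracting multiples of the $\tv_{i_0}$ from $v_1$, and absorbing the leftover fixed-point terms into $\tv_j$, neither breaks triangularity nor disturbs the already-normalized differential on $V'$. The reason it works cleanly is structural: the only vectors one ever needs to move are $d$-images (handled by subtracting the corresponding preimage) or fixed-point vectors (which carry a trivial differential and are themselves never $d$-images), so each move is local and does no collateral damage.
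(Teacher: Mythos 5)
Your argument is correct and complete: the paper itself gives no proof of this proposition, citing Proposition 5.4 of Lavrov--Rutherford and Lemma 2 of Barannikov instead, and your induction on $N$ (normalize $d$ on $\operatorname{span}\{v_2,\dots,v_N\}$, use $d^2=0$ to kill the coefficients of $dv_1$ on smaller members of pairs, subtract preimages to kill the larger members, and absorb the leftover fixed-point terms into a single $\tv_j$) is exactly the standard argument from those sources. Your uniqueness proof via the ranks of $V^{\ge p}\xrightarrow{d}V/V^{\ge q}$ and inclusion--exclusion is likewise the standard (persistence-style) argument and is sound, including the key observation that a fixed point of $\tau'$ is never a $d$-image, so modifying $\tv_j$ does not disturb the normal form already achieved on $V'$.
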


\begin{rmk} \label{rmk:involutionProp}
  \begin{enumerate}
  \item If the basis elements $v_i$ have been assigned degrees
    $\lvert v_i\rvert\in\integers/\rho$ such that $V$ is
    $\integers/\rho$-graded and $d$ has degree $-1$, then it can be
    assumed that the change of basis preserves degree. Thus, if
    $i<\tau(i)=j$, then $\lvert v_i\rvert=\lvert v_j\rvert+1$.
  \item The set $\{[\tv_i]:\tau(i)=i\}$ forms a basis for the homology
    $H(V,d)$.
  \item In matrix formulation, Proposition \ref{prop:involution} says
    there is a unique function $D\mapsto\tau(D)$ which assigns an
    involution $\tau=\tau(D)$ to each strictly upper triangular matrix
    $D$ with $D^2=0$ and there is an invertible upper triangular
    matrix $P$ so that $PDP^{-1}=A_\tau$. The uniqueness statement
    tells us that $\tau(QDQ^{-1})=\tau(D)$ if $Q$ is a nonsingular
    upper triangular matrix.
  \end{enumerate}
\end{rmk}

\begin{prop}\label{prop:mcomplexRelation}
  Suppose $(V,\B,d)$ is an $M$-complex and $k\in\{1,\ldots,N\}$ such
  that $dv_k=\sum_{j=k+2}^Na_{kj}v_j$ so the triple $(V,\B',d)$ with
  $\B'=\{v_1,\ldots,v_{k+1},v_k,\ldots,v_N\}$ is also an
  $M$-complex. Then the associated involutions $\tau$ and $\tau'$ from
  Proposition \ref{prop:involution} are related as follows:
  \begin{enumerate}
  \item If
    \begin{align*}
      &\tau(k+1)<\tau(k)<k<k+1,\\
      &\tau(k)<k<k+1<\tau(k+1),\\
      &k<k+1<\tau(k+1)<\tau(k),\\
      &\tau(k)<k<k+1=\tau(k+1),\\
      &\tau(k)=k<k+1<\tau(k+1)\\
    \end{align*}
    then either $\tau'=\tau$ or $\tau'=(k\quad
    k+1)\circ\tau\circ(k\quad k+1)$.
  \item Otherwise $\tau'=(k\quad k+1)\circ\tau\circ(k\quad k+1)$.
  \end{enumerate}
\end{prop}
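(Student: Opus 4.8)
The plan is to work entirely in the matrix formulation of Remark~\ref{rmk:involutionProp}(3). Let $D=(a_{ij})$ be the strictly upper triangular matrix of $d$ with respect to $\B$, so that $\tau=\tau(D)$, and let $\Pi$ be the permutation matrix of the transposition $(k\quad k+1)$. Reordering $\B$ to $\B'$ conjugates $D$, so the matrix of $d$ with respect to $\B'$ is $D'=\Pi D\Pi$; the hypothesis $dv_k=\sum_{j\geq k+2}a_{kj}v_j$ is exactly the statement $a_{k,k+1}=0$, which is precisely the condition that keeps $D'$ strictly upper triangular, so that $\tau'=\tau(D')$ makes sense. First I would record one consequence of the hypothesis that gets used repeatedly: $\tau(k)\neq k+1$. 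Indeed, in the canonical basis of Proposition~\ref{prop:involution} the coefficient of $v_{k+1}$ in $d\tilde v_k$ equals $a_{k,k+1}=0$ (the only other possible contributions come from $dv_j$ with $j>k$, which lie in the span of $v_{k+2},\dots,v_N$), whereas $\tilde v_{k+1}$ has nonzero $v_{k+1}$-coefficient, so $d\tilde v_k\neq\tilde v_{k+1}$. Consequently $\mu:=(k\quad k+1)\circ\tau\circ(k\quad k+1)$ satisfies $A_\mu=\Pi A_\tau\Pi$, this matrix is again strictly upper triangular, and so $\mu=\tau(A_\mu)$ by the uniqueness in Proposition~\ref{prop:involution}.

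The next step is to compare $\tau'$ with $\mu$. Fix an invertible upper triangular $P$ with $PDP^{-1}=A_\tau$; conjugating this identity by $\Pi$ gives $QD'Q^{-1}=A_\mu$ with $Q:=\Pi P\Pi$, and $Q$ is upper triangular except possibly in the single entry in position $(k+1,k)$, which equals $P_{k,k+1}$. Writing $Q=GU$ with $G=I+g\,E_{k+1,k}$ (one checks $g=P_{k,k+1}/P_{k+1,k+1}$, which is defined since $P_{k+1,k+1}\neq 0$) and $U$ invertible upper triangular, and using that conjugation by an invertible upper triangular matrix preserves $\tau(\cdot)$ (Remark~\ref{rmk:involutionProp}(3)), we get $\tau'=\tau(D')=\tau(UD'U^{-1})=\tau(G^{-1}A_\mu G)$. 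A short computation — using $(A_\mu)_{k,k+1}=0$, which holds because $\tau(k)\neq k+1$ — shows that $G^{-1}A_\mu G$ is $A_\mu$ together with at most two additional nonzero entries: one in position $(\mu(k+1),k)$, present exactly when column $k+1$ of $A_\mu$ is nonzero, and one in position $(k+1,\mu(k))$, present exactly when row $k$ of $A_\mu$ is nonzero; both positions lie strictly above the diagonal. In particular, if $g=0$ or if neither extra entry occurs, then $G^{-1}A_\mu G=A_\mu$ and $\tau'=\mu=(k\quad k+1)\circ\tau\circ(k\quad k+1)$.

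It then remains to run the canonical-form (column-reduction) algorithm for $\tau(\cdot)$ on $G^{-1}A_\mu G$ in the cases where an extra entry appears. Using $\mu(k)=(k\quad k+1)\,\tau(k+1)$ and $\mu(k+1)=(k\quad k+1)\,\tau(k)$, one rewrites the conditions ``column $k+1$ of $A_\mu$ is nonzero'' and ``row $k$ of $A_\mu$ is nonzero'' as conditions on the relative order of $k$, $k+1$, $\tau(k)$, $\tau(k+1)$; then one adds the corresponding one or two entries and reduces. The outcome I expect is that $\tau'$ is always either $\mu$ or $\tau$: generically the extra entry is absorbed in the reduction and one still reads off $\mu=\tau(A_\mu)$, but for exactly the five arrangements listed in part~(1) the extra entry can occupy a pivot position that, after reduction, restores the pairing on $\{k,k+1\}$ to that of $\tau$, so that either value may occur depending on the other entries of $D$; for every other arrangement no such coincidence is possible and $\tau'=\mu$. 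The main obstacle is precisely this case-by-case verification — tracking, for each configuration, which pivot survives the reduction — and it is the content of Lemma~4 of \cite{Barannikov} and Proposition~5.6 of \cite{LavrovSolidTorus} (carried out there over $\integers/2$); I would reproduce that bookkeeping, the only new ingredients being the consistent propagation of signs over a general field $F$ and the preliminary remark $\tau(k)\neq k+1$, which is what guarantees that $A_\mu$, $G^{-1}A_\mu G$, and all the reduced matrices remain strictly upper triangular.
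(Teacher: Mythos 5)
Your proposal is correct and is consistent with the paper, which in fact gives no proof of this proposition at all: it simply attributes it to Proposition~5.6 of Lavrov--Rutherford and Lemma~4 of Barannikov, exactly the sources to which you defer the case-by-case reduction. Your added scaffolding (the observation that $\tau(k)\neq k+1$, the conjugation $D'=\Pi D\Pi$, the factorization $Q=GU$ isolating the single sub-diagonal entry, and the identification of the at most two extra entries of $G^{-1}A_\mu G$) is sound and correctly locates the only genuinely new content in passing from $\integers/2$ to a general field, namely the sign bookkeeping.
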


(Augmentation to ruling) This part of the proof is the same as the
analogous statement in \cite{LavrovSolidTorus} with
$\Sigma\epsilon(C_{m-1})$ replacing $\epsilon(Y_{m-1})$.

\subsection{Corollaries}
The following proposition uses techniques in the proof of Theorem
\ref{thm:correspSolidTorus} to show that
\[\aug_\rho(\Lambda)=F\backslash0\] for any field $F$ and any $\rho$
if $\Lambda$ has a strictly generalized normal ruling.

\begin{prop}
  Given a field $F$ and a Legendrian link $\Lambda\subset J^1(S^1)$
  with $n$ components and a strictly generalized normal ruling, for
  all $0\neq x\in F$ there exists an augmentation $\epsilon:\A\to F$
  such that
  \[\epsilon(t_1\cdots t_s)=x.\]
\end{prop}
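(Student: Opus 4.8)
The plan is to start from the strictly generalized normal ruling $\sigma$ of $\Lambda$ and run the ruling-to-augmentation construction from the proof of Theorem~\ref{thm:correspSolidTorus}, but with one modification: instead of augmenting every switched crossing to $1$, I will exploit the freedom in that construction. Recall that the construction builds an augmentation $\epsilon$ satisfying Property~(R), and that for a strictly generalized normal ruling there is at least one self-paired strand, hence (after possibly a small Legendrian isotopy moving a crossing along that strand, as in \cite{LavrovSolidTorus}) at least one switched crossing $a$ appearing in a configuration of type (g) or (h) from Figure~\ref{fig:configGeneralized}, involving exactly one self-paired strand. As recorded in Figure~\ref{fig:basepointsGeneralized}, in configuration (g) the base point adjacent to $a$ forces $\epsilon(t_1)=(-1)^{\lvert a\rvert+1}$ with $t_1$ the variable of that strand; in configuration (h) we get $\epsilon(t)=-1$ on the relevant component. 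The key observation is that the value $\epsilon(a)$ of the augmentation at such a switched crossing is a free nonzero parameter, and pushing it through the base point changes $\epsilon$ on the corresponding $t_i$ by exactly that factor.

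Concretely, first I would fix the generalized ruling $\sigma$ and carry out the construction of Theorem~\ref{thm:correspSolidTorus} verbatim to obtain a $\rho$-graded augmentation $\epsilon_0:\A(D(\Lambda))\to F$ with $\epsilon_0(a)=1$ at every switched crossing; by the argument already given this descends to an augmentation of $\A(\Lambda)$ and we may compute $\epsilon_0(t_1\cdots t_s)$ — call this value $y\in F\setminus 0$. Next, I would single out one switched crossing $a_\ast$ of type (g) or (h) lying on a self-paired strand belonging to component $\Lambda_{i_0}$, with base point $*_{i_0}$ (corresponding to $t_{i_0}$) next to it. Re-running the construction with $\epsilon(a_\ast)=\lambda$ for an arbitrary $0\neq\lambda\in F$ and $\epsilon=\epsilon_0$ on all other generators still yields a valid $\rho$-graded augmentation: the only differential checks affected are those involving $a_\ast$ and the two dip crossings it ``absorbs'', and by the local picture in Figure~\ref{fig:basepointsGeneralized} these remain satisfied when the dot-labels are rescaled by $\lambda$ and $\lambda^{-1}$ in the balanced way shown. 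The effect on the global monomial is $\epsilon_\lambda(t_1\cdots t_s)=\lambda^{\pm 1}\,y$ (the sign of the exponent is dictated by whether $*_{i_0}$ is crossed with or against orientation, which is the same datum that enters Figure~\ref{fig:basepointsGeneralized}). Since $\lambda\mapsto\lambda^{\pm1}y$ is a bijection $F\setminus 0\to F\setminus 0$, for any prescribed $x\in F\setminus 0$ we solve $\lambda=(x/y)^{\pm1}$ and obtain the desired augmentation.

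I expect the main obstacle to be the bookkeeping needed to justify that changing $\epsilon(a_\ast)$ from $1$ to $\lambda$ genuinely propagates to multiplying $\epsilon(t_{i_0})$ by $\lambda^{\pm1}$ without disturbing Property~(R) or the vanishing of $\epsilon\partial$ on any other generator — in particular one must check the dip crossings $b^m_{ij}, c^m_{ij}$ in the dip immediately to the right of $a_\ast$, whose augmented values are $a_\ast$-multiples according to Figure~\ref{fig:basepointsGeneralized}, still close up correctly, and that no other component's $t_j$ is affected (which is where the self-paired hypothesis is essential: a self-paired strand stays on its own component, so the rescaling is confined to $\Lambda_{i_0}$). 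A clean way to organize this is to invoke the base-point-change DGA isomorphism from \S\ref{sec:basePointChanges}: changing $\epsilon(a_\ast)$ is, up to that isomorphism, exactly the operation of sliding the base point $*_{i_0}$ past $a_\ast$ and rescaling, so the fact that it yields another augmentation is immediate and only the computation of the resulting value of $\epsilon(t_1\cdots t_s)$ remains, which is the elementary sign/monomial calculation sketched above. Finally, one notes the statement does not require $\rho$ even, so no parity argument à la Lemma~\ref{lem:oddNumBasepts} is needed here.
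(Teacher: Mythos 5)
Your overall strategy (start from the ruling-to-augmentation construction and exploit a one-parameter freedom created by the self-paired strand) is the right instinct, but the specific mechanism you propose has two genuine problems, and it is not the one the paper uses.

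First, a strictly generalized normal ruling guarantees a self-paired \emph{strand}, not a switched crossing of type (g) or (h). A component winding around $J^1(S^1)$ with no crossings at all (e.g.\ a single closed strand with the trivial ruling $\sigma_m(1)=1$) is strictly generalized but gives you nothing to rescale. Your parenthetical ``after possibly a small Legendrian isotopy moving a crossing along that strand'' does not repair this: an isotopy creating crossings changes the diagram and the set of generalized rulings (which the paper notes is not an isotopy invariant), and a Reidemeister~II pair need not admit a switch in the new ruling.

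Second, and more seriously, rescaling $\epsilon(a_\ast)$ from $1$ to $\lambda$ does not multiply $\epsilon(t_{i_0})$ by $\lambda^{\pm1}$. The base-point-slide isomorphism of \S\ref{sec:basePointChanges} sends $a_\ast\mapsto t^{\pm1}a_\ast$ and \emph{fixes} $t$, so composing with it changes the value of the crossing, not of $t_{i_0}$. Consistently with this, the forced base-point values in Figure~\ref{fig:basepointsGeneralized} are $(-1)^{\lvert a\rvert+1}$ and $(-1)^{\lvert c_{i,i+1}\rvert+1}$ --- signs depending only on degrees, not on the element of $F$ assigned to $a$. So your family $\epsilon_\lambda$ (even where it exists) has constant value on $t_1\cdots t_s$, and the final bijection argument collapses.

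The paper's proof locates the free parameter elsewhere: it places one base point on each strand at $x=0$, observes that because strand $k$ is self-paired ($\sigma_0(k)=k$) no dip crossing $c^0_{\{kj\}}$ is augmented, hence the base point $*_k$ lies on the boundary of no totally augmented disk, and therefore $\epsilon(t_k)$ is completely unconstrained. Setting $\epsilon(t_k)=(-1)^{N+c-1}x$ and all other base points to $-1$ gives $\epsilon(t_1\cdots t_s)=x$ directly. If you reorganize your argument around the freedom in the base point on the self-paired strand rather than the freedom at a switched crossing, the proof goes through without needing any crossing on that strand at all.
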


\begin{proof}
  Fix $0\neq x\in F$. Given a generalized normal ruling
  $\sigma=(\sigma_1,\ldots,\sigma_M)$ for $\Lambda$ with a self-paired
  strand, we will construct an augmentation
  $\epsilon:\A(D(\Lambda))\to F$ such that $\epsilon(t_1\cdots
  t_s)=x$.

  Suppose $k$ is the label at $x=0$ of a self-paired strand of the
  generalized normal ruling $\sigma$, in other words,
  $\sigma_0(k)=k$. We can assume that $D(\Lambda)$ has one base point
  corresponding to $t_i$ on strand $i$ at $x=0$ and one base point in
  the loop in the resolution of each right cusp, and no other base
  points. Define
  \[\epsilon(t_i)=\begin{cases}
    (-1)^{N+c-1}x&\text{if }i=k,\\
    -1&\other,
  \end{cases}\] where $c$ is the number of right cusps and $N$ is the
  number of strands at $x=0$.

  Define $\epsilon$ on all crossings as in the proof of ruling to
  augmentation in Theorem \ref{thm:correspSolidTorus}. Note that $t_k$
  does not appear on the boundary of any totally augmented disks and
  so $\epsilon$ is still an augmentation, but now
  \[\epsilon(t_1\cdots t_s)=x\]
  as desired.
\end{proof}

\begin{rmk}
  For any link $\Lambda\subset J^1(S^1)$, one can consider the
  analogous link $\Lambda'\subset S^1\times S^2$. Note that
  $\A(\Lambda)\to\A(\Lambda')$ where the map is inclusion. Thus, any
  augmentation $\epsilon':\Lambda'\to F$ gives an augmentation
  $\epsilon:\Lambda\to F$. As one would expect from
  Theorem~\ref{thm:main} and Theorem~\ref{thm:correspSolidTorus}, it
  is also clear that any normal ruling of $\Lambda'\subset S^1\times
  S^2$ gives a generalized normal ruling of $\Lambda\subset J^1(S^1)$.
\end{rmk}

\section*{Appendix}\label{app:nonvanishing}

The appendix will address Corollary~\ref{cor:nonvanishing} which
follows from
\begin{enumerate}
\item Theorem~\ref{thm:main} over $\rationals$ and
\item the result that if a graded augmentation to the rationals exists
  then the full symplectic homology is
  nonzero. \label{en:nonvanishing}
\end{enumerate}
The second result is known to experts. We will outline the proof here
for completeness. Statement~\ref{en:nonvanishing} is a straight
forward consequence of work of Bourgeois, Ekholm, and Eliashberg
\cite{BEE} and has previously been observed in \cite{LidmanSivek}.

Every connected Weinstein (Stein) $4$-manifold $X$ can be decomposed
into $1$- and $2$-handle attachments to $D^4$ along $\partial
D^4=S^3$. Thus, for each such $4$-manifold there exists a Legendrian
link $\Lambda$ in $\#^k(S^1\times S^2)$, the boundary of the
$4$-manifold, so that attaching $2$-handles along $\Lambda$ to
$\#^k(S^1\times S^2)$ results in $X$.

Using the notation of \cite{BEE}, results of Bourgeois, Ekholm, and
Eliashberg in \cite{BEE} tell us that:

\begin{prop}[\cite{BEE} Corollary 5.7]\label{prop:BEE}
  \[\hsh=\hlhho,\]
  where $\hlhho$ is the homology of the Hochschild complex associated
  to the Chekanov-Eliashberg differential graded algebra over
  $\rationals$.
\end{prop}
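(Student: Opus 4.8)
The plan is to reduce the statement to the Legendrian surgery formula of \cite{BEE}. The handle decomposition recorded above exhibits $X$ as the result of attaching critical Weinstein $2$-handles along $\Lambda$ to the subcritical Weinstein domain $X_0=\natural^k(S^1\times D^3)$ (the boundary connected sum of $k$ copies of $S^1\times D^3$, equivalently $D^4$ with $k$ Weinstein $1$-handles attached), which fills $(\#^k(S^1\times S^2),\xi_{\mathrm{std}})$. First I would recall that the Chekanov--Eliashberg DGA $(\A(\Lambda),\partial)$ over $\rationals$ used throughout this paper is the combinatorial model, due to Ekholm--Ng \cite{Ekholms1s2}, for the Legendrian contact homology DGA of $\Lambda$ inside the contact boundary $\partial X_0$: its generators are the Reeb chords of $\Lambda$ (the crossings of the $xy$-diagram) together with the internal generators $c^p_{ij;\ell}$, which record the Reeb chords running through the $1$-handles and encode the homotopy data of $X_0$.

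Next I would invoke the main computation of \cite{BEE} on the effect of attaching critical handles: the resulting symplectic homology of $X$ is computed, on the algebraic side, from the Chekanov--Eliashberg algebra of the attaching Legendrian link. In the precise form needed here, \cite[Corollary~5.7]{BEE}, this yields an isomorphism between the full symplectic homology $\hsh$ and the homology $\hlhho$ of the Hochschild complex of $(\A(\Lambda),\partial)$ over $\rationals$, i.e. $\hsh\cong\hlhho$. I would record the degree shift by $n=2$ between the symplectic and Hochschild gradings but not dwell on it, and I would note that the transversality and properness hypotheses needed to apply the surgery formula hold here since $X$ is presented in the standard Weinstein handle form on $D^4$.

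The only substantive point — the step I expect to be the main obstacle — is the bookkeeping of conventions. One must check that the DGA over $\rationals$ to which the \cite{BEE} formula is applied is exactly the Ekholm--Ng DGA $\A(\Lambda)$, with the variables $t_i$ retained as group-ring generators recording $H_1$ (rather than specialized), and that ``full'' symplectic homology here means the version sitting in the tautological exact triangle with $H_{*+n}(X)$ — the version computed by the unreduced Hochschild complex, as opposed to the positive part $SH^+$ or a cyclic variant. Granting these identifications, the proposition is a direct transcription of \cite[Corollary~5.7]{BEE}; it is then combined in the rest of the appendix with the separate (and elementary) observation that a graded augmentation to $\rationals$ forces $\hlhho$ to be nonzero, which together with Theorem~\ref{thm:main} over $\rationals$ yields Corollary~\ref{cor:nonvanishing}.
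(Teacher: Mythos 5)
Your proposal is correct and matches the paper's treatment: the paper does not prove this proposition but simply imports it as \cite{BEE}, Corollary 5.7, which is exactly the reduction you carry out (modulo your extra, and reasonable, remarks on matching the Ekholm--Ng DGA and the meaning of ``full'' symplectic homology).
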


Therefore, if the DGA for $\Lambda$ has a graded augmentation to
$\rationals$, then $\hsh$ is nonzero. By Theorem~\ref{thm:main}, we
know that the DGA for $\Lambda$ has a graded augmentation to
$\rationals$ if and only if $\Lambda$ has a graded normal
ruling. Thus, restated from the introduction:

\begin{corNonvanishing}
  If $X$ is the Weinstein $4$-manifold that results from attaching
  $2$-handles along a Legendrian link $\Lambda$ to $\#^k(S^1\times
  S^2)$ and $\Lambda$ has a graded normal ruling, then the full
  symplectic homology $\hsh$ is nonzero.
\end{corNonvanishing}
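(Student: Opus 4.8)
The plan is to chain together the two ingredients already isolated in the Appendix: Theorem~\ref{thm:main}, which converts the graded normal ruling into a graded augmentation over $\rationals$, and Proposition~\ref{prop:BEE}, which identifies $\hsh$ with the Hochschild homology $\hlhho$; the only step that then genuinely requires an argument is that a graded augmentation over $\rationals$ makes $\hlhho$ nonzero.

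First I would observe that $X$, being a connected Weinstein $4$-manifold, is obtained from $D^4$ by attaching $1$- and $2$-handles, so that after all $1$-handles are attached the boundary is $\#^k(S^1\times S^2)$ and the $2$-handles are glued along exactly the Legendrian link $\Lambda$ appearing in the statement. Hence Theorem~\ref{thm:main} applies to $\Lambda$: taking $F=\rationals$ and $\rho=0$, the assumed graded normal ruling of $\Lambda$ produces a graded augmentation $\epsilon\colon\A(\Lambda)\to\rationals$.

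Next I would use $\epsilon$ to show $\hlhho\neq0$. Since $\epsilon$ is a unital algebra map satisfying $\epsilon\circ\partial=0$, it is a morphism of differential graded algebras $(\A(\Lambda),\partial)\to(\rationals,0)$ respecting the $\integers$-grading (this is where the hypothesis $\rho=0$ is used). Hochschild homology is functorial for such morphisms, so $\epsilon$ induces $\epsilon_*\colon\hlhho\to HH_*(\rationals)$, and $HH_*(\rationals)=\rationals$ is concentrated in degree $0$, generated by the class of the unit. Since $\epsilon$ sends the Hochschild $0$-cycle $1\in\A(\Lambda)$ to the unit of $\rationals$, we get $\epsilon_*([1])\neq0$, which forces $[1]\neq0$ in $\hlhho$; in particular $\hlhho\neq0$. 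Proposition~\ref{prop:BEE} then gives $\hsh\cong\hlhho\neq0$, which is Corollary~\ref{cor:nonvanishing}. Alternatively one can build the nonzero class directly, twisting the Hochschild differential of $\A(\Lambda)$ by $\epsilon$ so that the unit becomes a surviving cycle, as in the linearized-homology arguments of \cite{BEE}.

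The step I expect to be the real obstacle is making the Hochschild argument rigorous in the presence of the internal DGA: $\A(\Lambda)$ is only a filtered, infinitely generated DGA, so one must check that the Hochschild complex appearing in Proposition~\ref{prop:BEE} is the appropriately completed one and that $\epsilon$, being supported on finitely many generators in each level of the filtration, induces a genuine grading-preserving map on Hochschild homology. Once the completions, signs, and grading conventions are pinned down the functoriality argument is routine, and the passage from the graded augmentation to the nonvanishing of $\hsh$ is precisely the statement attributed above to Bourgeois--Ekholm--Eliashberg \cite{BEE} and observed in \cite{LidmanSivek}, so the remaining work is bookkeeping rather than new mathematics.
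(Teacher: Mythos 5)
Your overall strategy is the paper's: reduce to Theorem~\ref{thm:main} (ruling $\Rightarrow$ graded augmentation over $\rationals$) and Proposition~\ref{prop:BEE} ($\hsh=\hlhho$), so that the only real content is ``graded augmentation $\Rightarrow$ $\hlhho\neq0$.'' Where you diverge is in how you establish that last implication, and this is also where your argument has a soft spot. You treat $\hlhho$ as the classical Hochschild homology of the DGA $(\A(\Lambda),\partial)$ and invoke functoriality of $HH_*$ for the DGA morphism $\epsilon\colon\A(\Lambda)\to\rationals$ together with $HH_*(\rationals)=\rationals$. But the complex appearing in Proposition~\ref{prop:BEE} is Bourgeois--Ekholm--Eliashberg's specific complex $\clhho=\ccheck\oplus\rationals\oplus\chat$, built from decorated words of Reeb chords in $\clho\subset\clha$ with the differential $d_{H_0}$ recorded in the Appendix; it is not the bar-type Hochschild complex of $\A(\Lambda)$, so ``functoriality of Hochschild homology'' does not apply verbatim, and your worry about completions/filtrations is aimed at the wrong issue. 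The paper's fix is more concrete than your alternative suggestion: it defines the linear functional $\epsilon(\check{w},n,\hat{v})=\epsilon'(w)+n$ directly on $\clhho$ and verifies by hand, using the explicit formula for $d_{H_0}$ (the check/hat terms coming from $d_{LHO^+}$ and the $\check{c}_1c_2\cdots c_\ell-c_1\cdots c_{\ell-1}\check{c}_\ell$ correction cancel after applying $\epsilon'$), that it annihilates the image of $d_{H_0}$ and hence descends to a nonzero functional on $\hlhho$, detected on the summand $\rationals$ exactly as your ``class of the unit'' argument intends. So your idea is the right one, but to make it rigorous you should replace the appeal to $HH_*(\rationals)$ by this explicit chain-level computation on BEE's complex. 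One further loose end that both you and the paper leave implicit: for genuine links the construction requires the augmentation to be \emph{pure} (supported on crossings of a single component at a time), a condition the paper only remarks on at the end of the Appendix; it is worth checking that the augmentation produced from a ruling by Theorem~\ref{thm:main} can be taken pure before applying the Hochschild argument componentwise.
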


For completeness, we give an outline of the proof of
statement~\ref{en:nonvanishing}. Recall that full symplectic homology
is a symplectic invariant of Weinstein $4$-manifolds which coincides
with the Floer-Hofer symplectic homology.

We will show that given a graded augmentation $\epsilon'$ of the
Chekanov-Eliashberg DGA over $\integers[t,t^{-1}]$ of a Legendrian
knot $\Lambda$ to $\rationals$, one can define a graded augmentation
$\epsilon:\clhho\to\rationals$, where the homology of $\clhho$ is
$\hlhho$. Recall that elements of
$\clhho=\ccheck\oplus\rationals\oplus\chat$ are of the form
$(\check{w},n,\hat{v})$ for some $w,v\in\clho\subset\clha$ and
$n\in\rationals$. Define
\begin{align*}
  \epsilon:\clhho=\ccheck\oplus\rationals\oplus\chat\to\rationals\\
  \epsilon(\check{w},n,\hat{v})=\epsilon'(w)+n
\end{align*}

Let us check that this gives an augmentation. Recall
\[d_{H_0}(\check{w},n,\hat{v})=(\check{d}_{LHO^+}\check{w}+d_{MH_0^+}\hat{v},n,\hat{d}_{LHO^+}\hat{v})=\left(\sum_{j=1}^r\check{w}_j+\check{c}_1c_2\cdots
  c_\ell-c_1\cdots
  c_{\ell-1}\check{c}_\ell,n,\hat{d}_{LHO^+}\hat{v}\right)\] if
$d_{LHO^+}(w)=\sum_{j=1}^rw_j$ and $v=c_1\ldots c_\ell$. Thus, 
\begin{align*}
  \epsilon(d_{H_0}(\check{w},n,\hat{v}))&=\epsilon'\left(\sum_{j=1}^rw_j\right)+\epsilon'(c_1\cdots c_\ell)-\epsilon'(c_1\cdots c_\ell)+n\\
  &=\epsilon'\left(\sum_{j=1}^rw_j+n\right)\\
  &=\epsilon'(d_{LHO}w)=0
\end{align*}
since $\epsilon'$ is an augmentation of $\clha$, $\clho\subset\clha$,
and $d_{LHO}=d_{LHA}\vert_{LHO}$.

One can show that this construction also works if $\epsilon'$ is a pure
augmentation of a link
$\Lambda=\Lambda_1\coprod\cdots\coprod\Lambda_N$, where an
augmentation is {\bf pure} if when a crossing $c$ is augmented, then there
exists $1\leq i\leq N$ such that $c$ is a crossing of $\Lambda_i$.

\bibliographystyle{plain}
\bibliography{S1S2AugmentationsAndRulings}{}

\end{document}